\numberwithin{equation}{section}
\newcommand{\kommentar}[1]{}
\newcommand{\R}{\mathbb{R}}
\newcommand{\N}{\mathbb{N}}
\newcommand{\Z}{\mathbb{Z}}
\newtheorem{thm}{Theorem}[section]
\newtheorem{coro}[thm]{Corollary}
\newtheorem{lem}[thm]{Lemma}
\newtheorem{rem}[thm]{Remark}
\newcommand{\dd}{\;\mathrm{d}}
\newcommand{\balp}{\bm{\alpha}}
\newcommand{\cE}{\mathcal{E}}
\newcommand{\cM}{\mathcal{M}}
\newcommand{\bh}{\mathbf{h}}
\newcommand{\fM}{\mathfrak{M}}
\newcommand{\fS}{\mathfrak{S}_{f,s}}
\newcommand{\fm}{\mathfrak{m}}
\newcommand{\cA}{\mathcal{A}}
\newcommand{\cB}{\mathcal{B}}
\theoremstyle{remark}
\title{On the order of  4-dimensional regular polytope numbers}
\author{Anji Dong, The Nguyen, Alexandru Zaharescu}
\address{
Anji Dong: Department of Mathematics,
University of Illinois Urbana-Champaign,
Altgeld Hall, 1409 W. Green Street,
Urbana, IL, 61801, USA}
\email{anjid2@illinois.edu}
\address{
The Nguyen: Department of Mathematics,
University of Illinois Urbana-Champaign,
Altgeld Hall, 1409 W. Green Street,
Urbana, IL, 61801, USA}
\email{thevn2@illinois.edu}
\address{
Alexandru Zaharescu: Department of Mathematics,
University of Illinois Urbana-Champaign,
Altgeld Hall, 1409 W. Green Street,
Urbana, IL, 61801, USA and Simion Stoilow Institute of Mathematics of the Romanian Academy, 
P. O. Box 1-764, RO-014700 Bucharest, Romania}
\email{zaharesc@illinois.edu}
\begin{document}

\setcounter{tocdepth}{1}
\keywords{regular 4-polytopes, exponential sums, Hardy-Littlewood method}
\subjclass{Primary: 11P05. Secondary: 11P55, 11L07, 11L15, 05A16}
\begin{abstract}
    In light of Kim's conjecture on regular polytopes of dimension four, which is a generalization of Waring's problem, we establish asymptotic formulas for representing any sufficiently large integer as a sum of numbers in the form of those regular 4-polytopes. Moreover, we are able to obtain a more general result of the asymptotics for any degree-four polynomial $f$ satisfying $f(0)=0$ and $f(1)=1$.
\end{abstract}
\maketitle

\section{Introduction}
In 1770, Lagrange showed that every non-negative integer can be written as a sum of four squares. In the same year, Waring generalized the question to higher degrees, known as Waring's problem, which asks whether for each $k\in\N$, there exists a natural number $s$ such that every positive integer $m$ is the sum of at most $s$ natural numbers raised to the power $k$. In 1909, Hilbert \cite{hilbert1909} showed that such $s$ exists for every $k\in\N$. Define $g(k)$ to be the least such number $s$ having the above property. Wieferich \cite{Wieferich1908BeweisDS} and Kempner \cite{Kempner1912BemerkungenZW} showed that $g(3)=9$. Balasubramanian \cite{balasubramanian}, Deshouillers and Dress \cite{deshouillers} proved that $g(4)=19$. Chen \cite{chen1964} confirmed that $g(5)=37$ and Pillai \cite{pillai1940} proved that $g(6)=73$. Along this direction, results are generalized to regular polytopes of dimension $k$. We first introduce a theorem on classification of regular polytopes, see Coxeter \cite{coxeter}. 
\begin{thm}[Schl\"afli]
    The only possible Schl\"afli symbols for a regular polytope in the Euclidean space in $\R^d$ are given by the following list:
    \begin{enumerate}
        \item[] $d=2: \{n\}$, where $n$ is an arbitrary integer;
        \item[] $d=3: \{3,3\},\{3,4\},\{4,3\},\{3,5\},\{5,3\}$
        \item[] $d=4: \{3,3,3\},\{3,3,4\},\{4,3,3\},\{3,4,3\},\{3,3,5\},\{5,3,3\}$
        \item[] $d\geq 5: \{3^{d-1}\},\{3^{d-2},4\},\{4,3^{d-2}\}.$
    \end{enumerate}
    Note that for any $d\geq 3$, a $d$-th power is $\{4,3^{d-2}\}$.
\end{thm}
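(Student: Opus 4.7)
The plan is induction on the dimension $d$, using the fact that a regular $d$-polytope with Schl\"afli symbol $\{p_1,\ldots,p_{d-1}\}$ has cells $\{p_1,\ldots,p_{d-2}\}$ and vertex figures $\{p_2,\ldots,p_{d-1}\}$ that are themselves regular $(d-1)$-polytopes, combined with a local geometric closure condition around each $(d-3)$-face. The base case $d=2$ is immediate (every integer $n\geq 3$ yields a regular $n$-gon), and for $d=3$ the convexity of the spherical vertex figure forces $q\cdot(p-2)\pi/p<2\pi$, i.e.\ $(p-2)(q-2)<4$ with $p,q\geq 3$, producing exactly the five listed pairs $(3,3),(3,4),(4,3),(3,5),(5,3)$.

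\textbf{Inductive step.} Assume the classification holds in dimensions less than $d$. For a candidate symbol $\{p_1,\ldots,p_{d-1}\}$, the cell and vertex-figure conditions restrict the $(d-1)$-tuples $(p_1,\ldots,p_{d-2})$ and $(p_2,\ldots,p_{d-1})$ to the list produced by induction. In addition, exactly $p_{d-1}$ copies of the cell must fit cyclically around each $(d-3)$-face, forcing the dihedral angle of $\{p_1,\ldots,p_{d-2}\}$ to be strictly less than $2\pi/p_{d-1}$. The standard spherical-trigonometric recursion for this dihedral angle (equivalently, positive definiteness of the Schl\"afli matrix of the Coxeter diagram $p_1{-}p_2{-}\cdots{-}p_{d-1}$) makes this condition explicit; for $d=4$ it reduces to the single inequality
\[
\cos\!\left(\frac{\pi}{p_2}\right) < \sin\!\left(\frac{\pi}{p_1}\right)\sin\!\left(\frac{\pi}{p_3}\right).
\]

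\textbf{Case analysis and main obstacle.} For $d=4$ one enumerates all ways to match a Platonic cell $\{p_1,p_2\}$ with a Platonic vertex figure $\{p_2,p_3\}$ sharing the middle entry, then applies the displayed inequality; this leaves exactly the six symbols stated. For $d\geq 5$, induction together with the dihedral inequality applied to each triple of consecutive entries rules out $p_i=5$ and also $(p_i,p_{i+1})=(4,4)$, so with $p_1,p_{d-1}\geq 3$ only the three symbols $\{3^{d-1}\}$, $\{3^{d-2},4\}$, $\{4,3^{d-2}\}$ survive. The technical heart of the argument is the dihedral-angle inequality and its inductive propagation; handling it uniformly requires either a direct spherical-trigonometric recursion for the dihedral angle of $\{p_1,\ldots,p_{d-2}\}$ or the equivalent determinantal criterion coming from the Coxeter-Dynkin Gram matrix. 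Once that tool is established, the remaining case work is purely combinatorial.
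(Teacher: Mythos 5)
The paper itself does not prove Schl\"afli's theorem; it is cited as a classical result with a reference to Coxeter, so there is no internal proof to compare against. I will therefore assess your proposal on its own.

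Your overall architecture --- induction on $d$ using the fact that the cell $\{p_1,\ldots,p_{d-2}\}$ and vertex figure $\{p_2,\ldots,p_{d-1}\}$ of $\{p_1,\ldots,p_{d-1}\}$ must themselves be regular $(d-1)$-polytopes, plus a closure condition that is equivalent to positive definiteness of the Schl\"afli (Gram) matrix --- is the standard and correct route, and your handling of $d=2,3,4$ is right. In particular the inequality $\cos(\pi/p_2)<\sin(\pi/p_1)\sin(\pi/p_3)$ is exactly the $4$-dimensional criterion and correctly selects the six listed symbols.

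There is, however, a genuine gap in the $d\ge 5$ step. You assert that ``the dihedral inequality applied to each triple of consecutive entries rules out $p_i=5$ and also $(p_i,p_{i+1})=(4,4)$,'' but the condition ``every consecutive triple is an admissible $4$-polytope'' is only \emph{necessary}, not sufficient, and it does not rule out $p_i=5$. Concretely, at $d=5$ the symbols $\{3,3,3,5\}$, $\{5,3,3,3\}$, $\{3,3,4,3\}$, and $\{3,4,3,3\}$ each have the property that every consecutive triple ($(3,3,3)$, $(3,3,5)$, $(3,3,4)$, $(3,4,3)$, $(4,3,3)$) names a valid regular $4$-polytope, yet none of these is a regular $5$-polytope --- $\{3,3,4,3\}$ is the Euclidean $16$-cell honeycomb, and the others are degenerate or hyperbolic. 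Similarly, for $d\ge 6$ the candidate $\{4,3^{d-3},4\}$ (the honeycomb of $(d-1)$-cubes) survives your local filter and must be excluded. To close the argument you must actually invoke the \emph{global} determinantal/positive-definiteness criterion --- which you mention as an available tool but never deploy --- or equivalently use the recursion for the circumradius-to-edge ratio $\rho_{d-1}$ of $\{p_2,\ldots,p_{d-1}\}$ and insist $\rho_{d-1}<1$. Once you compute $\rho$ for the six $4$-polytopes (noting $\rho=1$ exactly for $\{3,4,3\}$ and $\{3,3,5\}$, which is what kills $\{3,3,4,3\}$, $\{3,4,3,3\}$, $\{3,3,3,5\}$, $\{5,3,3,3\}$) and for $\{4,3^{d-3},4\}$, the enumeration becomes a clean induction that produces exactly $\{3^{d-1}\}$, $\{3^{d-2},4\}$, $\{4,3^{d-2}\}$ for $d\ge 5$.
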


Let $\beta_f$ denote the Schl\"afli symbol corresponds to a regular polytope represented by the polynomial $f$. We extend the definition of $g(k)$ by defining $g(\beta_f)$ to be the least integer $s$ such that every integer is a sum of at most $s$ numbers represented by $f$. In the 1850s, Sir Frederick Pollock \cite{pollock} made conjectures on regular polytopes of dimension three. More precisely, in the language of Schl\"afli symbols, he conjectured that $g(\{3,3\})=5, g(\{3,4\})=7, g(\{4,3\})=9, g(\{3,5\})=13$, and $g(\{5,3\})=21$. The cube case $g(\{4,3\})=9$ was mentioned above, and more recently, Basak, Saettone, and two of the authors \cite{dong2024pollock} corrected and proved the conjectures of   and dodecahedral numbers, which correpsond to  $g(\{3,5\})$ and $g(\{5,3\})$. They showed that $g(\{3,5\})=15$ and $g(\{5,3\})=22$. For regular polytopes of higher dimensions, through numerical data, Kim \cite{kim2002} made conjectures on dimension four to seven. In particular, for dimension four, he conjectured that $g(\{3,3,3\})=8,g(\{3,3,4\})=11,g(\{4,3,3\})=19,g(\{3,4,3\})=28,g(\{3,3,5\})=125$, and $g(\{5,3,3\})=606$.

Yet another direction, people consider the minimal integer $s$ such that a sufficiently large integer $m$ can be written as at most $s$ $k$-th powers of natural numbers. Denote such number $s$ by $G(k)$, called the order of $k$-th powers. Linnik \cite{linnik1943} showed that $G(3)\leq 7$. Davenport \cite{davenport1939} proved $G(4)=16$, Vaughan and Wooley \cite{vaughanwooley1994, vaughanwooley1995} established that $G(5)\leq 17$ and $G(6)\leq 24$. More recently, Wooley \cite{wooley2016} confirmed that $G(7)\leq 31, G(8)\leq 39$, and $G(9)\leq 47$. In 2023, Br\"udern and Wooley \cite{brudernwooley2022} showed that for all natural numbers $k$, $G(k)\leq \lceil k(\log k+4.20032)\rceil$. For any $k,s\in\N$, let $\mathcal{R}_{k,s}(m)$ denote the number of ways of writing $m$ as a sum of $s$ $k$-th powers. Hardy, Littlewood, and Ramanujan \cite{ littlewood1920, hardy1918} showed that for $k\geq 3$ and $s> 2^k$ and for sufficiently large $m$,
\begin{align}\label{asymptotics of m as kth powers}
\mathcal{R}_{k,s}(m)\asymp \frac{\Gamma(1+1/k)^s}{\Gamma(s/k)}\mathfrak{S}_{k,s}(m) m^{s/k-1},
\end{align}
where $\mathfrak{S}_{k,s}(m)>0$.

We generalize the notion of $G(k)$ by defining $G(\beta_f)$ to be the least integer $s$ such that every sufficiently large integer is a sum of at most $s$ polytope numbers represented by $f$. For three-dimensional polytopes, Brady \cite{Brady_2015} proved that all integers larger than $e^{10^7}$ can be written as a sum of at most seven numbers represented by the Schl\"afli symbol $\{3,4\}$, so $G(\{3,4\})\leq 7$. More recently, Basak, Saettone, and two of the authors \cite{dong2024pollock} showed that $G(\{3,5\})\leq 9$ and $G(\{5,3\})\leq 9$.

In this paper, we are interested in regular polytopes of dimension four, in particular, those three with Schl\"afli symbols $\{3,4,3\}, \{3,3,5\}$ and $\{5,3,3\}$. To be precise, for any $n\in\N$, the corresponding $n$-th polytope numbers are in order defined as
\begin{align*}
 f_1(n)= n^2(3n^2-4n+2), \quad f_2(n) = \frac{n}{6}(145n^3-280n^2+179n-38),   
\end{align*}
and
\begin{align}\label{target polynomials}
    f_3(n) = \frac{n}{2}(261n^3-504n^2+283n-38).
\end{align}

For $i=1,2,3$, denote the set of numbers represented by $f_i$ as $\mathcal{F}_i$, i.e., 
\[
\mathcal{F}_i = \{f_i(n): n\in\N\cup \{0\}\}.
\]
Our goal is to obtain asymptotic formulas, similar to \eqref{asymptotics of m as kth powers}, by employing the Hardy--Littlewood method for the number of ways a sufficiently large positive integer $m$ can be written as a sum of numbers from $\mathcal{F}_i$, with explicit power-saving error terms. Our main result is as follows. \begin{thm}\label{thm: representations}
    
For $m \in \mathbb{N}$ and $i=1,2,3$, let $\mathcal{R}_{f_i,17}(m)$ denote the number of ways to represent $m$ as the sum of $17$ numbers from the set $\mathcal{F}_i$. Then for any $m>e^{e^{471}}$, we have 
\begin{align}
\bigg|\mathcal{R}_{f_i,17}(m) -\bigg(\frac{24}{A_i}\bigg)^{17/4}\mathfrak{S}_{f_i,17}(m)\Gamma\bigg(\frac{5}{4}\bigg)^{17}\Gamma\bigg(\frac{17}{4}\bigg)^{-1}m^{13/4}\bigg| \leq \num{1.6e42}\cdot m^{\frac{13}{4}-\frac{7}{1488}},
\end{align}
where $A_1 = 72, A_2=580, A_3=3132$, $\Gamma$ is the Gamma function and all $\mathfrak{S}_{f_i,17}(m)$ uniformly satisfy the inequality \eqref{Sigma Bound} with $s=17$ below.
\end{thm}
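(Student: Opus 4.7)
The plan is to apply the Hardy-Littlewood circle method to each polynomial $f_i$ in turn, exploiting the fact that $s = 17 > 2^4 = 16$ is exactly the classical threshold at which Hua's lemma combined with a Weyl-type supremum bound yields an honest asymptotic formula for sums of 17 quartic polynomial values. Set $c_i \in \{3,\, 145/6,\, 261/2\}$ to be the leading coefficient of $f_i$, so that $A_i = 24 c_i$, and put $P := (m/c_i)^{1/4}$. Form the exponential sum
\begin{equation*}
F_i(\alpha) := \sum_{1 \leq n \leq P} e(\alpha f_i(n)), \qquad \mathcal{R}_{f_i,17}(m) = \int_0^1 F_i(\alpha)^{17} e(-\alpha m)\, d\alpha,
\end{equation*}
where the second identity is orthogonality. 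Dissect $[0,1)$ as $\fM \cup \fm$, with the major arcs $\fM$ being narrow neighborhoods of rationals $a/q$ with $(a,q)=1$ of denominator $q \leq Q$, where $Q = P^\theta$ with $\theta$ to be chosen of size roughly $14/93$.

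On the major arcs, I would substitute $\alpha = a/q + \beta$ and approximate $F_i(\alpha)$ by $q^{-1} S_i(q,a) I_i(\beta)$, where $S_i(q,a) := \sum_{r=1}^q e(a f_i(r)/q)$ and $I_i(\beta) := \int_0^P e(\beta f_i(x))\, dx$. Raising to the 17th power, integrating over $\fM$, and performing the change of variable $\beta \mapsto \beta/m$ in $I_i$ produces the singular series $\mathfrak{S}_{f_i,17}(m)$ and the singular integral, which together yield the main term with prefactor $(24/A_i)^{17/4} = c_i^{-17/4}$, where the power of $c_i^{-1}$ is the Jacobian of the change of variable. The positivity hypothesis \eqref{Sigma Bound} on $\mathfrak{S}_{f_i,17}(m)$ then guarantees the main term genuinely dominates for $m$ large.

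On the minor arcs, I would combine a Weyl-type supremum bound with Hua's lemma. Three iterations of Weyl differencing reduce $|F_i(\alpha)|^8$ to sums essentially of the form $\sum_{h_1,h_2,h_3}\sum_n e(24 c_i h_1 h_2 h_3 \alpha n + \text{lower})$, whose oscillation is controlled by the Diophantine quality of $\alpha$ and yields
\begin{equation*}
\sup_{\alpha \in \fm} |F_i(\alpha)| \leq C_1 \, P^{1 - 7/372}
\end{equation*}
for an explicit constant $C_1$. An effective form of Hua's lemma gives $\int_0^1 |F_i(\alpha)|^{16}\, d\alpha \leq C_2 \, P^{12+\epsilon}$, so that
\begin{equation*}
\int_{\fm} |F_i(\alpha)|^{17}\, d\alpha \leq \Big(\sup_{\alpha \in \fm}|F_i(\alpha)|\Big)\int_0^1 |F_i(\alpha)|^{16}\, d\alpha \leq C_1 C_2 \, P^{13 - 7/372} \ll m^{13/4 - 7/1488},
\end{equation*}
matching the stated error term precisely.

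The principal obstacle will be making every constant fully explicit. Specifically: (i) the Weyl-type supremum bound must be adapted to the specific polynomials $f_i$, not the monomial $n^4$, with careful bookkeeping through the three differencings, since the lower-order coefficients of $f_i$ influence the constants produced at each stage; (ii) an explicit polynomial-specific form of Hua's lemma must be established; and (iii) the singular series $\mathfrak{S}_{f_i,17}(m)$ must be shown to converge absolutely with an explicit uniform positive lower bound---the content of the referenced \eqref{Sigma Bound}, which will require $p$-adic analysis at each prime together with a global estimate on the tail. The enormous threshold $m > e^{e^{471}}$ is precisely the cost of making every estimate effective: it must be large enough to absorb the accumulated $P^\epsilon$ factors and polylogarithmic losses from the dyadic decompositions used throughout the argument.
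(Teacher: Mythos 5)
Your overall framework — circle method, Weyl differencing three times to control $|F_i(\alpha)|^8$, an explicit Hua-type bound for $\int_0^1|F_i|^{16}$, a major-arc approximation yielding singular series times singular integral, and $p$-adic analysis for positivity of $\mathfrak{S}_{f_i,17}(m)$ — is exactly the paper's architecture. However, there is a genuine gap in where you think the power-saving error term comes from, and your stated choice of arc parameter will not deliver the bound.

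You choose $\theta \approx 14/93 = 56/372$, back-solved so that the minor-arc supremum saving $P^{-\theta/8} = P^{-7/372}$, combined with Hua's $P^{12+\epsilon}$, produces exactly $m^{13/4-7/1488}$; you then assert this ``matches the stated error term precisely.'' But you never track the error incurred by truncating the singular series at level $Q=P^\theta$. For $s=17$ the completed series decays like $q^{1-9s/73}=q^{-80/73}$, so the tail beyond $Q$ contributes an error of order $m^{13/4}\cdot P^{-(9s/73-2)\theta}=m^{13/4-7\theta/292}$. With your $\theta = 56/372$ this is $m^{13/4-392/108624}$, which is \emph{larger} than $m^{13/4-7/1488}=m^{13/4-511/108624}$, so the singular-series tail alone already breaks your claimed bound. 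The paper instead chooses $\delta = 73/372$ (Lemma \ref{lem::optimal delta}), which is the value balancing the major-arc approximation error $m^{3+5\delta/4}$ against the singular-series tail $m^{13/4-7\delta/292}$; at that balance point both equal $m^{13/4-7/1488}$, while the minor-arc contribution is strictly smaller (giving saving $\delta/32 = 73/11904 > 7/1488$, modulo the $\log\log$ correction). In fact the coefficient $\num{1.6e42}$ in the statement essentially comes from the singular-series tail term $\frac{230^s\cdot 730}{9s-146}$ at $s=17$, not from the minor arcs. So your plan must be modified: increase $\theta$ to $73/372$, carry the singular-series completion error explicitly (Lemma \ref{lem::Singular Series Extension}), and carry the error in replacing $S_f(\alpha)$ by $\frac{V(q,a)}{24q}\int e(At^4\theta/24)\,dt$ on the major arcs (Lemmas \ref{Approx 2}--\ref{Approx 4}); these, not the minor arcs, produce the stated exponent and constant.

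A smaller but real issue: you define the local factor as $S_i(q,a)=\sum_{r=1}^q e(af_i(r)/q)$, implicitly treating $f_i(n)\bmod q$ as $q$-periodic in $n$. It is not, because $f_i$ has denominator $24$; the correct period is $24q$ (Lemma \ref{Congruence}), and the local sum must be $\sum_{n=1}^{24q}e(af_i(n)/q)$ normalized by $(24q)^{-1}$, as in the paper's $V(q,a)$. Writing $n=qk+r$ does not give $f_i(n)\equiv f_i(r)\pmod q$, so your stated major-arc approximation $F_i\approx q^{-1}S_i(q,a)I_i(\beta)$ would fail without this correction, and the multiplicativity argument for the singular series must be run with the $24q$-periodic sums.
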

In Section \ref{sec: Proof of Main Theorem}, we establish a more general result by considering sums of $s$ polytope numbers from each $\mathcal{F}_i$ for any $s\geq 17$. 

An immediate corollary on the order of $f_i$ for $i=1,2,3$ is the following.
\begin{coro}
 $G(\{3,4,3\}), G(\{3,3,5\})$, and $G(\{5,3,3\})$ are at most $17$.
\end{coro}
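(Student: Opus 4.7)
The plan is to execute the Hardy--Littlewood circle method in its classical form, applied separately but uniformly to each of the three polynomials $f_i$, with every constant tracked explicitly so as to reach the numerical bound and the power-saving exponent $7/1488$ in the statement. For each $i$, set $P_i = \lfloor (24m/A_i)^{1/4}\rfloor$, so that $f_i(n) \le m$ for $0 \le n \le P_i$, using that $A_i/24$ is the leading coefficient of $f_i$. Form the exponential sum
\begin{equation*}
T_i(\alpha) = \sum_{n=0}^{P_i} e(\alpha f_i(n)), \qquad e(x) := \exp(2\pi \ii x),
\end{equation*}
and use orthogonality to write
\begin{equation*}
\mathcal{R}_{f_i,17}(m) = \int_0^1 T_i(\alpha)^{17}\, e(-\alpha m)\,d\alpha.
\end{equation*}

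Next, perform a Farey-type dissection with a parameter $Q_0 = P_i^{\nu}$, writing $[0,1) = \mathfrak{M} \cup \mathfrak{m}$, where $\mathfrak{M}$ is the union of the short arcs $|\alpha - a/q| \le Q_0 P_i^{-4}$ for $(a,q)=1$, $q \le Q_0$, and $\mathfrak{m}$ is the complement. On each major arc use the standard approximation
\begin{equation*}
T_i(a/q+\beta) = q^{-1} S_i(q,a)\, v_i(\beta) + O\bigl(q + q|\beta|f_i(P_i)\bigr),
\end{equation*}
with $S_i(q,a) = \sum_{r=1}^q e(a f_i(r)/q)$ and $v_i(\beta) = \int_0^{P_i} e(\beta f_i(t))\,dt$. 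Expanding $T_i^{17}$, integrating against $e(-\alpha m)$ and summing over $(q,a)$ isolates the singular series
\begin{equation*}
\mathfrak{S}_{f_i,17}(m) = \sum_{q=1}^{\infty} q^{-17}\!\!\sum_{\substack{1\le a\le q\\(a,q)=1}}\!\! S_i(q,a)^{17}\, e(-am/q),
\end{equation*}
times the singular integral, which under the substitution $u = (A_i/24)^{1/4} t$ produces $(24/A_i)^{17/4}\Gamma(5/4)^{17}\Gamma(17/4)^{-1} m^{13/4}(1+o(1))$. The Weyl bound $|S_i(q,a)| \ll q^{1-1/4+\epsilon}$ on complete quartic sums, uniform in $i$, ensures absolute convergence of $\mathfrak{S}_{f_i,17}(m)$ and underlies the uniform lower bound recorded in \eqref{Sigma Bound}.

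On the minor arcs, Weyl's inequality for degree-four polynomials with leading coefficient $A_i/24$ yields $|T_i(\alpha)| \le C_i\, P_i^{1-\nu/8+\epsilon}$ on $\mathfrak{m}$, and Hua's lemma gives $\int_0^1 |T_i(\alpha)|^{16}\,d\alpha \ll P_i^{12+\epsilon}$. Combining via
\begin{equation*}
\int_{\mathfrak{m}} |T_i(\alpha)|^{17}\,d\alpha \le \Bigl(\sup_{\alpha\in\mathfrak{m}} |T_i(\alpha)|\Bigr)\int_0^1 |T_i(\alpha)|^{16}\,d\alpha \ll P_i^{13-\nu/8+\epsilon},
\end{equation*}
gives a saving of $P_i^{\nu/8} = m^{\nu/32}$ over the main term $m^{13/4}$. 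A careful optimisation of $\nu$ against the competing constraints from the major-arc error (the $O(q+q|\beta|f_i(P_i))$ approximation error, together with the $Q_0$-dependence of the truncation of the singular series and singular integral) fixes $\nu = 14/93$, yielding the exponent $\nu/32 = 7/1488$ in the statement.

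The main obstacle is not the overall architecture, which is classical, but the explicit bookkeeping required to reach the constant $1.6\times 10^{42}$ and the threshold $m > e^{e^{471}}$. Three quantitative tasks must be carried out uniformly in $i \in \{1,2,3\}$: an explicit Weyl-type inequality for the specific quartic polynomials $f_i$ (with their non-trivial lower-order coefficients that spoil exact symmetry), explicit control of the major-arc approximation error tracking the dependence on $q$, $\beta$ and $P_i$, and a genuinely effective positive lower bound for $\mathfrak{S}_{f_i,17}(m)$, requiring analysis of local $p$-adic densities of $f_i(x_1)+\cdots+f_i(x_{17})\equiv m\pmod{p^k}$ prime by prime, with small primes handled by direct Hensel lifting and large primes via Weil-type estimates. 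The accumulation of constants across these three steps, together with the need to absorb all $O$-terms into the stated bound uniformly in $i$, forces the double-exponential threshold for $m$.
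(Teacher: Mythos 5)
The paper's own proof of this corollary is a one-liner: it follows immediately from Theorem~\ref{thm: representations}. Since the main term $\bigl(\tfrac{24}{A_i}\bigr)^{17/4}\mathfrak{S}_{f_i,17}(m)\Gamma\bigl(\tfrac{5}{4}\bigr)^{17}\Gamma\bigl(\tfrac{17}{4}\bigr)^{-1}m^{13/4}$ is strictly positive (the uniform lower bound on $\mathfrak{S}_{f_i,17}(m)$ in Lemma~\ref{lower bound for S(m)}) and eventually dominates the explicit error $\num{1.6e42}\,m^{13/4-7/1488}$, one has $\mathcal{R}_{f_i,17}(m)>0$ for all $m>e^{e^{471}}$, which is exactly the assertion $G(\beta_{f_i})\leq 17$. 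You instead reconstruct the full circle-method proof of the theorem itself; the architecture you describe is indeed the one used in Sections~\ref{sec: initial set-up}--\ref{sec: Proof of Main Theorem}, but that is re-proving the theorem rather than deducing the corollary.

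Two quantitative points in your sketch do not survive comparison with the paper. First, your Farey parameter $\nu=14/93$ is not the value the paper uses: Lemma~\ref{lem::optimal delta} gives $\delta=\frac{73}{219+9s}=\frac{73}{372}$ at $s=17$. Second, you ascribe the power saving $7/1488$ entirely to the minor-arc Weyl/Hua estimate (the factor $m^{\nu/32}$). In the paper, the binding constraint is instead the major-arc approximation error $N^{5\delta-1}$ balanced against the singular-series truncation error $N^{(2-9s/73)\delta}$; equating these gives $\delta=73/372$, and the saving $N^{-7/372}=m^{-7/1488}$. With your $\nu=14/93$, the singular-series truncation term decays only like $m^{-7\nu/292}\approx m^{-0.0036}$, which is worse than $m^{-7/1488}\approx m^{-0.0047}$, so your choice would not reach the exponent you claim. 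Finally, the corollary still requires $\mathfrak{S}_{f_i,17}(m)>0$; you mention the local densities and Hensel lifting, which is the correct mechanism (Lemma~\ref{lem:M-star-p}, Corollary~\ref{coro:lower-bound-M_m(p^k)}, Lemma~\ref{lem:lower-bound-T_m(p)}), but a complete argument must verify it rather than merely invoke it.
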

In fact, we can establish a general result for any degree-four polynomial $f$ satisfying $f(0)=0$ and $f(1)=1$. It's obvious that any such polynomial can be represented by 
\begin{align} f(n) = A \binom{n}{4} + B \binom{n}{3} + C \binom{n}{2} + n\label{defn: f(n)}
\end{align}
for some $A,B,C \in \Z$ such that $A\geq  1$ and $n\in\N$. We will assume that the coefficients are not too large. More precisely, the absolute values of the coefficients are less than $e^{e^{10}}$. Although this assumption is not strictly necessary, we use it in order to obtain more effective bounds for Theorem \ref{thm: representations}. The result is as follows.
\begin{thm}\label{thm: representations general for deg 4}
    For $m \in \mathbb{N}$ and a degree-four polynomial $f$ represented by \eqref{defn: f(n)}, with $A,|B|,|C|\leq e^{e^{10}}$, let $\mathcal{R}_{f,17}(m)$ denote the number of ways to write $m$ as the sum of $17$ numbers represented by $f$. Then, there exists an explicitly computable integer $m_f$ such that for any $m\geq m_f$, we have 
    \begin{align}
\bigg|\mathcal{R}_{f,17}(m) -\bigg(\frac{24}{A}\bigg)^{17/4}\mathfrak{S}_{f,17}(m)\Gamma\bigg(\frac{5}{4}\bigg)^{17}\Gamma\bigg(\frac{17}{4}\bigg)^{-1}m^{13/4}\bigg| \leq \num{1.6e42}\cdot m^{\frac{13}{4}-\frac{7}{1488}},
\end{align}
where $\Gamma$ is the Gamma function and $\mathfrak{S}_{f,17}(m)>0$.
\end{thm}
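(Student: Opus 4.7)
The plan is to run the Hardy--Littlewood circle method in essentially the same manner as in Section~\ref{sec: Proof of Main Theorem}, with the three specific polytope polynomials $f_1, f_2, f_3$ replaced by a generic $f(n)=A\binom{n}{4}+B\binom{n}{3}+C\binom{n}{2}+n$. Since the leading coefficient of $f$ is $A/24$, I would set $P=\lfloor(24m/A)^{1/4}\rfloor$ and $F(\alpha)=\sum_{1\le n\le P}e(\alpha f(n))$, so that
$$\mathcal{R}_{f,17}(m)=\int_0^1 F(\alpha)^{17}e(-\alpha m)\,d\alpha.$$
I would then dissect the unit interval into major arcs $\mathfrak{M}$ (denominators $q\le Q$, with $Q$ chosen exactly as in Theorem~\ref{thm: representations}) and minor arcs $\mathfrak{m}=[0,1]\setminus\mathfrak{M}$.

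On the major arcs, I would approximate $F(a/q+\beta)$ by $q^{-1}S_f(q,a)\,v(\beta)$, where $S_f(q,a)=\sum_{r=1}^{q} e_{q}(af(r))$ is a complete exponential sum and $v(\beta)=\int_0^P e(\beta f(t))\,dt$. The substitution $u=f(t)$, together with the asymptotic $(f^{-1})'(u)\sim \tfrac{1}{4}(24/A)^{1/4}u^{-3/4}$, converts the integral $\int v(\beta)^{17}e(-\beta m)\,d\beta$ into the expected factor $(24/A)^{17/4}\Gamma(5/4)^{17}\Gamma(17/4)^{-1}m^{13/4}$. Assembling the $S_f(q,a)$ contributions over the major arcs then produces the singular series $\mathfrak{S}_{f,17}(m)$. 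The hypothesis $A,|B|,|C|\le e^{e^{10}}$ enters exactly here, in order to make the implied constants in the major-arc approximation uniform over all admissible $f$.

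On the minor arcs, I would apply Weyl's inequality to the quartic leading term of $f$, obtaining $F(\alpha)\ll P^{7/8+\varepsilon}$ for $\alpha\in\mathfrak{m}$, and combine this with a Hua-type moment bound of the shape $\int_0^1|F(\alpha)|^{16}\,d\alpha\ll P^{12+\varepsilon}$, in analogy with Section~\ref{sec: Proof of Main Theorem}. The Weyl bound has to be established for the generic $f$; since its leading quartic coefficient $A/24$ is controlled and the lower-order coefficients are bounded by $e^{e^{10}}$, the classical proof goes through with explicit constants. A careful bookkeeping of the exponents then yields a minor-arc contribution of size $P^{13-\eta}$ for the explicit $\eta$ that translates into the saving $m^{13/4-7/1488}$ claimed in the theorem.

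Finally, one must prove $\mathfrak{S}_{f,17}(m)>0$. The standard route is to write $\mathfrak{S}_{f,17}(m)$ as an Euler product of local densities $\chi_p(m)$ and invoke Hensel's lemma: for every $p\nmid 24A$, the congruence $f(x_1)+\dots+f(x_{17})\equiv m\pmod{p^k}$ lifts from a mod-$p$ solution by nonvanishing of $f'$ at a smooth point, yielding $\chi_p(m)\ge 1-O(p^{-1/2})$ and a convergent product. The main obstacle I foresee is exactly the bad-prime analysis: for $p\mid 24A$, one has to produce, uniformly in $f$, an explicit mod-$p^k$ solution at some $k$ depending only on $p$ and the coefficient bounds, and then convert the resulting lower bound into the effective threshold $m_f$. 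The assumption $A,|B|,|C|\le e^{e^{10}}$ is what makes this effective: all bad primes satisfy $p\le e^{e^{10}}$, so the number of Hensel-lift steps and the size of $m_f$ can be written down in closed form once the circle-method constants from the first three paragraphs are collected.
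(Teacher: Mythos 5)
Your overall strategy is the same as the paper's: circle method with $P\asymp(24m/A)^{1/4}$, Weyl/Hua estimates on the minor arcs, a major-arc approximation producing the singular series and singular integral, Cauchy--Davenport plus Hensel lifting for positivity of $\mathfrak{S}_{f,17}(m)$, and an optimization of the arc-dissection parameter to fix the exponent. Two specific points in your sketch are imprecise enough to be worth flagging.

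First, the claimed minor-arc bound ``$F(\alpha)\ll P^{7/8+\varepsilon}$'' is not what Weyl differencing gives on the full minor arcs. After applying Dirichlet to find $a/q$ with $q\le P^{3-\delta}$, the minor-arc condition only forces $q>P^\delta$, and the cubed-difference estimate (the paper's Lemma~\ref{lem::upper-F(a)}) then yields $|S_f(\alpha)|\ll_A P^{1-\delta/8+o(1)}$, not $P^{7/8+\varepsilon}$. The exponent $7/8$ would require $q\asymp P$ throughout $\mathfrak{m}$, which is false. The saving is $P^{\delta/8}$, and this is what feeds into the final error.

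Second, and related, the exponent $\tfrac{13}{4}-\tfrac{7}{1488}$ does \emph{not} come from the minor-arc contribution. With the paper's choice $\delta=\tfrac{73}{219+9s}=\tfrac{73}{372}$ at $s=17$, the minor-arc term saves $\delta/32=\tfrac{73}{11904}$, which is strictly larger than $\tfrac{7}{1488}=\tfrac{56}{11904}$. The binding constraint is the major-arc error from Lemma~\ref{Approx 4} (and the truncation of the singular series), whose exponent at $s=17$, $\delta=73/372$ is precisely $\tfrac{5\delta+12}{4}=\tfrac{13}{4}-\tfrac{7}{1488}$; the paper's optimization Lemma~\ref{lem::optimal delta} is exactly the balancing of $5\delta-1$ against $\left(2-\tfrac{9s}{73}\right)\delta$ that produces this. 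If you attribute the loss to the minor arcs, the bookkeeping will not close.

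On the singular integral you propose the substitution $u=f(t)$ directly in $v(\beta)=\int_0^P e(\beta f(t))\,dt$; the paper instead first replaces $f(t)$ by its leading term $At^4/24$ (Lemma~\ref{Approx 3}, costing $O(N^\delta)$ per arc) and then replaces the integral by the weighted sum $v_1(\theta)=\tfrac14\sum_{n\le N_0}n^{-3/4}e(\theta n)$ and an inductive Beta-function identity (Lemmas~\ref{v_3(theta)-v_1(theta) bound lemma}--\ref{J_1(m) approximation bound}). Both routes give the same Gamma-factor main term; the paper's version is easier to make fully explicit, which matters here since the theorem is effective. Finally, your reading of the role of the bound $A,|B|,|C|\le e^{e^{10}}$ is slightly off: the paper does not need it to bound the set of bad primes (Lemma~\ref{lem:lower-bound-T_m(p)} handles all $p<11$ uniformly via the $p$-adic parameter $\tau$); it is used only so that the coefficient-dependent factors $A^{\,\cdot}$, $(A+4|B|)$ appearing in Theorem~\ref{thm: main Theorem for formula} can be absorbed into the single numerical constant $1.6\times 10^{42}$.
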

Thus, Theorem \ref{thm: representations} is a special case of Theorem \ref{thm: representations general for deg 4}. In Section \ref{sec: Proof of Main Theorem}, we establish a more general result by considering sums of $s$ polytope numbers represented by $f$ for any $s\geq 17$. We also give explicit bounds for $m_f$ and $\mathfrak{S}_{f,17}(m)$ with respect to the degree-four polynomial $f$. 

\subsection*{Structure of the Paper} The paper is organized as follows. Section \ref{sec: initial set-up} covers the initial setup needed for the Hardy--Littlewood circle method, which is used to obtain the asymptotic formulas for the number of representations using numbers represented by $f$, defined in Theorem \ref{thm: representations general for deg 4}. Sections \ref{sec: minor arcs} contains details of the minor arc estimates. Sections \ref{sec: major arcs}, \ref{sec: Major Arcs II} and \ref{sec: Major Arcs Sec III} are devoted to the major arc estimates. Lastly, we present the proofs of Theorem \ref{thm: representations} and Theorem \ref{thm: representations general for deg 4} in Section \ref{sec: Proof of Main Theorem}. 

\subsection*{General Notation} We employ some standard notation that will be used throughout the article.
\begin{itemize}
    \item Throughout the paper, the expressions $f(X)=O(g(X))$, $f(X) \ll g(X)$, and $g(X) \gg f(X)$ are equivalent to the statement that $|f(X)| \leq C|g(X)|$ for all sufficiently large $X$, where $C>0$ is an absolute constant. 
    \item We define by $d(n)$, the divisor function defined on $\mathbb{N}$ by $d(n) = \sum_{d \mid n}1.$
    \item We write $e(\theta)$ to denote the expression $\exp{(2\pi i \theta)}$.
    \item Given $\alpha \in \mathbb{R}$, the notation $\|\alpha\|$ denotes the smallest distance of $\alpha$ to an integer.
    \item The letter $p$ always denotes a prime number.
\end{itemize}
 
\section{Setup}\label{sec: initial set-up}
In this section, we outline the initial setup required to employ the Hardy--Littlewood circle method to prove Theorem \ref{thm: representations general for deg 4}. Let $f$ be a polynomial satisfying \eqref{defn: f(n)}. For convenience, we denote by $f_n$ the value of $f(n)$ for $n \in \N$. 

For a sufficiently large positive integer $m$, let $\mathcal{R}_{f, s}(m)$ denote the number of ways to write $m$ as a sum of $s$ numbers of form $f(n)$ for some $n \in \N$, that is,
\begin{equation}\label{eq:def-f_s}
   \mathcal{R}_{f, s}(m) = \left|\left\{ (n_1, \dots, n_s) \in \N^s \,:\, m = f(n_1) + \dots + f(n_s)\right\}\right|. 
\end{equation}
Let $\alpha \in \R$ and $N \in \N$ such that $m\leq f_N$. Then, define 
\begin{equation} \label{eq:def-S(alpha)}
S_{f,N}(\alpha) := \sum_{i = 1}^N e(\alpha f_n).  
\end{equation}
For convenience, we write $S_f(\alpha) = S_{f,N}(\alpha)$. It follows that
\begin{align*}
    S_f(\alpha)^s = \sum_{n_1}^N \sum_{n_2=1}^N \dots \sum_{n_s=1}^N e(\alpha(f_{n_1} + f_{n_2} + \dots + f_{n_2})) = \sum_{n = 1}^{ s f_N} \mathcal{R}_{f, s}(n) e(\alpha n).
\end{align*}
Applying Cauchy's formula, we obtain 
\begin{equation}\label{eq::f_s(m)-formula}
  \mathcal{R}_{f, s}(m) = \int_{0}^1 S_f(\alpha)^s e(-\alpha m) \dd\alpha.  
\end{equation}
Next, we choose
\begin{align}\label{Defining N}
   N = \bigg \lceil \bigg (\frac{24m}{A}\bigg )^{\frac{1}{4}} \bigg \rceil +1,
\end{align}
and let $ P=N^\delta$ for some $\delta\in\R$ such that $N^{3\delta -4} < 1/2$. We then define $\mathfrak{I}=\left(N^{\delta-4}, 1+N^{\delta-4}\right]$. For integers $1 \leq a \leq q \leq P$ and $(a,q) = 1$, we define the major arcs $\mathfrak{M}(q, a)$ as  
\begin{align*}
    \fM(q,a) = \{ \alpha \,:\, |\alpha - a/q| \leq N^{\delta-4}\}.
\end{align*}
Let $\mathfrak{M}$ denote the union of the $\mathfrak{M}(q, a)$'s, i.e.,
\[
\mathfrak{M}=\bigcup_{\substack{1\leq a\leq q\leq P\\(a,q)=1}} \mathfrak{M}(q,a).
\]
Observe that $\mathfrak{M} \subset \mathfrak{I}$, so we define the minor arcs as 
\[
\mathfrak{m}=\mathfrak{I} \backslash \mathfrak{M}.
\]
For distinct pairs $a / q \neq a^{\prime} / q^{\prime}$ with $1 \leq q, q^{\prime} \leq P$, we have
$$
\left|\frac{a}{q}-\frac{a^{\prime}}{q^{\prime}}\right| \geq  \frac{1}{q q^{\prime}}>N^{-2\delta}>  2 N^{\delta-4}.
$$
Thus, the $\mathfrak{M}(q, a)$'s are pairwise disjoint. Finally, using \eqref{eq::f_s(m)-formula}, we split $\mathcal{R}_{f, s}(m)$ into integrals over the major and minor arcs:
\begin{align}\label{eq::Major+Minor Arcs}
\mathcal{R}_{f, s}(m) = \int_{\mathfrak{M}} S_f(\alpha)^s e(-\alpha m) \dd \alpha+\int_{\mathfrak{m}} S_f(\alpha)^s e(-\alpha m) \dd \alpha.
\end{align}

\section{Minor Arcs}\label{sec: minor arcs}

In this section, we estimate the integral over the minor arcs. More precisely, we prove the following theorem, which shows that the integral over the minor arcs in \eqref{eq::Major+Minor Arcs} is \textit{small}. 
\begin{thm}\label{thm::minor-arc}
Let $S_f(\alpha)$ be defined in \eqref{eq:def-S(alpha)}. Then for $s \geq 17$ and $N \geq e^3$, we have 
\begin{align*}
      \left|\int_{\fm} S_f(\alpha)^s e(-m \alpha) \dd\alpha \right| \leq 10^6 \cdot 11^{s-16} A^{\frac{s-16}{8}}(\log N)^{\frac{s-16}{8}} N^{s-4 - \frac{\delta(s-16)}{8} + \frac{s}{\log\log N}}.
\end{align*}
\end{thm}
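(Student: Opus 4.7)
The plan is to apply the classical Weyl-plus-Hua decomposition with explicit constant tracking. Start from the elementary bound
\[
\biggl|\int_{\fm} S_f(\alpha)^s e(-m\alpha)\,\dd\alpha\biggr|
\leq \Bigl(\sup_{\alpha \in \fm} |S_f(\alpha)|\Bigr)^{s-16} \int_0^1 |S_f(\alpha)|^{16}\,\dd\alpha,
\]
which splits the task into (a) a pointwise Weyl-type bound on $\fm$, and (b) a Hua-type mean-value bound on $[0,1]$. The threshold $16=2^4$ is the natural Hua exponent for degree four, and $s-16\geq 1$ when $s\geq 17$ supplies exactly the extra savings needed to convert Hua into the minor-arc estimate.

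For (a), I would prove an explicit Weyl inequality for $S_f$. Given $\alpha\in\fm$, apply Dirichlet's theorem to the leading coefficient $\alpha A/24$ of $\alpha f$ to obtain $a/q$ with $(a,q)=1$, $q\leq N^{4-\delta}$, and $|\alpha A/24-a/q|\leq (qN^{4-\delta})^{-1}$. Since $\alpha\notin\fM$, one then shows the denominator cannot be too small: otherwise, unwinding $\alpha\approx 24a/(qA)$ as a rational with denominator dividing $qA$ would place $\alpha$ inside some $\fM(q',a')$, contradiction. This forces roughly $q>N^{\delta}/A$. Inserting this lower bound on $q$ into the standard Weyl differencing for degree four (three rounds of van der Corput plus Cauchy--Schwarz), and replacing the textbook $N^{\epsilon}$ loss by the explicit divisor bound $d(n)\leq n^{c/\log\log n}$ valid for $N\geq e^3$, should produce
\[
|S_f(\alpha)| \ll A^{1/8}(\log N)^{1/8}\, N^{1-\delta/8+c/\log\log N},
\]
where the $A^{1/8}$ emerges precisely from the $q>N^{\delta}/A$ constraint under the eighth-root Weyl exponent. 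Raising to the $(s-16)$-th power reproduces the $A^{(s-16)/8}(\log N)^{(s-16)/8}$ prefactor in the theorem.

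For (b), I would invoke Hua's inequality for the degree-four sum $S_f$, yielding $\int_0^1|S_f(\alpha)|^{16}\,\dd\alpha \ll N^{12+c'/\log\log N}$. Since $f$ has integer coefficients and leading coefficient $A/24>0$, the usual proof via counting solutions of $f(x_1)+\cdots+f(x_8)=f(y_1)+\cdots+f(y_8)$ with $x_i,y_i\leq N$ carries over, and the $\epsilon$-loss is again made explicit through the same divisor bound. Multiplying the pointwise and mean-value estimates, combining the $N^{(s-16)c/\log\log N}$ and $N^{16c'/\log\log N}$ factors into a single $N^{s/\log\log N}$ by enlarging constants, and packaging all numerical factors into $10^6\cdot 11^{s-16}$ yields the claimed bound with exponent $s-4-\delta(s-16)/8+s/\log\log N$.

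The real difficulty here is not strategic but numerical: Weyl's and Hua's inequalities are well known, but every implicit constant must be tracked and every qualitative $N^{\epsilon}$ must be replaced by the explicit $N^{c/\log\log N}$. Performing this through the three nested Cauchy--Schwarz steps in Weyl's differencing, through Hua's inductive argument, and through the passage between Dirichlet approximations of $\alpha$ and of $\alpha A/24$, while keeping the total prefactor inside $10^6\cdot 11^{s-16}$ and retaining the clean dependence on $A$ and $\log N$, is the delicate technical portion of the argument.
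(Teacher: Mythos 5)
Your proposal follows the paper's proof essentially verbatim: the same split $\bigl|\int_{\fm} S_f^s e(-m\alpha)\,\dd\alpha\bigr|\leq(\sup_{\fm}|S_f|)^{s-16}\int_0^1|S_f|^{16}\dd\alpha$, the same Dirichlet-plus-explicit-Weyl pointwise bound on $\fm$ (the paper's Lemma~\ref{lem::upper-F(a)}), and the same explicit Hua mean-value estimate $\int_0^1|S_f|^{16}\dd\alpha\leq 10^6 N^{12+12.7932/\log\log N}$ (Lemma~\ref{lem:upper-int.S(alpha)}), with the explicit divisor bound of Lemma~\ref{lem::Explicit Divisor Bound} replacing the qualitative $N^{\varepsilon}$ throughout. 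The only cosmetic difference is that the paper applies Dirichlet directly to $\alpha$, obtains $q>N^\delta$ from the minor-arc definition, and absorbs the factor $A/24$ into the parameter $\eta=24A$ of its Weyl lemma, rather than approximating $\alpha A/24$ and unwinding the denominator as you suggest; the resulting $A^{1/8}$ dependence and the final exponent are the same either way.
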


In order to prove Theorem \ref{thm::minor-arc}, we first establish some preliminary lemmas that are necessary in our treatment. 

\begin{lem}[{\cite[Theorem 1]{nicolas1983majorations}}] \label{lem::Explicit Divisor Bound}
Let $d(n)$ denote the divisor function. Then for $n \geq  e^3$, we have
\[ d(n) \leq n^{\frac{1.0661}{\log \log n}}.
\]
\end{lem}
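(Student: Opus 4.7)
The plan is to follow the classical line initiated by Wigert and refined by Ramanujan, Erd\H{o}s and later Nicolas--Robin, which reduces the problem to an optimization over prime factorizations and then invokes explicit Chebyshev-type bounds. Write $n = p_1^{a_1}\cdots p_k^{a_k}$ with the primes in increasing order, so that $d(n) = \prod_i(a_i+1)$. Setting
\[
F(n) := \frac{\log d(n)\,\log\log n}{\log n} = \frac{\log\log n}{\log n}\sum_{i=1}^{k}\log(a_i+1),
\]
the goal is to show $F(n)\leq 1.0661$ for $n\geq e^{3}$. The first reduction is that it suffices to prove the bound at the local maxima of $F$, which occur at \emph{superior highly composite numbers}: integers of the form $n = \prod_{p\leq y} p^{a_p(y)}$ with $a_p(y) = \lfloor 1/(p^{1/\lambda}-1)\rfloor$ for a real parameter $\lambda$. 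Indeed, for any other $n$ one can increase $F$ by removing or adding an appropriate prime power, so only these extremal integers need to be analyzed.

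Next, I would perform the Lagrangian optimization. Maximizing $\sum \log(a_i+1)$ under the constraint $\sum a_i \log p_i = \log n$ leads, after a short calculation, to the condition that for each prime $p_i$ the optimal exponent satisfies $a_i+1 \approx \log p_i/(\lambda\log 2)$ for some multiplier $\lambda>0$ determined by $n$. Substituting back gives, up to controllable error,
\[
\log d(n) \leq \frac{\log 2}{\log\log n}\cdot\frac{\log n}{\log\log n}\bigl(1 + o(1)\bigr),
\]
which is the Wigert--Ramanujan asymptotic $d(n)\leq n^{(\log 2 + o(1))/\log\log n}$. To replace the $o(1)$ by an explicit $1.0661-\log 2$, I would parametrize through $y$ (the largest prime appearing) and write both $\log n$ and $\log d(n)$ as sums over $p\leq y$, namely
\[
\log n = \sum_{p\leq y} a_p(y)\log p, \qquad \log d(n) = \sum_{p\leq y}\log(a_p(y)+1).
\]

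The effective step is then to insert Rosser--Schoenfeld-type bounds on $\theta(y)=\sum_{p\leq y}\log p$ and on $\sum_{p\leq y}\log\log p$ (or equivalently on $\pi(y)$ weighted appropriately) to produce a uniform upper estimate for $F(n)$ valid for all $y$ beyond some explicit threshold $y_0$. Below $y_0$ the number of superior highly composite $n$ is finite and one verifies the inequality by direct computation on a finite list of extremal integers, which also pins down exactly where the maximum $1.0661$ is attained. The initial range $n\geq e^{3}$ is handled by this finite check together with monotonicity of $F$ on short intervals between consecutive extremal numbers.

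The main obstacle is honest bookkeeping of the constants. The Wigert--Ramanujan argument is soft and gives the correct leading order, but squeezing the multiplicative constant down to $1.0661$ requires simultaneously (i) choosing the Chebyshev-type bounds sharply enough that the resulting inequality closes, (ii) controlling the discretization error coming from the floor function in $a_p(y)$, and (iii) covering the moderate range $e^3\leq n\leq e^{y_0}$ by explicit numerical verification at each superior highly composite number. This tradeoff between the size of the finite check and the looseness of the analytic estimate for large $n$ is exactly what Nicolas and Robin calibrate, and it is the step I would expect to absorb the bulk of the effort.
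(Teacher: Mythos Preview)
The paper does not prove this lemma at all: it is quoted verbatim as \cite[Theorem~1]{nicolas1983majorations} and used as a black box. So there is no ``paper's own proof'' to compare against; your proposal is in effect a sketch of the argument in the cited reference rather than of anything the present authors do.

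That said, your outline is broadly the Nicolas--Robin strategy and is a reasonable roadmap, with two small slips worth flagging. First, the Lagrangian computation is inverted: maximizing $\sum\log(a_i+1)$ under $\sum a_i\log p_i=\log n$ gives $a_i+1\approx 1/(\lambda\log p_i)$, not $\log p_i/(\lambda\log 2)$; the latter would make the exponents increase with $p$, which is backwards. Second, you conflate two parametrizations: the formula $a_p=\lfloor 1/(p^{1/\lambda}-1)\rfloor$ is the parametrization of superior highly composite numbers by the real parameter $\lambda$ (equivalently~$\epsilon$), not by the largest prime $y$; writing $a_p(y)$ for it is misleading. Neither error is fatal to the plan, but both would need to be fixed before the bookkeeping in step~(ii) of your obstacle list can be carried out correctly.
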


Let $\psi(x)$ be a real-valued function of $x$. For $j\in\N\cup\{0\}$, define the $j$-th forward difference operator as  
\begin{align*}
\Delta_0(\psi(x) ; h)&=\psi(x)\\
\Delta_1(\psi(x) ; h)&=\psi(x+h)-\psi(x),
\end{align*}
and 
$$
\begin{aligned}
\Delta_j(\psi(x) ; \mathbf{h}) =\Delta_j\left(\psi(x) ; h_1, \ldots, h_j\right) =\Delta_1\left(\Delta_{j-1}\left(\psi(x) ; h_1, \ldots, h_{j-1}\right) ; h_j\right) .
\end{aligned}
$$
It's easy to see that when $1 \leq j \leq k$,
$$
\Delta_j\left(x^k ; \mathbf{h}\right)=h_1 \ldots h_j p_j\left(x ; h_1, \ldots, h_j\right),
$$
where $p_j$ is a polynomial in $x$ of degree $k-j$ with leading coefficient $k ! /(k-j)$ !. By the linearity of the operator $\Delta_j$, it follows that
\[
\Delta_j\left(a_k x^k+\ldots+a_1 x ; \mathbf{h}\right)=\sum_{i=1}^k a_i \Delta_j\left(x^i ; \mathbf{h}\right).
\]

\begin{lem}[{\cite[Lemma 2.3]{vaughan1997hardy}}] \label{lem::Weyl Differencing}
Let $\psi(x)$ be a real-valued arithmetic function, and suppose
$$
F(\psi)=\sum_{1 \leq x \leq X} e(\psi(x)).
$$
Then for each $j \in \mathbb{N}$,
$$
|F(\psi)|^{2^j} \leq(2 X)^{2^j-j-1}  \sum_{\left|h_1\right|<X} \cdots \sum_{\left|h_j\right|<X} \sum_{x \in T_j(\mathbf{h})} e\left(\Delta_j(\psi(x) ; \mathbf{h})\right) ,
$$
where $T_j(\mathbf{h})$ denotes the interval of integers defined by putting $T_0(h)=[1, X]$, and for $j \geq  1$, we recursively set
$$
T_j\left(h_1, \ldots, h_j\right)=T_{j-1}\left(h_1, \ldots, h_{j-1}\right) \cap\left\{x \in[1, X]: x+h_j \in T_{j-1}\left(h_1, \ldots, h_{j-1}\right)\right\}.
$$
\end{lem}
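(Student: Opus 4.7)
The plan is induction on $j$, executing the classical Weyl differencing trick.

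For the base case $j = 1$, I expand
\begin{align*}
|F(\psi)|^{2} = F(\psi)\overline{F(\psi)} = \sum_{1 \leq x, y \leq X} e(\psi(y) - \psi(x)),
\end{align*}
and substitute $h_{1} = y - x$, so that $|h_{1}| < X$ and the joint constraint $1 \leq x, y \leq X$ becomes exactly $x \in T_{1}(h_{1})$. Since $\psi(x + h_{1}) - \psi(x) = \Delta_{1}(\psi(x); h_{1})$ and $(2X)^{2^{1} - 1 - 1} = 1$, the stated bound holds at $j = 1$ (in fact with equality).

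For the inductive step, suppose the inequality is known at level $j$. I take absolute values on the right and square both sides, then apply the Cauchy--Schwarz inequality to the outer sum over the $(2X - 1)^{j} \leq (2X)^{j}$ tuples $\mathbf{h} = (h_{1}, \ldots, h_{j})$, obtaining
\begin{align*}
|F(\psi)|^{2^{j+1}} \leq (2X)^{2(2^{j} - j - 1) + j} \sum_{|h_{1}| < X} \cdots \sum_{|h_{j}| < X} \Bigl| \sum_{x \in T_{j}(\mathbf{h})} e(\Delta_{j}(\psi(x); \mathbf{h})) \Bigr|^{2}.
\end{align*}
Expanding the modulus squared as a double sum over $x, y \in T_{j}(\mathbf{h})$ and setting $h_{j+1} = y - x$ converts the inner block into
\begin{align*}
\sum_{|h_{j+1}| < X} \sum_{x \in T_{j+1}(\mathbf{h}, h_{j+1})} e\bigl(\Delta_{j}(\psi(x + h_{j+1}); \mathbf{h}) - \Delta_{j}(\psi(x); \mathbf{h})\bigr).
\end{align*}
The recursive definition of $T_{j+1}$ precisely captures the requirement that both $x$ and $x + h_{j+1}$ lie in $T_{j}(\mathbf{h})$, and the recursive definition of $\Delta_{j+1}$ identifies the exponent as $\Delta_{j+1}(\psi(x); \mathbf{h}, h_{j+1})$. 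The exponent on $2X$ simplifies to $2 \cdot 2^{j} - 2j - 2 + j = 2^{j+1} - (j+1) - 1$, closing the induction.

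The whole argument is essentially bookkeeping, and no analytic input beyond Cauchy--Schwarz is used. The only real obstacle is making sure the nested definitions of $T_{j+1}$ and $\Delta_{j+1}$ line up with what the substitution $y = x + h_{j+1}$ produces after squaring; once the indexing is handled cleanly, the exponent arithmetic closes the induction automatically.
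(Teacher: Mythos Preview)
Your argument is correct and is exactly the standard Weyl-differencing induction via Cauchy--Schwarz. The paper does not supply its own proof of this lemma, citing instead \cite[Lemma~2.3]{vaughan1997hardy}, and your proof matches the one given there.
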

We also make use of the following lemmas, which can be found in \cite{dong2024pollock}.

\begin{lem}[{\cite[Lemma 3.3]{dong2024pollock}}]\label{lem::Geometric Sum}
Let $\alpha \in \mathbb{R}$. Let $X$ and $Y$ be real numbers with $Y>1$. Then
$$
\left \lvert \sum_{X<x \leq X+Y} e(\alpha x) \right \rvert \leq \min \left\{Y+1,\frac{1}{2}\|\alpha\|^{-1}\right\}.
$$
\end{lem}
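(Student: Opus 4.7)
The plan is to prove the two bounds separately and then take the minimum. First I would establish the trivial bound $Y+1$ by simply counting the number of terms in the sum and applying the triangle inequality. The integers $x$ satisfying $X < x \leq X+Y$ form an interval of consecutive integers of length at most $\lfloor Y\rfloor + 1 \leq Y+1$, and each summand has modulus one, so $|\sum_{X<x\leq X+Y} e(\alpha x)| \leq Y+1$ immediately.

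For the second bound, I would handle the case $\alpha \in \mathbb{Z}$ trivially (when $\|\alpha\| = 0$, the bound $\tfrac{1}{2}\|\alpha\|^{-1}$ is interpreted as $+\infty$ and there is nothing to prove). Otherwise, denote the integers in $(X, X+Y]$ by $x_0, x_0+1, \dots, x_0+K-1$ for some integer $K \geq 0$, and use the geometric series formula
\[
\sum_{j=0}^{K-1} e(\alpha(x_0+j)) = e(\alpha x_0)\, \frac{e(\alpha K)-1}{e(\alpha)-1},
\]
valid because $\alpha \notin \mathbb{Z}$. Taking absolute values and writing $|e(\alpha K)-1| = 2|\sin(\pi \alpha K)| \leq 2$ and $|e(\alpha)-1| = 2|\sin(\pi\alpha)|$, I obtain the estimate $1/|\sin(\pi \alpha)|$.

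To finish, I would invoke the elementary inequality $|\sin(\pi t)| \geq 2\|t\|$, valid for all real $t$. This follows from the concavity of $\sin$ on $[0,\pi/2]$, which gives $\sin(\pi u) \geq 2u$ for $u \in [0, 1/2]$, combined with the fact that $|\sin(\pi\alpha)| = \sin(\pi\|\alpha\|)$ since sine is $\pi$-periodic in absolute value. Substituting $u = \|\alpha\|$ yields $|\sin(\pi\alpha)| \geq 2\|\alpha\|$, and hence $1/|\sin(\pi\alpha)| \leq \tfrac{1}{2}\|\alpha\|^{-1}$, completing the geometric-series bound.

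The proof poses essentially no obstacle — it is standard and the only mildly delicate point is keeping careful track of endpoints when counting integers in $(X, X+Y]$ and confirming that the geometric series bound is strict enough (i.e., the constant $\tfrac{1}{2}$ rather than the weaker $1$) through the sharp sine inequality. Combining the two bounds by taking the minimum gives the stated inequality.
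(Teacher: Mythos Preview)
Your argument is correct and is the standard proof of this classical geometric-sum estimate. Note that the paper does not actually supply a proof of this lemma: it is quoted verbatim from \cite[Lemma 3.3]{dong2024pollock} and used as a black box, so there is no in-paper proof to compare against. Your write-up would serve perfectly well as the omitted justification.
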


\begin{lem}[{\cite[Corollary 3.6]{dong2024pollock}}]\label{lem::Weyl's Inequality with Eta}
Let $X,Y \in \mathbb{R}$ with $X,Y \geq  1$. Let $\alpha,\beta \in \mathbb{R}$ and suppose there exist $a \in \mathbb{Z}$ and $q \in \mathbb{N}$ with $(a, q)=1$, $q>100$ and $|\alpha-a / q| \leq \eta q^{-2}$, for some absolute constant $\eta \geq  1$. Then
$$
\sum_{1 \leq x \leq X} \min \left\{Y,\|\alpha x+\beta\|^{-1}\right\} \leq 8X Y\eta\left(q^{-1}+Y^{-1}+X^{-1}+q(X Y)^{-1}\right) \log q .
$$
\end{lem}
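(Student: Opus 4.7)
The plan is to execute the classical Vinogradov/Vaughan counting argument for sums of the form $\sum \min(Y,\|\alpha x+\beta\|^{-1})$ under a rational approximation to $\alpha$, with careful bookkeeping of constants to achieve the explicit factor $8$.

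\textbf{Step 1 (Block partition).} Write $\alpha = a/q+\theta$ with $|\theta|\le \eta q^{-2}$. Partition $\{1,\dots,X\}$ into $\lceil X/q\rceil \le X/q+1$ consecutive blocks $B_1,B_2,\dots$, each of length at most $q$. The total sum is bounded by the sum over blocks, so it suffices to bound the contribution of a single full block $B$ of length $q$.

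\textbf{Step 2 (Distribution on a block).} As $x$ runs over a block $B$ of length $q$, the quantity $ax\pmod q$ permutes $\{0,1,\dots,q-1\}$ because $(a,q)=1$. Hence, modulo $1$, the values $\{ax/q\}_{x\in B}$ are exactly $\{0,1/q,\dots,(q-1)/q\}$ in some order. Within the block, the perturbation term $\theta x+\beta$ varies by at most $|\theta|q\le \eta/q$. This yields the key counting estimate: for each $t\in(0,1/2]$,
\[
\#\{x\in B:\|\alpha x+\beta\|\le t\}\;\le\;2tq+2\eta+1.
\]
Indeed, at most $2\lfloor tq\rfloor+1$ of the unperturbed values are within $t$ of an integer, and each perturbation of size $\le\eta/q$ can move at most an extra $2\eta$ values across the threshold.

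\textbf{Step 3 (Per-block bound).} Order the values $\|\alpha x+\beta\|$ for $x\in B$ as $r_1\le r_2\le\cdots\le r_{|B|}$. Step 2 gives $r_k\ge (k-c)/(2q)$ for $k>c:=2\eta+1$. Splitting the sum at the threshold $r_k\lessgtr 1/Y$,
\[
\sum_{x\in B}\min\{Y,\|\alpha x+\beta\|^{-1}\}
\;\le\; cY + \underbrace{(2q/Y)\,Y}_{k\le c+2q/Y} + \underbrace{\sum_{k>c+2q/Y}\frac{2q}{k-c}}_{\le\,2q\log(Y/2)}.
\]
One then splits into the regimes $Y\le q$ and $Y>q$: in the first, $\log(Y/2)\le\log q$ directly; in the second, the inequality $\log x\le x$ applied to $x=Y/q$ gives $q\log(Y/q)\le Y$, absorbing the excess into the $Y$-term. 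Either way one obtains a per-block bound of the form $4\eta(Y+q\log q)$.

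\textbf{Step 4 (Assembling).} Summing over the $\le X/q+1$ blocks gives
\[
\sum_{x=1}^X\min\{Y,\|\alpha x+\beta\|^{-1}\}\;\le\;4\eta\bigl(X/q+1\bigr)(Y+q\log q)
\;=\;4\eta\Bigl(\frac{XY}{q}+X\log q+Y+q\log q\Bigr).
\]
Each of the four terms is at most the corresponding term in $8\eta XY\log q\cdot(q^{-1}+Y^{-1}+X^{-1}+q(XY)^{-1})$, using $\log q\ge 1$ (which is ensured by $q>100$). This yields the claimed bound with constant $8$.

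\textbf{Main obstacle.} The technical heart is the sharpness of Step 2's counting inequality and the regime split in Step 3: one must convert $\log Y$ into $\log q$ without losing a factor, and this is the place where the hypothesis $q>100$ and the constant $\eta\ge 1$ are exploited to absorb lower-order terms cleanly so that the final constant is exactly $8$ rather than a larger absolute number. The use of $(a,q)=1$ is essential to guarantee the exact bijection in Step 2 that makes the $2tq$ count tight.
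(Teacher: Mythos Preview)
The paper does not supply its own proof of this lemma; it is quoted verbatim as \cite[Corollary~3.6]{dong2024pollock} and used as a black box. So there is no in-paper argument to compare against.

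On its own merits, your proposal is the standard Vinogradov block argument and is essentially correct, including the constant tracking. A few places where the bookkeeping is loose but fixable: in Step~2, the phrase ``at most $2\lfloor tq\rfloor+1$ of the unperturbed values'' should read ``at most $\lfloor 2tq\rfloor+1$'' (or simply $2tq+1$); and the cleaner justification of the final count is to bound directly the number of $k$ with $\|k/q+\gamma\|\le t+\eta/q$, which gives $2tq+2\eta+1$ in one stroke. In Step~3, $c=2\eta+1$ need not be an integer, so the index shift $j=k-c$ is formal; you should phrase it as ``for $k>\lceil c\rceil$'' and absorb the extra $O(Y)$ term into the $4\eta Y$ budget. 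Also, the harmonic-sum estimate $\sum_{c+2q/Y<k\le q}2q/(k-c)\le 2q\log(Y/2)$ carries an additive $+Y$ from the first term of the sum, which again folds into the $4\eta Y$ term since $\eta\ge1$. With these cosmetic fixes, your Steps~3--4 do yield the per-block bound $4\eta(Y+q\log q)$ and hence the claimed inequality with constant $8$, using $\log q\ge 1$ from $q>100$ exactly as you note.
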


For $\balp = (\alpha_1, \dots, \alpha_4)\in\R^4$ and $\psi(x) = \alpha_1 x + \dots +\alpha_4 x^4$, let 
\begin{equation*}
F(\balp) = \sum_{1 \leq x \leq X} e(\psi(x))  .
\end{equation*}

\begin{lem}\label{lem::upper-F(a)}
Let $\eta \geq 1$ be fixed and $X \geq e^{3}$. Let $\balp = (\alpha_1, \dots, \alpha_4) \in \R^4$. Suppose there exist $a \in \Z$ and $q \in \N$ which satisfy $q > 100$ and $|\alpha_4 - a/q| \leq \eta q^{-2}$. Then 
\begin{equation}\label{eq:bound.F(alpha)}
    |F(\balp)| \leq 2X^{7/8} + 5 \eta^{1/8} X^{1 + \frac{3.1983}{4\log(3\log X)}} \left( q^{-1} + X^{-1} + q X^{-4}\right)^{1/8} (\log q)^{1/8}.
\end{equation}
\end{lem}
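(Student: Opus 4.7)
The plan is to apply the Weyl differencing device (Lemma \ref{lem::Weyl Differencing}) three times to reduce the degree-four exponential sum $F(\balp)$ to a linear one, then bound the resulting sum of minima using Lemma \ref{lem::Geometric Sum}, the divisor estimate of Lemma \ref{lem::Explicit Divisor Bound}, and Lemma \ref{lem::Weyl's Inequality with Eta}. The first step yields
\[
|F(\balp)|^8 \leq (2X)^4 \sum_{|h_1|, |h_2|, |h_3| < X} \left|\sum_{x \in T_3(\mathbf{h})} e\bigl(\Delta_3(\psi(x); \mathbf{h})\bigr)\right|.
\]
Because $\Delta_3$ annihilates every monomial in $x$ of degree strictly less than $3$, the phase $\Delta_3(\psi(x); \mathbf{h})$ is a linear polynomial in $x$ with leading coefficient $4!\,h_1 h_2 h_3 \alpha_4 = 24\, h_1 h_2 h_3 \alpha_4$ (the $x$-independent piece has unit modulus and factors out). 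By Lemma \ref{lem::Geometric Sum}, the inner sum is bounded by $\min\{X,\tfrac{1}{2}\|24 h_1 h_2 h_3 \alpha_4\|^{-1}\}$.

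Next I split the outer sum according to whether $h_1 h_2 h_3 = 0$. The diagonal terms (some $h_i = 0$) contribute at most $3(2X-1)^2 \cdot X \leq 12X^3$; multiplying by the prefactor $(2X)^4 = 16X^4$ and taking an eighth root produces precisely the $2 X^{7/8}$ summand in \eqref{eq:bound.F(alpha)}. For the off-diagonal part, I substitute $n = h_1 h_2 h_3$ and observe that each nonzero $n$ with $|n| \leq X^3$ arises at most $8\,\tau_3(|n|)$ times from triples with $h_i \neq 0$. Using the multiplicativity of $\tau_3$ together with the prime-power inequality $\tau_3(p^a) \leq \tau(p^a)^2$, one has $\tau_3(n) \leq \tau(n)^2$, and Lemma \ref{lem::Explicit Divisor Bound} then yields $\tau_3(n) \leq X^{6.3966/\log(3\log X)}$ uniformly for $1 \leq n \leq X^3$ whenever $X \geq e^3$. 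I factor this out and apply Lemma \ref{lem::Weyl's Inequality with Eta} to $\sum_{n=1}^{X^3} \min\{X, \tfrac{1}{2}\|24 \alpha_4 n\|^{-1}\}$, using the reduced approximation $24a/q = a'/q'$ with $q' = q/\gcd(24, q)$ and $|24\alpha_4 - a'/q'| \leq 24\eta (q')^{-2}$. With $X \mapsto X^3$ and $Y \mapsto X$ this produces a bound of order $\eta X^4 (q^{-1} + X^{-1} + qX^{-4})\log q$, up to an absolute constant that absorbs the discrepancies between $q$ and $q'$.

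Combining the diagonal and off-diagonal estimates and invoking the subadditivity $(A+B)^{1/8} \leq A^{1/8} + B^{1/8}$ delivers a two-term bound of the shape \eqref{eq:bound.F(alpha)}; it remains to verify that the accumulated constants collapse into the stated prefactors $2$ and $5$. The hard part will be two pieces of bookkeeping: (i) confirming that the reduction $24a/q \mapsto a'/q'$ preserves the hypothesis $q' > 100$ required by Lemma \ref{lem::Weyl's Inequality with Eta}, which for values of $q$ close to the threshold may need either a direct Dirichlet approximation of $24\alpha_4$ or an auxiliary case split; and (ii) tracking the absolute constants through the chain $(2X)^4$, the sign factor $8$, the divisor rescaling, the factor $24$ in $\eta' \leq 24\eta$, and the constant $8$ from Lemma \ref{lem::Weyl's Inequality with Eta}, so that the eighth root of the product lies below the target value $5$.
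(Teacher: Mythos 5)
Your proposal is essentially the paper's proof: Weyl differencing thrice, reduce to $\Delta_3(\psi(x);\mathbf h) = 24h_1h_2h_3\alpha_4 x + r$, separate the terms with some $h_i = 0$ to produce the $2X^{7/8}$ piece, then count off-diagonal triples via a three-fold divisor function and apply the explicit divisor bound and the Weyl-type inequality. The constants ($12X^3$, factor $8$ from signs, the bound $\tau_3 \leq d^2$, the $(2X)^4$ prefactor) all match.

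The one place where you diverge — and where the complication you flag actually arises — is the substitution. You set $n = h_1 h_2 h_3$, leaving the $24$ in the phase and bounding $\sum_n \min\{X, \tfrac12\|24 n \alpha_4\|^{-1}\}$, which forces you to replace the rational approximation $a/q$ of $\alpha_4$ by a reduced approximation $a'/q'$ of $24\alpha_4$ with $q' = q/\gcd(24,q)$. You correctly observe that this may drop $q'$ below the threshold $100$ required by Lemma~\ref{lem::Weyl's Inequality with Eta} (indeed $q' > 100$ fails whenever $100 < q \leq 2400$ and $\gcd(24,q)$ is large enough), and you leave that case unresolved. The paper sidesteps the issue by absorbing the $24$ into the summation variable instead: writing $n = 24h_1h_2h_3$ yields $\sum_{1 \leq n \leq 24X^3} (d(n/24))^2 \min\{X+1, \|n\alpha_4\|^{-1}\}$, so Lemma~\ref{lem::Weyl's Inequality with Eta} applies directly with the given pair $(a,q)$ and the original $\eta$ — no reduction, no case split. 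If you adopt that parametrization, the two ``pieces of bookkeeping'' you identify collapse to routine constant-tracking, and your outline closes cleanly.
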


\begin{proof}
Clearly, the desired estimate is trivial when $q \geq X^4$, so we may assume $q < X^{4}$. Applying Lemma \ref{lem::Weyl Differencing} with $j = 3$, we obtain 
\begin{align*}
    |F(\balp)|^8 \leq (2X)^4\sum_{|h_1| < X} \sum_{|h_2| < X} \sum_{|h_3| < X} \cE(\bh),
\end{align*}
where 
\[ \cE(\bh) = \sum_{x \in T_3(\bh)} e(\Delta_3(\psi(x);\bh)\]
and $T_3(\bh)$ is a suitable interval of integers contained in $[1,X]$. Note that $\Delta_3(\psi(x);\bh) = 24h_1h_2h_3 x \alpha_4 +  r$, where $r = r(\alpha,\bh)$ is independent of $x$. It follows from Lemma \ref{lem::Geometric Sum} that 
\begin{equation*}
    \cE(\bh) \leq \min\{X + 1, \Vert 24h_1h_2h_3 \alpha_4 \Vert^{-1}\}.
\end{equation*}
Thus,
\begin{align*}
  |F(\balp)|^8 &\leq (2X)^4 \left( \sum_{|h_1| < X} \sum_{|h_2| < X} \sum_{|h_3| < X} \min\{ X + 1, \Vert 24 h_1h_2h_3 \alpha_4 \Vert^{-1} \right)  \\
  &\leq 16 X^4 \left( 12X^3 + 8 \sum_{1 \leq n \leq 24X^3} \left(d\left(\frac{n}{24}\right)\right)^2 \min\{X+1, \Vert n\alpha_4 \Vert^{-1}  \right),
\end{align*}
where the last inequality is obtained by accounting for the summands where $h_1h_2h_3 = 0$. We now use Lemma \ref{lem::Explicit Divisor Bound} and Lemma \ref{lem::Weyl's Inequality with Eta} to obtain the bound 
\begin{align*}
    |F(\balp)|^{8} &\leq 192 X^7 + 2^7 X^{4 + \frac{6.3966}{\log(3\log X)}} \sum_{1 \leq n \leq 24 X^3} \min\{X + 1, \Vert n \alpha_4 \Vert^{-1}\} \\ 
    &\leq 192 X^7 + 10^5 X^{8 + \frac{6.3966}{\log(3\log X)}} \eta \left(\frac{1}{q} + \frac{1}{X} + \frac{q}{X^4}\right)\log q.
\end{align*}
This implies that 
\begin{align*}
    |F(\balp)| \leq 2X^{7/8} + 5 \eta^{1/8} X^{1 + \frac{3.1983}{4\log(3\log X)}} \left( q^{-1} + X^{-1} + q X^{-4}\right)^{1/8} (\log q)^{1/8},
\end{align*}
which completes the proof of Lemma \ref{lem::upper-F(a)}.
\end{proof}

\begin{lem}\label{lem:upper-int.S(alpha)}
Let $S_f(\alpha)$ be as defined in \eqref{eq:def-S(alpha)}. Suppose $1 \leq j \leq 4$. Then for $N \geq e^3$, 
\begin{equation*}
    \int_{0}^1 |S_f(\alpha)|^{2^{j}}\dd\alpha \leq 10^6 N^{2^j - j + \frac{12.7932}{\log\log N}}.
\end{equation*}
\end{lem}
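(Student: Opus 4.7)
The plan is to prove the lemma by induction on $j$, running from $j = 1$ up to $j = 4$.

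For the base case $j = 1$, orthogonality gives $\int_0^1 |S_f(\alpha)|^2 \, d\alpha = |\{(n_1, n_2) \in [1, N]^2 : f(n_1) = f(n_2)\}|$. Since $f$ has degree $4$, every fiber $f^{-1}(c)$ contains at most $4$ integers, so this count is at most $4N$, which is well inside the stated bound $10^6 N^{1 + 12.7932/\log\log N}$.

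For the inductive step $2 \le j \le 4$, I would apply Lemma \ref{lem::Weyl Differencing} with $j$ iterations to obtain
\[
|S_f(\alpha)|^{2^j} \le (2N)^{2^j - j - 1} \sum_{\bh} \sum_{x \in T_j(\bh)} e\bigl(\alpha \Delta_j(f(x); \bh)\bigr),
\]
where, by the formulas preceding Lemma \ref{lem::Weyl Differencing}, the inner polynomial $\Delta_j(f(x); \bh)$ has degree $4 - j$ in $x$ with leading coefficient $\frac{A}{(4-j)!} h_1 \cdots h_j$. Integrating over $\alpha \in [0,1]$ picks out the tuples with $\Delta_j(f(x); \bh) = 0$; these split into tuples with some $h_i = 0$ (for which $\Delta_j$ vanishes identically and each $\bh$ contributes at most $N$ values of $x$) and tuples with all $h_i \ne 0$ (for which $j \le 3$ gives at most $4 - j$ roots in $x$).

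Direct counting from this step alone yields only $\int |S_f|^{2^j} \ll N^{2^j - 1}$, short of the Hua-type target $N^{2^j - j}$ by a factor of $N^{j-1}$. To recover the sharp exponent, I would rewrite $|S_f(\alpha)|^2 = \sum_{|h| < N} U_h(\alpha)$ with $U_h(\alpha) = \sum_{x \in I_h} e(\alpha \phi_h(x))$ and $\phi_h(x) = f(x+h) - f(x)$ a polynomial in $x$ of degree $3$ with leading coefficient proportional to $h$, and relate $\int_0^1 |S_f|^{2^j} d\alpha$ to the number of tuples $(\bh, \bx)$ with $\sum_{i=1}^{2^{j-1}} \phi_{h_i}(x_i) = 0$. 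This count would be handled by iterated application of Nicolas's divisor estimate (Lemma \ref{lem::Explicit Divisor Bound}), combined with Lemma \ref{lem::Geometric Sum} to control the exponential sums and Lemma \ref{lem::Weyl's Inequality with Eta} to sum the resulting divisor contributions.

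The main obstacle is the $j = 4$ case, where $\Delta_4(f(x); \bh) = A h_1 h_2 h_3 h_4$ is a constant in $x$, so Weyl differencing gives no additional cancellation from the innermost sum. To circumvent this I would instead apply Lemma \ref{lem::Weyl Differencing} with only three iterations to produce a polynomial linear in $x$, then invoke Lemma \ref{lem::Geometric Sum} to conclude
\[
|S_f(\alpha)|^8 \le (2N)^4 \sum_{\bh_3} \min\bigl(N+1, \tfrac{1}{2}\|A \alpha h_1 h_2 h_3\|^{-1}\bigr),
\]
and bound $\int_0^1 |S_f(\alpha)|^{16}\, d\alpha = \int_0^1 \bigl(|S_f(\alpha)|^8\bigr)^2\, d\alpha$ by a Cauchy--Schwarz step combined with Lemmas \ref{lem::Explicit Divisor Bound} and \ref{lem::Weyl's Inequality with Eta}, keeping careful track of the explicit constants. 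The exponent $12.7932/\log\log N = 12 \cdot 1.0661/\log\log N$ in the final estimate reflects twelve applications of the divisor bound from Lemma \ref{lem::Explicit Divisor Bound}, matched to the order-of-magnitude exponent $2^4 - 4 = 12$ in the $j = 4$ case, which is why the same uniform $\epsilon$ handles every $1 \le j \le 4$.
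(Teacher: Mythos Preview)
Your treatment of the case $j=4$ has a real gap. After three Weyl differences you reach
\[
|S_f(\alpha)|^8 \le (2N)^4 \sum_{|h_1|,|h_2|,|h_3|<N} \min\bigl(N+1,\tfrac12\|A h_1 h_2 h_3 \alpha\|^{-1}\bigr),
\]
but squaring this pointwise bound and integrating (with or without Cauchy--Schwarz on the $\bh$-sum) loses a full factor of $N^3$: since $\int_0^1 \min(N,\|k\alpha\|^{-1})^2\,d\alpha \asymp N$ for each nonzero $k$, the resulting estimate for $\int_0^1|S_f|^{16}\,d\alpha$ is only of order $N^{15+o(1)}$, not the required $N^{12+o(1)}$. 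Lemma~\ref{lem::Weyl's Inequality with Eta} does not rescue this, as it bounds a sum over $x$ at a \emph{fixed} $\alpha$ with a given rational approximation, whereas here you are integrating over all of $[0,1]$.

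The missing idea is the inductive structure of Hua's lemma, which is exactly what the paper implements. One applies the three-step Weyl inequality to only \emph{one} of the two factors of $|S_f|^8$ in $\int_0^1 |S_f|^{16}\,d\alpha$, leaving the other as $S_f(\alpha)^4 S_f(-\alpha)^4$; orthogonality then converts the integral into the number of solutions of
\[
\sum_{i=1}^4 \bigl(f_{u_i}-f_{v_i}\bigr) = \Delta_3(f_n;\bh).
\]
The diagonal case $\Delta_3(f_n;\bh)=0$ is controlled by the already-proved $j=3$ bound on $\int_0^1|S_f|^8\,d\alpha$, and the off-diagonal case $\Delta_3=k\ne 0$ is handled by noting that $h_1h_2h_3 L(n;\bh)=k$ has at most $O(d(k)^3)$ solutions in $(h_1,h_2,h_3)$, each pinning down $n$. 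Since $|k|\le 4f_N\asymp N^4$, this is where the exponent $12.7932/\log\log N$ actually arises: it is $3\cdot 4\cdot 1.0661$, i.e.\ \emph{three} divisor functions evaluated at an argument of size $N^4$, not twelve divisor functions at size $N$. Your sketch for $j=2,3$ is too vague to evaluate, but the same scheme (one Weyl-differenced factor paired with an undifferenced factor, then a diagonal/off-diagonal split feeding back the $j-1$ case) is what is needed there as well.
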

\begin{proof}
The proof for $j=1,2,3$ is analogous to the proof of Lemma 4.2 in \cite{dong2024pollock}, with the only difference being that we need to count the number of solutions for a polynomial of degree $3$ instead of a quadratic polynomial. Therefore, we will omit the proof and only state the results for the cases $j = 1, 2, 3$. Specifically, we have
\begin{align*}
    \int_{0}^1 |S_f(\alpha)|^2 \dd \alpha &\leq N, \\ 
    \int_0^1 |S_f(\alpha)|^4 \dd \alpha &\leq 13 N^{2 + \frac{4.2644}{\log\log N}}, \\
    \int_0^1 |S_f(\alpha)|^8 \dd \alpha &\leq 328 N^{5 + \frac{8.5288}{\log\log N}}.
\end{align*}
Next, we consider the case when $j = 4$.  It follows from Lemma \ref{lem::Weyl Differencing} that 
\begin{align*}
   |S_f(\alpha)|^8 \leq (2N)^{4} \sum_{|h_1| < N} \sum_{|h_2| < N} \sum_{|h_3| < N} \sum_{n \in T_3(\bh)} e(\alpha \cdot \Delta_3(f_n; \bh)).
\end{align*}
Therefore,
\begin{align*}
  \int_{0}^1 |S_f(\alpha)|^{2^4} \dd\alpha = \int_0^{1} S_f(\alpha)^4 S_f(-\alpha)^4 |S_f(\alpha)|^8 \dd \alpha \leq (2N)^4 \cdot W,  
\end{align*}
where 
\begin{equation*}
    W = \sum_{|h_1| < N} \sum_{|h_2| < N} \sum_{|h_3| < N} \sum_{n \in T_3(\bh)} \int_0^1  S_f(\alpha)^4 S_f(-\alpha)^4 e(\alpha \cdot \Delta_3(f_n; \bh))\dd\alpha,
\end{equation*}
and $T_3(\bh)$ is a suitable subinterval of $[1,N]$. By orthogonality, the expression $W$ is bounded above by the number of integral solutions to the equation 
\begin{align*}
    \sum_{i = 1}^4 (f_{u_i} - f_{v_i}) = \Delta_3(f_n;\bh),
\end{align*}
where $1 \leq u_i, v_i \leq N$ for all $1 \leq i \leq 4$, $1 \leq n \leq N$ and $|h_j| < N$ for all $1 \leq j \leq 3$. We now bound $W$ by dividing into two cases: $\Delta_3(f_n; h) = 0$ and $\Delta_3(f_n; h) \neq 0$. 

The first case is $\sum_{i = 1}^4 (f_{u_i} - f_{v_i}) = 0$, which implies $\Delta_1(f_n;h) = 0$. By orthogonality, the number of choices for $u_i$ and $v_i$ ($1 \leq i \leq 4$) in this case is 
\[ \int_0^1 S_f(\alpha)^4 S_f(-\alpha)^4 \dd \alpha = \int_0^1 |S_f(\alpha)|^8 \dd\alpha \leq 328 N^{5 + \frac{8.5288}{\log\log N}}.\]
On the other hand, we have $\Delta_3(f_n;\bh) = h_1h_2h_3 L(n;h)$, where $L(n;h)$ is a linear polynomial in $n$, determined by the choices of $h_1, h_2$ and $h_3$. So, either $h_1h_2h_3 = 0$, or $n$ is a zero of $L$. Therefore, the total number of choices for $n$ and $\bh$ is at most $20N^3$. Hence, the contribution in this case to $W$ is bounded above by 
\[
6560 N^{8 + \frac{1.0661}{\log\log N}}.
\]
For the second case, we write $\sum_{i = 1}^4 (f_{u_i} - f_{v_i}) = k$ for some non-zero integer $k$ with $|k| \leq 4 f_N$. For each such choice of $u_i$ and $v_i$ ($i = 1, \dots, 4$), we have $h_1h_2h_3 L(n;\bh) = k$ and thus there are at most $8 d(k)^3$ choices for $\bh$. For each such choice of $\bh$, there is at most one choice for $n$. This shows that the total choices for $n$ and $\bh$ is bounded above by 
\[ 8 d(k)^3 \leq 8 N^{\frac{12.7932}{\log\log N}}.\]
Since there are at most $N^8$ choices for $u_i$ and $v_i$ with $1 \leq i \leq 4$, the contribution to $W$ in this case is at most 
\[
8N^{8 + \frac{12.7932}{\log\log N}}.
\]
Thus, we obtain 
\begin{align*}
\int_0^1 |S_f(\alpha)|^{16} \dd \alpha \leq (2N)^4 W \leq 16N^4 \left(6560 N^{8 + \frac{8.5288}{\log\log N}} + 8 N^{8 + \frac{12.7932}{\log\log N}}\right) \leq  10^6 N^{12 + \frac{12.7932}{\log\log N}},  
\end{align*}
as desired. 
\end{proof}

We are now ready to prove Theorem \ref{thm::minor-arc}. 
\begin{proof}[Proof of Theorem \ref{thm::minor-arc}]
We can write 
\begin{equation*}
    \left| \int_{\fm} S_f(\alpha)^s e(-m \alpha) \dd\alpha\right| \leq \left(\sup_{\alpha \in \fm} |S_f(\alpha)|\right)^{s-16} \int_{0}^1 |S_f(\alpha)|^{16} \dd\alpha.  
\end{equation*}
Consider an arbitrary point $\alpha$ of $\mathfrak{m}$. By Dirichlet's Theorem (see \cite[Lemma 2.1]{vaughan1997hardy}), there exist $a, q$ with $(a, q)=1$ and $q \leq N^{3-\delta}$ such that $|\alpha-a / q| \leq q^{-1} N^{\delta-3}$. Since $\alpha \in \mathfrak{m} \subset\left(N^{\delta-3}, 1-N^{\delta-3}\right)$ it follows that $1 \leq a \leq q$. Therefore, $q>N^\delta$, for otherwise $\alpha$ would lie in $\mathfrak{M}$. We now apply Lemma \ref{lem::upper-F(a)} with $\eta = 24A$. Then we have
\begin{align*}
|S_f(\alpha)| &\leq 2 N^{7/8} + 5 \cdot (24A)^{1/8} N^{1 + \frac{3.1983}{4\log\log N}} \left(q^{-1} + N^{-1} + qN^{-3}\right)^{1/8} (\log N)^{1/8} \\ 
    &\leq 2 N^{7/8}  + 5 \cdot (72A)^{1/8} N^{1 - \frac{\delta}{8} + \frac{3.1983}{4\log\log N}} (\log N)^{1/8} \\ 
    &\leq 11A^{1/8} N^{1 - \frac{\delta}{8} + \frac{3.1983}{4\log\log N}} (\log N)^{1/8}.
\end{align*}
Additionally, by Lemma \ref{lem:upper-int.S(alpha)}, we have 
\begin{equation*}
    \int_{0}^1 |S_f(\alpha)|^{16} \dd \alpha \leq 10^6 N^{12 + \frac{12.7932}{\log\log N}}.
\end{equation*}
It follows that 
\begin{align*}
    \left|\int_{\fm} S_f(\alpha)^s e(-m \alpha) \dd\alpha \right| &\leq 10^6 \left(11A^{1/8} N^{1-\frac{\delta}{8} + \frac{3.1983}{4\log\log N}}\right)^{s-16} (\log N)^{\frac{s-16}{8}} N^{12 + \frac{12.7932}{\log\log N}} \\ 
    &\leq 10^6 \cdot 11^{s-16} A^{\frac{s-16}{8}}(\log N)^{\frac{s-16}{8}} N^{s-4 - \frac{\delta(s-16)}{8} + \frac{s}{\log\log N}},
\end{align*}
which completes the proof of Theorem \ref{thm::minor-arc}.
\end{proof}

\section{Major Arcs}\label{sec: major arcs}
In this section, our primary objective is to effectively approximate $S_f(\alpha)$ for $\alpha \in \mathfrak{M}$. Let $\theta = \alpha-a/q$. For convenience, we extend the definition in \eqref{defn: f(n)} by writing $f_t$ for $t \in \mathbb{R}$. More precisely, we define
\begin{align*}
    f_t &= A \binom{t}{4} + B \binom{t}{3} + C \binom{t}{2} + t\\
    &= \frac{At^4}{24}-\frac{At^3}{4}+\frac{11At^2}{24}-\frac{At}{4}+\frac{Bt^3}{6}-\frac{Bt^2}{2}+\frac{Bt}{3}+\frac{Ct^2}{2}-\frac{Ct}{2}+t\\
    \textrm{and} \quad\quad\quad&\\
    f_t' &= \frac{A(2t^3-9t^2+11t-3)}{12}+\frac{B(3t^2-6t+2)}{6}+\frac{C(2t-1)}{2}+1,
\end{align*}
for $t \in \mathbb{R}.$ Define 
\begin{align}
M(t) &:=  \sum_{1\leq n\leq t} e\bigg (\frac{a}{q}f_n \bigg), \label{Definition M(t)} \\
\textrm{and} \quad V(q,a) &:= \sum_{1\leq n\leq 24q} e\bigg (\frac{a}{q}f_n \bigg) \label{Definition V(q,a)}.
\end{align} 
To this end, we aim to prove the following lemma. 
\begin{lem}\label{Approx 4}
Let $N \geq  3A+2|B|$. Then
\[
\bigg \lvert \int_{\mathfrak{M}}S_f(\alpha)^s e(-\alpha m) \dd\alpha -\int_{\mathfrak{M}} \bigg (\frac{V(q,a)}{24q}\int_1^N e\bigg( \frac{At^4 \theta}{24} \bigg) \dd t \bigg)^s  e(-\alpha m) \dd\alpha \bigg \rvert \leq 70(A+4|B|)\pi s N^{5\delta+s-5}.
\]
\end{lem}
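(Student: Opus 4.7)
My plan is first to establish the pointwise approximation
\[
\bigg|S_f(\alpha)-\frac{V(q,a)}{24q}\int_1^N e\bigg(\frac{At^4\theta}{24}\bigg)\dd t\bigg|\leq 35(A+4|B|)\pi N^{2\delta}
\]
uniformly for $\alpha=a/q+\theta\in\fM(q,a)$, and then to lift it to the integral estimate using $\mathrm{meas}(\fM)\leq 2P^2N^{\delta-4}=2N^{3\delta-4}$, the trivial bounds $|S_f(\alpha)|\leq N$ and $|V(q,a)/(24q)\int_1^N e(At^4\theta/24)\dd t|\leq N$, and the algebraic inequality $|u^s-v^s|\leq s\max(|u|,|v|)^{s-1}|u-v|\leq sN^{s-1}|u-v|$. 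These combine to give
\[
\mathrm{meas}(\fM)\cdot sN^{s-1}\cdot 35(A+4|B|)\pi N^{2\delta}= 70(A+4|B|)\pi sN^{5\delta+s-5},
\]
which is the asserted bound.

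For the pointwise estimate, the first step is Abel summation against the weight $M(t)$ from \eqref{Definition M(t)}. Using $e(\theta f_{n+1})-e(\theta f_n)=\int_n^{n+1}2\pi i\theta f_t'e(\theta f_t)\dd t$, I obtain
\[
S_f(\alpha)=M(N)e(\theta f_N)-2\pi i\theta\int_1^N M(\lfloor t\rfloor)f_t'e(\theta f_t)\dd t.
\]
Since $24f_n\in\Z[n]$ and $\gcd(a,q)=1$, the sequence $n\mapsto e(af_n/q)$ is periodic with period dividing $24q$, so $M(\lfloor t\rfloor)=tV(q,a)/(24q)+E(t)$ with $|E(t)|\leq 48q$. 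Substituting this decomposition and applying the reverse integration-by-parts identity $Ne(\theta f_N)-2\pi i\theta\int_1^N tf_t'e(\theta f_t)\dd t=\int_1^N e(\theta f_t)\dd t+e(\theta f_1)$ (obtained by integrating $\tfrac{d}{dt}[te(\theta f_t)]$) yields
\[
S_f(\alpha)=\frac{V(q,a)}{24q}\int_1^N e(\theta f_t)\dd t+R_1(\alpha),
\]
where $R_1(\alpha)$ collects $\frac{V(q,a)}{24q}e(\theta f_1)$, $E(N)e(\theta f_N)$, and $-2\pi i\theta\int_1^N E(t)f_t'e(\theta f_t)\dd t$. Using $|E(t)|\leq 48q$, $q\leq P=N^\delta$, $|\theta|\leq N^{\delta-4}$, and an explicit bound $\int_1^N|f_t'|\dd t\leq (A+4|B|)N^4/c$ derived from the formula for $f_t'$ (with the hypothesis $N\geq 3A+2|B|$ guaranteeing monotonicity of $f$ on $[1,N]$, so that $\int_1^N|f_t'|\dd t = f_N - f_1$, and ensuring the $|C|$- and lower-order terms are absorbed cleanly into the $(A+4|B|)$-coefficient), I obtain $|R_1(\alpha)|\ll(A+4|B|)N^{2\delta}$. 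The final step of the pointwise estimate replaces $e(\theta f_t)$ by $e(A\theta t^4/24)$ inside the integral via $|e(x)-e(y)|\leq 2\pi|x-y|$, incurring an error at most $2\pi|\theta|\int_1^N|f_t-At^4/24|\dd t\ll(A+|B|+|C|)N^\delta$, which is strictly smaller than $R_1(\alpha)$ in the worst case $q\asymp N^\delta$.

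The main obstacle is the explicit bookkeeping of constants: turning the informal $\ll$ into the precise numerical coefficient $70\pi$ and producing the specific combination $A+4|B|$ (rather than $A+|B|+|C|$) in the final bound requires a careful estimate of $\int_1^N|f_t'|\dd t$. The hypothesis $N\geq 3A+2|B|$ is calibrated so that $f$ is monotone on $[1,N]$ and so that the $|C|\,N^2$- and $(|B|+1)N$-contributions to $f_N-f_1$ are dominated by $(A+4|B|)N^4/24$ in the right proportion; this is where the most delicate arithmetic of the proof will live.
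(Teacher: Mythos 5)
Your proposal follows essentially the same route as the paper: Abel summation against $M(t)$, the periodicity $f_n\equiv f_{n+24q}\pmod q$ giving $|M(t)-\tfrac{V(q,a)}{24q}t|\ll q$, the reverse integration-by-parts identity to remove the boundary term and $tf_t'$ factor, and finally the polynomial approximation $e(\theta f_t)\to e(A\theta t^4/24)$, with the paper's Lemmas 4.4--4.6 playing precisely the roles of your $E(t)$-decomposition and $R_1(\alpha)$ error collection. The only difference is that you bound the pointwise error by its supremum over $\fM$ and multiply by $\mathrm{meas}(\fM)$, whereas the paper integrates the $(q,\theta)$-dependent error over each arc and then sums over $(q,a)$ pairs -- both give the same power $N^{5\delta+s-5}$, and your deferred constant bookkeeping (which you flag as the remaining work, and which the paper carries out via the explicit bound $|M(t)-\tfrac{V(q,a)}{24q}t|\leq 24q$ and $\int_1^N f_t'\,\mathrm{d}t\leq\tfrac{A+4|B|}{24}N^4$) would indeed recover $70\pi$.
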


To begin with, since $\mathfrak{M}(q, a)$ are pairwise disjoint, we have
\begin{align}\label{Major Arc Disjointness}
\int_{\mathfrak{M}} S_f(\alpha)^s e(-\alpha m)d\alpha= \sum_{q\leq N^\delta}\sum\limits_{\substack{a=1 \\ (a,q)=1}}^q\int_{\mathfrak{M}(q,a)} S_f(\alpha)^se(-\alpha m) \dd\alpha.
\end{align}

Applying partial summation, we obtain the following lemma.
\begin{lem}\label{Partial Summation}
Let $\alpha \in \mathfrak{M}(q,a)$ and $\theta = \alpha - a/q$. Then 
\begin{align}
 S_f(\alpha) &=M(N)e(\theta f_N)-2\pi i\theta\int_{1}^N M(t) f_t'e(\theta f_t) \dd t.
\end{align}
\end{lem}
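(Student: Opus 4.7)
The plan is a direct application of Abel summation (partial summation). First I would write $\alpha = a/q + \theta$ so that
\[
S_f(\alpha) = \sum_{n=1}^N e\!\left(\tfrac{a}{q}f_n\right) e(\theta f_n),
\]
identifying $a_n := e(\tfrac{a}{q}f_n)$ as the arithmetic weight whose partial sums are exactly $M(t)$, and $g(t) := e(\theta f_t)$ as the smooth factor. The polynomial $f_t$ is defined for real $t$ in the setup preceding the lemma, so $g$ is differentiable on $[1,N]$ with $g'(t) = 2\pi i \theta f_t' e(\theta f_t)$.

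Next I would invoke the standard Abel summation identity
\[
\sum_{n=1}^N a_n g(n) = M(N)g(N) - \int_1^N M(t)\, g'(t)\, dt,
\]
which is immediate from writing $a_n = M(n) - M(n-1)$ and telescoping, together with the fact that $M(t)$ is a step function constant on each interval $[n,n+1)$ (so that $\int_n^{n+1} M(t) g'(t)\,dt = M(n)(g(n+1) - g(n))$). Substituting $M(N)g(N) = M(N)e(\theta f_N)$ and the explicit formula for $g'(t)$ yields
\[
S_f(\alpha) = M(N)e(\theta f_N) - 2\pi i \theta \int_1^N M(t) f_t' e(\theta f_t)\, dt,
\]
which is the stated identity.

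There is essentially no obstacle: the only small point worth checking is the boundary value at $t=1$, but $M(1) \cdot g(1)$ does not appear because the lower limit in the Abel identity contributes $M(0)g(1) = 0$ (as $M(0) = 0$, the empty sum). The hypothesis $\alpha \in \mathfrak{M}(q,a)$ is used only to guarantee that the decomposition $\alpha = a/q + \theta$ is meaningful with the specific $a,q$; no size information on $\theta$ is needed at this stage. Accordingly, the proof is a one-line application of partial summation after the correct identification of the summand as a product of an arithmetic and a smooth factor.
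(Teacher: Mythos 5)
Your proof is correct and is essentially the same argument the paper has in mind: the paper states the lemma immediately after the phrase ``Applying partial summation, we obtain the following lemma,'' with no further detail, and your write-up simply fills in that step via the standard Abel summation identity with $a_n = e(\tfrac{a}{q}f_n)$, $M(t)=\sum_{n\le t}a_n$, and $g(t)=e(\theta f_t)$, correctly using the extension of $f_t$ to real $t$ introduced just before the lemma. The observation that $M(0)=0$ kills the lower boundary term is the right way to handle the $t=1$ endpoint.
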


\begin{lem}\label{Congruence}
For any $q$, $f_n\equiv f_{n+24q}$ mod $q$.
\end{lem}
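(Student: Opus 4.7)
\bigskip

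\noindent\textbf{Proof plan for Lemma \ref{Congruence}.}

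The plan is to reduce the claim to a termwise divisibility statement about the binomial-coefficient basis. Since $f_n$ is written in \eqref{defn: f(n)} as
\[
f_n = A\binom{n}{4} + B\binom{n}{3} + C\binom{n}{2} + n,
\]
with $A, B, C \in \Z$, it suffices to prove that for each $k \in \{1,2,3,4\}$, the integer
\[
\binom{n+24q}{k} - \binom{n}{k}
\]
is divisible by $q$; then summing these divisibilities with the integer weights $A, B, C, 1$ yields $q \mid f_{n+24q} - f_n$.

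For the termwise step I would invoke the standard fact that for any integer-coefficient polynomial $P(x)$ and any integers $a, b$, the difference $P(a+b) - P(a)$ is divisible by $b$ (because $(a+b)^j - a^j$ is divisible by $b$ for every $j \geq 0$). Apply this with $b = 24q$ and with $P(x) = k!\binom{x}{k} = x(x-1)\cdots(x-k+1)$, which has integer coefficients. We obtain
\[
24q \,\bigm|\, k!\binom{n+24q}{k} - k!\binom{n}{k}.
\]
Because $k!$ divides $24$ for every $k \in \{1,2,3,4\}$, we may divide both sides by $k!$ to conclude that $\frac{24q}{k!}$, and therefore $q$, divides $\binom{n+24q}{k} - \binom{n}{k}$, as required.

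The argument has essentially no obstacle: the only mild point is that $f$ is an integer-valued but not integer-coefficient polynomial in $n$, so one cannot directly quote $q \mid f_{n+24q} - f_n$ from the ordinary polynomial-difference identity. The role of the shift $24q$, as opposed to merely $q$, is exactly to absorb the denominators $k!$ appearing in each binomial coefficient for $k \leq 4$, and this is precisely why the statement is phrased with the shift $24q$.
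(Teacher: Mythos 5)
Your proof is correct, and it is essentially the same argument the paper gives (the paper's one-line proof just observes that the least common multiple of the denominators of $f_n$ is $24$); you have simply spelled out the routine details of why the shift by $24q$ absorbs those denominators.
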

\begin{proof}
    The proof follows easily by noticing that the least common multiple of the denominators for $f_n$ is $24$.
\end{proof}

\begin{lem}\label{Approx 1}
For all $1 \leq t \leq N$, with $M(t)$ defined as in \eqref{Definition M(t)}, we have
\begin{align*}
   \bigg \lvert M(t) - \frac{V(q,a)}{24q} t \bigg \rvert \leq 24q.
\end{align*}
\end{lem}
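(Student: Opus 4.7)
The plan is to exploit the mod-$q$ periodicity established in Lemma \ref{Congruence}. Since $f_{n+24q}\equiv f_n \pmod{q}$, the summand $e(af_n/q)$ is periodic in $n$ with period $24q$. Consequently, the partial sums $A_N := \sum_{n=1}^N e(af_n/q)$ satisfy $A_{N+24q} = A_N + V(q,a)$, so the ``discrepancy'' $A_N - \frac{V(q,a)}{24q}N$ is a periodic function of the non-negative integer $N$ with period $24q$. It therefore suffices to bound this discrepancy on a single period, plus a small correction coming from the fractional part of $t$.

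Concretely, I would write $\lfloor t\rfloor = 24qk + r$ with $0\leq r\leq 24q-1$ and $k\geq 0$. By periodicity,
\[ M(t) = A_{\lfloor t\rfloor} = kV(q,a) + \sum_{n=1}^r e\!\left(\tfrac{af_n}{q}\right), \]
while $\tfrac{V(q,a)}{24q}\,t = kV(q,a) + \tfrac{r+\{t\}}{24q}V(q,a)$, where $\{t\} = t - \lfloor t \rfloor$. The $kV(q,a)$ terms cancel, leaving
\[ M(t) - \tfrac{V(q,a)}{24q}\,t \;=\; \sum_{n=1}^r e\!\left(\tfrac{af_n}{q}\right) \;-\; \tfrac{r+\{t\}}{24q}V(q,a). \]
A naive application of the triangle inequality at this point would give only a bound of order $48q$, since $|V(q,a)|\leq 24q$ and $r < 24q$.

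The key step is to split $V(q,a) = \sum_{n=1}^r e(af_n/q) + \sum_{n=r+1}^{24q} e(af_n/q)$ \emph{before} bounding, which recasts the difference as
\[ \tfrac{1}{24q}\Bigl[(24q - r - \{t\})\sum_{n=1}^{r} e(af_n/q) \;-\; (r+\{t\})\sum_{n=r+1}^{24q} e(af_n/q)\Bigr]. \]
Since each inner sum has modulus at most its number of terms, the triangle inequality and the identity $(24q-r-\{t\})r+(r+\{t\})(24q-r) = 2r(24q-r)+\{t\}(24q-2r)$ yield
\[ \Bigl|M(t)-\tfrac{V(q,a)}{24q}\,t\Bigr| \;\leq\; \tfrac{2r(24q-r) + \{t\}(24q-2r)}{24q} \;\leq\; 12q + 1, \]
using $r(24q-r)\leq (12q)^2$ and $|\{t\}(24q-2r)|\leq 24q$. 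For $q\geq 1$ this is at most $24q$, as claimed. I do not anticipate any real obstacle: the entire argument rests on recognizing the period-$24q$ structure and the one-line algebraic rearrangement that effectively halves the trivial bound; the remainder is routine bookkeeping between $t$, $\lfloor t\rfloor$, and $\{t\}$.
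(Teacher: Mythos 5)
Your proof is correct. The periodicity $e\!\left(\tfrac{a}{q}f_{n+24q}\right)=e\!\left(\tfrac{a}{q}f_{n}\right)$ (from Lemma \ref{Congruence}) is exactly the mechanism underlying the paper's cited argument, and your algebraic rearrangement yields the even sharper bound $12q+1 \leq 24q$. The decomposition $V(q,a)=S_1+S_2$, the identity for the numerator, and the estimates $r(24q-r)\leq(12q)^2$, $|\{t\}(24q-2r)|\leq 24q$ all check out, as does the nonnegativity of the final numerator.

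That said, you have made the problem slightly harder than it needs to be. The paper (via \cite{dong2024pollock}) relies on the standard "count residue classes" route, which reaches the bound $24q$ in one step without any rearrangement: for each residue $j\in\{1,\dots,24q\}$, the number of integers $n\in[1,t]$ with $n\equiv j\ (\mathrm{mod}\ 24q)$ differs from $t/(24q)$ by a quantity $\theta_j$ with $|\theta_j|<1$ (it equals $\lfloor(t-j)/(24q)\rfloor+1$ when $j\leq t$, and one computes $\lfloor(t-j)/(24q)\rfloor+1-\tfrac{t}{24q}=\bigl(1-\tfrac{j}{24q}\bigr)-\{(t-j)/(24q)\}\in(-1,1)$). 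Then
\[
M(t)-\frac{V(q,a)}{24q}\,t=\sum_{j=1}^{24q}e\!\left(\tfrac{a}{q}f_j\right)\theta_j,
\]
and the triangle inequality immediately gives $\leq 24q$. Your version, which first writes the discrepancy as $S_1-\frac{r+\{t\}}{24q}V(q,a)$, only sees a naive bound of about $48q$, which is why you then need the clever split $V(q,a)=S_1+S_2$ to recover a usable estimate. Both routes are valid; yours buys a constant factor improvement the paper does not use, at the cost of a less transparent argument. Either way the lemma is established.
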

\begin{proof}
    The arguments follows similarly to Lemma 5.3 in \cite{dong2024pollock}, with $2q$ replaced by $24q$.
\end{proof}
\begin{lem}\label{Approx 2}
Let $\alpha \in \mathfrak{M}(q,a)$ and $\theta = \alpha - a/q$.
Then
\[
\bigg \lvert S_f(\alpha)-\frac{V(q,a)}{24q}\int_1^N e(\theta f_t)\dd t \bigg \rvert \leq 24q+1+(2A+8|B|)q\pi\theta N^4.
\]
\end{lem}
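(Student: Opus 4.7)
The plan is to combine Lemma \ref{Partial Summation} with the pointwise approximation of Lemma \ref{Approx 1} and then use integration by parts to match the resulting main term with the target integral $\int_1^N e(\theta f_t)\,\dd t$. First, writing $M(t) = \frac{V(q,a)}{24q}\, t + E(t)$ with $|E(t)| \leq 24q$ (by Lemma \ref{Approx 1}) and inserting this into the identity of Lemma \ref{Partial Summation}, one decomposes $S_f(\alpha)$ as a main piece
\[
\frac{V(q,a)}{24q}\Bigl(N e(\theta f_N) - 2\pi i \theta \int_1^N t\, f_t'\, e(\theta f_t)\,\dd t\Bigr)
\]
plus an error piece $E(N)\, e(\theta f_N) - 2\pi i \theta \int_1^N E(t)\, f_t'\, e(\theta f_t)\,\dd t$.

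Next, I would simplify the main piece via integration by parts. Since $f_t'\, e(\theta f_t)\,\dd t = \frac{1}{2\pi i \theta}\,\dd\bigl[e(\theta f_t)\bigr]$, taking $u = t$ yields
\[
\int_1^N t\, f_t'\, e(\theta f_t)\,\dd t = \frac{1}{2\pi i \theta}\Bigl[N e(\theta f_N) - e(\theta f_1) - \int_1^N e(\theta f_t)\,\dd t\Bigr].
\]
Substituting back, the $N e(\theta f_N)$ boundary term cancels and the main piece reduces to $\frac{V(q,a)}{24q}\bigl[e(\theta f_1) + \int_1^N e(\theta f_t)\,\dd t\bigr]$. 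Since $f_1 = f(1) = 1$ by \eqref{defn: f(n)}, the stray term $\frac{V(q,a)}{24q}\, e(\theta)$ has modulus at most $1$ (as $|V(q,a)| \leq 24q$), which accounts for the ``$+1$'' in the target bound.

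Finally, I would estimate the two error contributions. The boundary error satisfies $|E(N)\, e(\theta f_N)| \leq 24q$, supplying the ``$24q$'' summand. For the integral error, the triangle inequality gives
\[
\Bigl|2\pi\theta \int_1^N E(t)\, f_t'\, e(\theta f_t)\,\dd t\Bigr| \leq 48\pi q\, |\theta| \int_1^N |f_t'|\,\dd t,
\]
and the main obstacle is to control $\int_1^N |f_t'|\,\dd t$ sharply enough to yield the coefficient $(2A + 8|B|)$. Using the explicit cubic expression for $f_t'$ given in the setup of Section \ref{sec: major arcs} (with leading coefficient $A/6$) and integrating termwise, the leading contributions produce $\tfrac{A}{24} N^4$ and $\tfrac{|B|}{6} N^3$, while the lower-order terms in $t^k$ for $k \leq 2$ can each be dominated by a small multiple of $N^4$ and absorbed into the $\tfrac{|B|}{6}N^4$ allowance. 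Multiplying the resulting bound $\bigl(\tfrac{A}{24} + \tfrac{|B|}{6}\bigr)N^4$ by $48\pi q|\theta|$ produces the last summand $(2A + 8|B|)\pi q|\theta| N^4$, and combining the three bounds yields the lemma.
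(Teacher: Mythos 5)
Your argument follows essentially the same path as the paper: decompose $M(t)$ via Lemma \ref{Approx 1}, insert into Lemma \ref{Partial Summation}, integrate by parts so that the main boundary term cancels and the stray $\frac{V(q,a)}{24q}e(\theta f_1)=\frac{V(q,a)}{24q}e(\theta)$ supplies the $+1$, then bound the two error pieces by $24q$ and $48\pi q|\theta|\int_1^N|f_t'|\,\dd t$. The paper takes the identical route (its displayed bound $48\pi\theta\int_1^N qf_t'\,\dd t\leq(2A+8|B|)q\pi\theta N^4$ also rests on the same termwise estimate of $f_t'$ that you sketch), so there is no substantive divergence.
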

\begin{proof}
By applying Lemma \ref{Approx 1}, we obtain the bounds
\begin{align*}
    \bigg \lvert M(N)e(\theta f_N)-\frac{V(q,a)}{24q} N e(\theta f_N)\bigg \rvert 
   &\leq 24q,\\
\bigg \lvert 2\pi i\theta \int_{1}^NA(t)f_t'e(\theta f_t)\dd t-2\pi i\theta\int_{1}^N\frac{V(q,a)}{24q}tf_t'e(\theta f_t)\dd t \bigg \rvert &\leq 48\pi\theta\int_{1}^N q f_t'\dd t \leq (2A+8|B|)q\pi\theta N^4.
\end{align*}
Thus, by Lemma \ref{Partial Summation} and the triangle inequality, we have
\begin{align} \label{Approx 2 Step 1}
\bigg \lvert S_f(\alpha)-\frac{V(q,a)}{24q}Ne(\theta f_N)+2\pi i\theta\int_{1}^N&\frac{V(q,a)}{24q}t f_t'e(\theta f_t)\dd t \bigg \rvert \leq 24q+(2A+8|B|)q\pi\theta N^4.
\end{align}
Applying integration by parts, we find
\begin{align}
\frac{V(q,a)}{24q} &N e(\theta f_N)-2\pi i\theta\int_{1}^N\frac{V(q,a)}{24q}tf_t'e(\theta f_t) \dd t \notag \\
&=\frac{V(q,a)}{24q}e(\theta)+\frac{V(q,a)}{24q}\int_1^N e(\theta f_t) \dd t. \label{Approx 2 Step 2}
\end{align}
Finally, combining \eqref{Approx 2 Step 1} and \eqref{Approx 2 Step 2}, and trivially bounding $V(q,a)$ complete the proof.
\end{proof}
\begin{lem}\label{Approx 3}
Let $N \geq  6A+4|B|, \alpha \in \mathfrak{M}(q,a)$ and $\theta = \alpha - a/q$. Then
\[
\bigg \lvert S_f(\alpha)-\frac{V(q,a)}{24q}\int_1^N e\bigg( \frac{At^4 \theta}{24} \bigg) \dd t \bigg \rvert \leq 24q+1+(2A+8|B|)q\pi\theta N^4+\frac{3A+2|B|}{3}\pi N^\delta.
\]    
\end{lem}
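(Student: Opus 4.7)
My plan is to leverage Lemma \ref{Approx 2}, which already accounts for the first three terms in the desired bound, and to absorb the extra error from replacing $e(\theta f_t)$ with $e(\theta A t^4/24)$ inside the integral. By the triangle inequality it suffices to prove
\[
\frac{|V(q,a)|}{24q}\left|\int_1^N \left[e(\theta f_t) - e\!\left(\frac{A t^4 \theta}{24}\right)\right] \dd t\right| \leq \frac{3A + 2|B|}{3}\,\pi N^{\delta}.
\]
Using the trivial bound $|V(q,a)| \leq 24q$ to eliminate the prefactor and the Lipschitz-type inequality $|e(x) - e(y)| \leq 2\pi|x - y|$ pointwise in $t$, I would reduce the left-hand side to $2\pi|\theta|\int_1^N |h(t)|\,\dd t$ with $h(t) := f_t - At^4/24$.

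The central step is then the explicit integration of $|h(t)|$. Reading the expansion of $f_t$ from the start of Section \ref{sec: major arcs}, $h$ is the cubic polynomial
\[
h(t) = \left(\frac{B}{6} - \frac{A}{4}\right) t^3 + \left(\frac{11A}{24} - \frac{B}{2} + \frac{C}{2}\right) t^2 + \left(-\frac{A}{4} + \frac{B}{3} - \frac{C}{2} + 1\right) t,
\]
whose leading coefficient has absolute value at most $(3A + 2|B|)/12$. Integrating term by term on $[1, N]$ and using $|\theta| \leq N^{\delta - 4}$ shows that the $t^3$-contribution already yields $\pi N^\delta (3A + 2|B|)/24$, matching the main order of the target bound. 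The $t^2$ and $t$ contributions are smaller by factors of $1/N$ and $1/N^2$ respectively, and the hypothesis $N \geq 6A + 4|B|$ (together with the paper's global assumption $|C| \leq e^{e^{10}}$) gives enough room to absorb them into the slack between $(3A + 2|B|)/24$ and $(3A + 2|B|)/3$.

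The chief obstacle I expect is the careful constant-tracking in the sub-leading contributions: the $t^2$-coefficient of $h$ involves $|C|$, which is not directly controlled by the hypothesis $N \geq 6A + 4|B|$. Invoking the global coefficient bound $|C| \leq e^{e^{10}}$ together with the lower bound on $N$ should close this gap with room to spare. Combining the resulting integral estimate with Lemma \ref{Approx 2} via the triangle inequality then produces the claimed bound.
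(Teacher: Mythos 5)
Your proposal is correct and follows essentially the same strategy as the paper's proof: invoke Lemma \ref{Approx 2}, absorb the prefactor $V(q,a)/(24q)$ trivially, apply a Lipschitz estimate for $e(\cdot)$ to reduce the task to bounding $2\pi|\theta|\int_1^N |h(t)|\,\dd t$ with $h(t) = f_t - At^4/24$, and then estimate the resulting polynomial integral. Your term-by-term integration of $|h(t)|$ (rather than the paper's uniform pointwise bound on $|h(t)|$ multiplied by the interval length) produces a leading constant $\frac{3A+2|B|}{24}\pi N^{\delta}$ with a factor-of-$8$ slack against the target, and you correctly flag that this slack is what absorbs the $C$-dependent lower-order terms --- terms the paper's intermediate pointwise estimate $|2\pi\theta\,h(t)| \leq \frac{3A+2|B|}{6}\pi N^{\delta-1}$ silently discards.
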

\begin{proof}
By Lemma \ref{Approx 2}, it suffices to show that
\[
\bigg \lvert \int_1^Ne(\theta f_t) \dd t-\int_1^N e\bigg( \frac{At^4 \theta}{24} \bigg) \dd t \bigg \rvert \leq \frac{3A+2|B|}{3}\pi N^\delta.
\]
We write
\begin{align}
\bigg \lvert &\int_1^Ne(\theta f_t) \dd t-\int_1^N e\bigg( \frac{At^4 \theta}{24} \bigg) \dd t \bigg \rvert\leq\int_1^N \left\lvert e\left(\theta\left(f_t-\frac{At^4 \theta}{24}\right)\right)-1\right\rvert dt\notag\\ 
&\quad= \int_1^N \bigg \lvert e\bigg(\theta\left(-\frac{At^3}{4}+\frac{11At^2}{24}-\frac{At}{4}+\frac{t^3B}{6}-\frac{t^2B}{2}+\frac{tB}{3}+\frac{t^2C}{2}-\frac{tC}{2}+t\right)\bigg)-1\bigg \rvert \dd t.\label{Approx 3 Step 1}
\end{align}
Since $|\theta|\leq N^{\delta-4}$ and $1 \leq t \leq N,$ we have
\[
\bigg \lvert 2\pi\theta \left(-\frac{At^3}{4}+\frac{11At^2}{24}-\frac{At}{4}+\frac{t^3B}{6}-\frac{t^2B}{2}+\frac{tB}{3}+\frac{t^2C}{2}-\frac{tC}{2}+t\right) \bigg \rvert \leq \frac{3A+2|B|}{6}\pi N^{\delta-1} \leq 1.
\]
Therefore, by the Taylor expansion of $e(x)=\exp(2\pi i x)$, we obtain
\begin{align}
\left\lvert e\left(\theta\left(f_t-\frac{At^4 \theta}{24}\right)\right)-1\right\rvert  &\leq \sum_{n=1}^\infty \frac{\left|2\pi\theta\left(f_t-\frac{At^4 \theta}{24}\right)\right|}{n!} \leq \bigg \lvert 4\pi\theta\bigg(f_t-\frac{At^4 \theta}{24}\bigg) \bigg \rvert \leq \frac{3A+2|B|}{3}\pi N^{\delta-1} . \label{Approx 3 Step 2}
\end{align}
Substituting \eqref{Approx 3 Step 2} into \eqref{Approx 3 Step 1} and integrating over $t$, we obtain the desired result.
\end{proof}

Now we are ready to prove the main lemma in this section, which is Lemma \ref{Approx 4}.
\begin{proof}[Proof of Lemma \ref{Approx 4}]
By the Binomial theorem and Lemma \ref{Approx 3}, we see that 
\begin{align*}
\bigg \lvert S_f(\alpha)^s-\bigg (\frac{V(q,a)}{24q}\int_1^N e\bigg( \frac{At^4 \theta}{24} \bigg) \dd t \bigg)^s \bigg \rvert &\leq s N^{s-1}\bigg(24q+1+(2A+8|B|)q\pi\theta N^4\\
&\quad+\frac{3A+2|B|}{3}\pi N^\delta\bigg).
\end{align*}
Since the set of $\mathfrak{M}(q,a)$ are disjoint, it follows that 
\begin{align}
&\bigg \lvert \int_{\mathfrak{M}}S_f(\alpha)^s e(-\alpha m) \dd\alpha -\int_{\mathfrak{M}} \bigg (\frac{V(q,a)}{24q}\int_1^N e\bigg( \frac{At^4 \theta}{24} \bigg) \dd t \bigg)^s  e(-\alpha m) \dd\alpha \bigg \rvert \notag \\
&\quad \leq \sum_{q\leq N^\delta}\sum\limits_{\substack{a=1 \\ (a,q)=1}}^q\int_{-N^{\delta-4}}^{N^{\delta-4}}s\left((24q+1)N^{s-1}+(2A+8|B|)q\pi\theta N^{s+3}+\frac{3A+2|B|}{3}\pi N^{\delta+s-1}\right) \dd\theta \notag\\
&\quad\leq 2s\sum_{q\leq N^\delta}\sum\limits_{\substack{a=1 \\ (a,q)=1}}^q\left(\left(1+24N^\delta\right)N^{s+\delta-5}+\left(A+4|B|\right)\pi N^{s+3\delta-5}+(A+|B|)\pi N^{s+2\delta-5}\right)\notag \\
&\quad\leq 70(A+4|B|)\pi s N^{5\delta+s-5},\notag
\end{align}
which completes the proof.
\end{proof} 

With Lemma \ref{Approx 4} established, we now turn our attention to estimating the integral in the lemma. Suppose $s \geq  17$. Let $\alpha \in \mathfrak{M}(q,a)$, $\theta = \alpha - a/q$. Define
\begin{align*}
    \mathcal{R}_{f, s}^*(m)&:=\int_{\mathfrak{M}} \bigg (\frac{V(q,a)}{24q}\int_1^N e\bigg( \frac{At^4 \theta}{24} \bigg) \dd t \bigg)^s  e(-\alpha m) \dd\alpha.
\end{align*}
We will approximate our major-arc integral by $\mathcal{R}_{f, s}^*(m)$. To achieve this, we define the following.
\begin{align}
\mathfrak{S}_{f,s}(m, Q) &: =\sum_{q\leq Q}\sum\limits_{\substack{a=1 \\ (a,q)=1}}^q\bigg(\frac{V(q,a)}{24q}\bigg)^s e\bigg(-\frac{am}{q}\bigg), \label{S (m,Q) definition} \\
v(\theta) &: = \int_1^N e\bigg (\frac{At^4 \theta}{24} \bigg) \dd t, \label{v theta definition} \\
\textrm{and} \quad J^*(m) &:= \int_{-N^{\delta-4}}^{N^{\delta-4}} \left(\left(\frac{24}{A}\right)^{1/4}v(\theta)\right)^se(-\theta m) \dd\theta. \label{J* definition}
\end{align}
Using these definitions, we then have
\begin{align}
\mathcal{R}_{f, s}^*(m)
&=\sum_{q\leq N^\delta}\sum\limits_{\substack{a=1 \\ (a,q)=1}}^q\bigg(\frac{V(q,a)}{24q}\bigg)^s e\bigg(-\frac{am}{q}\bigg)\int_{-N^{\delta-4}}^{N^{\delta-4}} v(\theta)^se(-\theta m) \dd\theta \notag\\
&=\left(\frac{24}{A}\right)^{s/4}\mathfrak{S}_{f,s}(m,N^\delta)J^*(m) \label{Approximating Major Arc Integral}.
\end{align}
Thus, to approximate $\mathcal{R}_{f, s}^*(m)$, it suffices to estimate $\mathfrak{S}_{f,s}(m,N^\delta)$ and $J^*(m)$ separately. This will be carried out in the following two sections.  

\section{Major Arcs : The Singular Series}\label{sec: Major Arcs II} 

In this section, our goal is to show that the singular series $\mathfrak{S}_{f,s}(m,N^\delta)>0$. Our strategy is as follows. We first extend the sum $\mathfrak{S}_{f,s}(m,N^\delta)$ to infinity. By demonstrating that each summand $V(q)$ for $q\in\N$ is multiplicative, we decompose the completed series into an Euler product. Next, we relate the Euler product to the counting of solutions to an equation over finite fields. Finally, using the generalized Hensel's Lemma, we show that for any finite field, there exists a solution that can be lifted to any larger field of the same characteristic. This leads us to our desired result. 

\subsection{Singular Series Completion} In this subsection, we complete the series $\mathfrak{S}_{f,s}(m,N^\delta)$. Define
\begin{align}
    \mathfrak{S}_{f,s}(m) &:= \sum_{q=1}^\infty V(q)\label{definition of S(m)}, \quad \textrm{where} \\
    \label{V Definition}
    V(q) &:= \sum\limits_{\substack{a=1 \\ (a,q)=1}}^q\bigg(\frac{V(q,a)}{24q}\bigg)^se\bigg(-\frac{am}{q}\bigg),
\end{align}
and $V(q,a)$ is given by \eqref{Definition V(q,a)}. We begin by showing that $V(q)$ is multiplicative.
\begin{lem}\label{V Multiplicativity 1}
Suppose $(a,q)=(b,r)=(q,r)=1$. Then $V(qr, ar+bq)=\frac{1}{24}V(q,a)V(r,b)$.
\end{lem}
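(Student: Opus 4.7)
The plan is to use the Chinese Remainder Theorem together with additive character orthogonality on $\Z/24$ to detect the compatibility constraint coming from the denominators of $f$. Since $\gcd(q,r)=1$, the map $n \mapsto (n \bmod 24q,\, n \bmod 24r)$ identifies $\Z/(24qr)$ with the ``diagonal'' pairs $(m,\ell) \in \Z/(24q) \times \Z/(24r)$ satisfying $m \equiv \ell \pmod{24}$. Combining this with Lemma \ref{Congruence}, which guarantees $f_n \bmod q$ depends only on $n \bmod 24q$, recasts the sum as
\[
V(qr, ar+bq) = \sum_{\substack{(m,\ell) \\ m \equiv \ell \,(\mathrm{mod}\,24)}} e\!\left(\frac{af_m}{q} + \frac{bf_\ell}{r}\right).
\]

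Next, I would apply the orthogonality identity $\mathbbm{1}_{m \equiv \ell \,(\mathrm{mod}\,24)} = \tfrac{1}{24}\sum_{k=0}^{23} e(k(m-\ell)/24)$ to decouple $m$ and $\ell$, yielding
\[
V(qr, ar+bq) = \frac{1}{24}\sum_{k=0}^{23} U_k(q,a)\, U_{-k}(r,b), \qquad U_k(q,a) := \sum_{m=1}^{24q} e\!\left(\frac{km}{24} + \frac{af_m}{q}\right),
\]
with $U_0(q,a) = V(q,a)$. The lemma then reduces to the claim that for each $k\in\{1,\dots,23\}$, at least one of $U_k(q,a)$ or $U_{-k}(r,b)$ vanishes; only the $k=0$ term survives and produces exactly $V(q,a)V(r,b)/24$.

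The main obstacle is verifying this vanishing of cross terms. The strategy is to show that $U_k(q,a)=0$ unless $k$ lies in a specific subgroup $H_q \subset \Z/24$ tied to $\gcd(24,q)$, and symmetrically $U_{-k}(r,b)=0$ unless $k\in H_r$. Because $\gcd(q,r)=1$ forces $\gcd(\gcd(24,q),\gcd(24,r))=1$, the subgroups $H_q$ and $H_r$ meet only at $\{0\}$ in $\Z/24$, so the cross terms vanish. Proving this Fourier-support statement requires a prime-by-prime CRT decomposition of the modulus $24q$: one factors $U_k(q,a)$ as a product of local exponential sums over the prime power divisors of $24q$ and verifies that each local factor vanishes unless the local component of $k/24$ is integral in the appropriate sense. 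The delicate bookkeeping is for primes dividing both $24$ and $q$, where the Hasse-derivative expansion $g(n+h)-g(n)=\sum_{j\ge 1} a_j(n) h^j$ of the integer polynomial $g:=24f$ must be invoked to show when a shift in the summation index produces a clean phase factor. An alternative route would be a direct bijective substitution $n = vr \cdot m + uq\cdot \ell$ using Bezout coefficients for $uq+vr=1$, but accounting for the denominators of $f$ so that the exact factor $1/24$ emerges appears at least as intricate as the Fourier route above.
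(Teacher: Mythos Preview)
Your approach is correct but takes a genuinely different route from the paper. The paper argues by direct CRT substitution: restricting first to the case $(24,q)=1$, it collapses the length-$24qr$ sum to $24$ copies of a length-$qr$ sum, parametrises $n \bmod qr$ as $n = tr + uq$ with $1\le t\le q$, $1\le u\le r$, and uses the identity $e\bigl(\tfrac{ar+bq}{qr}\, f_{tr+uq}\bigr) = e\bigl(\tfrac{a}{q} f_{tr}\bigr)\, e\bigl(\tfrac{b}{r} f_{uq}\bigr)$, which holds because $f_{n+uq}\equiv f_n \pmod q$ and $f_{n+tr}\equiv f_n \pmod r$ once both moduli are coprime to $24$; the remaining cases are declared similar and omitted. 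Your Fourier route through $\Z/24$ is uniform across all cases and avoids this case split, at the price of the cross-term vanishing. That vanishing is genuine but much easier than you suggest: since $f(m)\in\Z$ for every integer $m$, the integer polynomial $g=24f$ satisfies $g(m)\equiv 0\pmod{24}$ identically, so in the CRT factorisation of the complete sum $U_k(q,a)=\sum_{m\bmod 24q} e_{24q}\bigl(kqm+ag(m)\bigr)$ the local factor at any prime $p\mid 24$ with $p\nmid q$ reduces to the pure linear sum $\sum_{m\bmod p^{v_p(24)}} e_{p^{v_p(24)}}(c\,kqm)$ with $c$ a unit, which vanishes unless $p^{v_p(24)}\mid k$. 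Since $(q,r)=1$, every prime $p\mid 24$ fails to divide at least one of $q,r$, so for $k\not\equiv 0\pmod{24}$ one of $U_k(q,a)$, $U_{-k}(r,b)$ has a vanishing local factor. No Hasse-derivative expansion is needed, and the primes you flag as delicate---those dividing both $24$ and $q$---require no vanishing at all, as that is handled on the $r$ side.
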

\begin{proof}
We will focus on the case where $(24,q)=1$. Proofs for other cases follow similar arguments and thus are omitted. By Lemma \ref{Congruence}, we have
\begin{align*}
V(qr,ar+bq)=24\sum_{n=1}^{qr} e\bigg(\frac{ar+bq}{qr}f_n\bigg).
\end{align*}
Using Euclid's algorithm, we know that every residue class $m$ modulo $qr$ can be uniquely expressed as $tr+uq$ with $1\leq t\leq q$ and $1\leq u\leq r$. Therefore, we can rewrite the sum as 
\begin{align*}
     V(qr,ar+bq) &=24\sum_{t=1}^{q}\sum_{u=1}^r e\bigg(\frac{ar+bq}{qr}f_{tr+uq}\bigg)\\
     &=24\sum_{t=1}^{q}\sum_{u=1}^re\bigg(\frac{ar+bq}{qr}\cdot(f_{tr}+f_{uq})\bigg)\\
     &=24\sum_{t=1}^{q}e\bigg(\frac{a}{q}f_{tr}\bigg)\sum_{u=1}^re\bigg(\frac{b}{r}f_{uq}\bigg).
\end{align*}
Since $tr$ and $uq$ range over complete residue classes modulo $q$ and $r$ respectively, we conclude that
\begin{align*}
V(qr,ar+bq)= 24\sum_{t=1}^qe\bigg(\frac{a}{q}f_t\bigg)\sum_{u=1}^re\bigg(\frac{b}{r}f_u\bigg)=\frac{1}{24}V(q,a)V(r,b),
\end{align*}
where the last equality follows from Lemma \ref{Congruence}.
\end{proof}
\begin{lem}\label{V Multiplicativity 2}
The function $V(q)$, as defined in \eqref{V Definition}, is multiplicative.
\end{lem}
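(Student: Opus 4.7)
The approach is to derive multiplicativity of $V(q)$ as a direct consequence of Lemma \ref{V Multiplicativity 1} together with the Chinese Remainder Theorem. Assume $(q,r) = 1$, and expand the definition of $V(qr)$ as a sum over residues $c$ mod $qr$ with $(c, qr) = 1$. By the CRT, every such $c$ has a unique representation $c \equiv ar + bq \pmod{qr}$ with $1 \le a \le q$, $1 \le b \le r$, $(a,q) = 1$ and $(b,r) = 1$, and this sets up a bijection between the summation indices of $V(qr)$ and pairs of summation indices of $V(q)$ and $V(r)$.

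Next, I would simplify each factor in the summand under this substitution. For the exponential, write $\frac{ar + bq}{qr} = \frac{a}{q} + \frac{b}{r}$ so that
\[
e\!\left(-\frac{cm}{qr}\right) = e\!\left(-\frac{am}{q}\right) e\!\left(-\frac{bm}{r}\right).
\]
For the main factor, Lemma \ref{V Multiplicativity 1} gives $V(qr, ar+bq) = \tfrac{1}{24} V(q,a) V(r,b)$, and the normalization falls out cleanly:
\[
\frac{V(qr, ar+bq)}{24\, qr} = \frac{V(q,a)V(r,b)}{576\, qr} = \frac{V(q,a)}{24 q} \cdot \frac{V(r,b)}{24 r}.
\]
Raising to the $s$-th power preserves this factorization.

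Combining the two factorizations, the double sum over $(a,b)$ splits as a product:
\[
V(qr) = \sum_{\substack{a=1\\(a,q)=1}}^{q} \sum_{\substack{b=1\\(b,r)=1}}^{r} \left(\frac{V(q,a)}{24q}\right)^{s} \left(\frac{V(r,b)}{24r}\right)^{s} e\!\left(-\frac{am}{q}\right) e\!\left(-\frac{bm}{r}\right) = V(q)\, V(r).
\]
This is the desired multiplicativity. There is no genuine obstacle here, as all the nontrivial work is already packaged in Lemma \ref{V Multiplicativity 1}; the only subtlety is bookkeeping the factor of $24$ correctly so that the normalizations on the two sides balance, which they do once one writes $576\, qr = (24q)(24r)$.
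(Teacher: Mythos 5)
Your proof is correct and follows essentially the same route as the paper: invoke the Chinese Remainder Theorem to parametrize reduced residues mod $qr$ as $ar+bq$, apply Lemma \ref{V Multiplicativity 1} to split $V(qr, ar+bq)$, factor the exponential, and observe that $576qr = (24q)(24r)$ makes the normalizations separate. The only thing the paper includes that you omit is the trivial observation that $V(1)=1$, which is part of the definition of a multiplicative function, but this is an immediate check and not a substantive gap.
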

\begin{proof}
Note that $V(1)=1$. Assume $(q,r)=1$. Then, by Lemma \ref{V Multiplicativity 1},
    \begin{align*}
         V(qr) &= \sum\limits_{\substack{a=1 \\ (a,qr)=1}}^{qr}\bigg(\frac{V(qr,a)}{24qr}\bigg)^se\bigg(-\frac{am}{qr}\bigg)\\
         &=\sum\limits_{\substack{a=1 \\ (a,q)=1}}^{q}\sum\limits_{\substack{b=1 \\ (b,r)=1}}^{r}\bigg(\frac{V(qr,ar+bq)}{24qr}\bigg)^se\bigg(-\frac{ar+bq}{qr}m\bigg)\\
         &=\sum\limits_{\substack{a=1 \\ (a,q)=1}}^{q}\sum\limits_{\substack{b=1 \\ (b,r)=1}}^{r}\bigg(\frac{V(q,a)V(r,b)}{576qr}\bigg)^se\bigg(-\frac{am}{q}\bigg)e\bigg(-\frac{bm}{r}\bigg)\\
          &=V(q)V(r),
    \end{align*}
    which completes the proof.
\end{proof}
\begin{lem}\label{lem::Singular Series Extension}
Let $s \geq  17$ and $N^{\delta} \geq  e^{e^{467}} $. Then, $|\mathfrak{S}_{f,s}(m)|\leq e^{e^{468}}$. Moreover, we have
\begin{align*}
|\mathfrak{S}_{f,s}(m)-\mathfrak{S}_{f,s}(m, N^\delta)| \leq \frac{ (52A^{1/4})^s}{\left(\frac{9s}{73}-2\right) N^{\left(\frac{9s}{73}-2\right)\delta}}.
\end{align*}
\end{lem}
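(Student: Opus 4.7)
Both conclusions of the lemma rest on a single uniform Weyl-type pointwise estimate
\begin{equation*}
\biggl|\frac{V(q,a)}{24q}\biggr| \leq 52 A^{1/4}\, q^{-9/73} \qquad \text{valid for all } q\geq 1 \text{ and } (a,q)=1,
\end{equation*}
from which, together with the multiplicativity of $V(q)$ established in Lemma~\ref{V Multiplicativity 2}, one obtains the uniform aggregate bound
\begin{equation*}
|V(q)| \,\leq\, \phi(q) \max_{(a,q)=1} \biggl|\frac{V(q,a)}{24q}\biggr|^{s} \,\leq\, (52 A^{1/4})^{s}\, q^{\,1-9s/73}.
\end{equation*}
The exponent is calibrated so that, for $s\geq 17$, one has $9s/73 - 1 \geq 80/73 > 1$, which makes $\sum_q q^{1-9s/73}$ absolutely convergent.

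Granting this pointwise estimate, the tail bound is immediate by integral comparison:
\begin{equation*}
\sum_{q > N^\delta} |V(q)| \,\leq\, (52A^{1/4})^{s} \int_{N^\delta}^{\infty} x^{1-9s/73}\dd x \,=\, \frac{(52A^{1/4})^{s}}{(9s/73 - 2)\, N^{(9s/73 - 2)\delta}},
\end{equation*}
which is exactly the claimed bound on $|\mathfrak{S}_{f,s}(m) - \mathfrak{S}_{f,s}(m,N^\delta)|$. For the absolute bound $|\mathfrak{S}_{f,s}(m)|\leq e^{e^{468}}$, I would split the series at the threshold $q_0 = (52 A^{1/4})^{73/9}$ where the Weyl bound becomes nontrivial. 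Beyond $q_0$ the same integral comparison yields a contribution bounded by $(52A^{1/4})^{146/9}/(9s/73-2)$; below $q_0$ the trivial bound $|V(q)|\leq \phi(q)\leq q$ gives at most $q_0^{2}/2 = (52A^{1/4})^{146/9}/2$. Using $A\leq e^{e^{10}}$, both pieces are of size at most $e^{O(e^{10})}$, comfortably below $e^{e^{468}}$; crucially, this combined bound is \emph{uniform} in $s$.

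The main obstacle is the pointwise Weyl-type bound on $V(q,a)$ with the explicit constant $52 A^{1/4}$ and exponent $9/73$. By Lemma~\ref{Congruence} one may rewrite $V(q,a) = 24\sum_{n=1}^{q} e(a f_n/q)$, reducing the problem to a Gauss sum for the degree-$4$ integer polynomial $24 f_n$ modulo $q$. The natural route is to factor $q$ via CRT into prime powers and bound each $p^k$ factor: Weil-type bounds handle $k=1$, while $k\geq 2$ is handled by Hensel-style stationary phase. The delicate point is the dependence on the leading coefficient $A/24$ uniformly in the ramified range $p\mid A$, which is precisely where the prefactor $A^{1/4}$ appears and where the exponent degrades from the optimal Hua-type value $1/4$ down to $9/73$. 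Once those constants are secured, the rest of the proof is routine bookkeeping.
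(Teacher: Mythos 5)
Your tail-bound derivation is correct and matches the paper's, because that estimate only ever invokes the Weyl-type bound for $q > N^\delta \geq e^{e^{467}}$, which is precisely where it is available. The genuine gap is in the first half. You state the pointwise estimate
\begin{equation*}
\biggl|\frac{V(q,a)}{24q}\biggr| \leq 52A^{1/4}q^{-9/73}
\end{equation*}
as ``valid for all $q\geq 1$,'' but that is not what the paper's Lemma~\ref{lem::upper-F(a)} gives. Applying that lemma to $V(q,a)$ yields an exponent of the form $-1/8 + \frac{3.1983}{4\log\log(24q)} + \frac{\log\log q}{8\log q}$, and the correction terms only drop below $\frac{1}{8}-\frac{9}{73} = \frac{1}{584}$ once $q \geq e^{e^{467}}$. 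For intermediate $q$ (between roughly $(52A^{1/4})^{73/9}$ and $e^{e^{467}}$) no bound of the claimed shape is available; you acknowledge this as ``the main obstacle'' and sketch a Weil-plus-Hensel route, but you do not carry it out, and it is not the route the paper takes.

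This matters because your derivation of $|\mathfrak{S}_{f,s}(m)|\leq e^{e^{468}}$ depends on the all-$q$ pointwise bound: you split at $q_0 = (52A^{1/4})^{73/9}$ and claim the tail above $q_0$ is controlled by the Weyl bound, arriving at a total of size $e^{O(e^{10})}$. With the paper's actual lemma the Weyl bound is unusable on $(q_0, e^{e^{467}})$, where the only available estimate is the trivial one $|V(q)|\leq \phi(q)\leq q$, contributing roughly $\tfrac12 e^{2e^{467}}$ — this is the dominant term, and it is exactly why the paper's final bound is $e^{e^{468}}$ rather than anything as small as $e^{O(e^{10})}$. So the proposal as written both over-claims the pointwise input and, as a consequence, reaches a bound that the method does not support. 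The paper instead splits the series at $e^{e^{467}}$, uses the trivial bound below it and the Weyl bound above it; your plan would only work if you actually established a Weil/Hensel estimate uniform in $q$, which is a materially different (and unfinished) argument.
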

\begin{proof}
We begin by evaluating $V(q,a)$ using Lemma \ref{lem::upper-F(a)}. To address both cases where $(Aa,24q)=1$ and $(Aa,24q)\neq 1$, we choose $\eta = A^2$ in Lemma \ref{lem::upper-F(a)}. This gives us the bound
\begin{align*}
|V(q,a)|&\leq 2(24q)^{7/8} + 5 A^{1/4}(24q)^{1 + \frac{3.1983}{4\log\log (24q)}} \left( q^{-1} +  (24q)^{-1} +  q (24q)^{-4}\right)^{1/8} (\log q)^{1/8}\\
 &\leq 33q^{7/8}+1200A^{1/4}q^{7/8 + \frac{3.1983}{4\log\log (24q)}+\frac{\log\log q}{8\log q}}.
\end{align*}
When $q\geq  e^{e^{467}}$, we have
\[
\frac{3.1983}{4\log\log (24q)}+\frac{\log\log q}{8\log q}\leq \frac{1}{584}, 
\]
which implies 
\[
|V(q,a)|\leq 33q^{7/8}+1200A^{1/4}q^{64/73}\leq 1233A^{1/4}q^{64/73}.
\]
Therefore, we conclude that $|V(q)|\leq (52A^{1/4})^sq^{1-\frac{9s}{73}}$, where $s\geq  17$. As a result, $\mathfrak{S}_{f,s}(m)$ converges absolutely and uniformly with respect to $m$. We now have the following bound: 
\begin{align*}
|\mathfrak{S}_{f,s}(m)| \leq\sum_{q=1}^\infty|V(q)| &\leq\sum_{q=1}^{\lfloor e^{e^{467}} \rfloor}|V(q)|+ (52A^{1/4})^s\int_{e^{e^{467}}}^\infty x^{1-\frac{9s}{73}}\dd x\\
    & \leq \sum_{q=1}^{\lfloor e^{e^{467}} \rfloor} q+\frac{(52A^{1/4})^s}{\frac{9s}{73}-2}\left(e^{e^{467}}\right)^{2-\frac{9s}{73}} \leq e^{e^{468}}.
\end{align*}
Moreover, when $N^{\delta} \geq  e^{e^{467}}$,
we deduce that
\begin{align*}
    |\mathfrak{S}_{f,s}(m)-\mathfrak{S}_{f,s}(m, N^\delta)| \leq (52A^{1/4})^s\int_{N^{\delta}}^\infty x^{1-\frac{9s}{73}} \dd x \leq  \frac{ (52A^{1/4})^s}{\left(\frac{9s}{73}-2\right) N^{\left(\frac{9s}{73}-2\right)\delta}}.
\end{align*}
This completes the proof.
\end{proof}

\subsection{Counting Solution over Finite Fields} 
 Since $V(q)$ is multiplicative by Lemma \ref{V Multiplicativity 2} and $\fS(m)$ converges absolutely by Lemma \ref{lem::Singular Series Extension}, we obtain the Euler product representation
\begin{align*}
    \mathfrak{S}_{f,s}(m) =\prod_{p\text{ prime}}\sum_{k=0}^\infty V(p^k)=\prod_{p\text{ prime}} (1+V(p)+V(p^2)+\cdots).
\end{align*}
Let $1\leq n_i\leq t$. Define $\mathcal{M}_m(t,q)$ as the number of solutions to the congruence equation
\begin{align}\label{eq:sum-f(n)=m mod q}
f(n_1)+f(n_2)+\cdots+f(n_s)\equiv m \bmod q,
\end{align}
where $f(x)=A \binom{t}{4} + B\binom{t}{3} + C\binom{t}{2} + t$. To simplify the notation, we write $\mathcal{M}_m(q)=\mathcal{M}_m(q,q)$.\\

\begin{lem}\label{lem::sum-V(d)}
  For $q \in \N$, we have
  \begin{align*} 
      \sum_{d \,|\, q} V(d) =  q^{1-s} 24^{-s} \cM_m(24q,q).
  \end{align*}
\end{lem}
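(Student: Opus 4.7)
The plan is to prove this identity by expanding both sides using Fourier analysis on $\Z/q\Z$ and then reorganizing the sums. The key insight is that divisors of $q$ parametrize the reduced forms of fractions $b/q$ with $1 \le b \le q$.

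\medskip

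\noindent\textbf{Step 1: Fourier expansion of $\cM_m(24q,q)$.} I would begin by expressing the indicator of the congruence \eqref{eq:sum-f(n)=m mod q} (with $t = 24q$) through additive characters mod $q$:
\begin{equation*}
\cM_m(24q,q) = \sum_{n_1,\dots,n_s=1}^{24q} \frac{1}{q} \sum_{b=1}^{q} e\!\left(\frac{b(f_{n_1}+\dots+f_{n_s}-m)}{q}\right) = \frac{1}{q}\sum_{b=1}^{q} e\!\left(-\frac{bm}{q}\right)\left(\sum_{n=1}^{24q} e\!\left(\frac{bf_n}{q}\right)\right)^{s}.
\end{equation*}

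\noindent\textbf{Step 2: Reparametrize by reduced fractions.} For each $b$ with $1\le b\le q$, write $b/q=a/d$ in lowest terms, so that $d\mid q$, $1\le a\le d$, and $(a,d)=1$; this bijection sends $\{1,\dots,q\}$ onto the disjoint union $\bigsqcup_{d\mid q}\{a:1\le a\le d,\,(a,d)=1\}$. Under this substitution, $e(-bm/q)=e(-am/d)$ and $e(bf_n/q)=e(af_n/d)$.

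\medskip

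\noindent\textbf{Step 3: Exploit the periodicity from Lemma \ref{Congruence}.} Since $d\mid q$, Lemma \ref{Congruence} gives $f_{n+24d}\equiv f_n\pmod d$, so $n\mapsto e(af_n/d)$ has period dividing $24d$. Since $24d\mid 24q$, the inner sum collapses:
\begin{equation*}
\sum_{n=1}^{24q} e\!\left(\frac{af_n}{d}\right) = \frac{q}{d}\sum_{n=1}^{24d} e\!\left(\frac{af_n}{d}\right) = \frac{q}{d}\, V(q,a)\big|_{\text{with }q\to d} \;=\; \frac{q}{d}\,V(d,a),
\end{equation*}
recalling the definition \eqref{Definition V(q,a)}.

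\medskip

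\noindent\textbf{Step 4: Assemble and compare with $V(d)$.} Substituting back, I obtain
\begin{equation*}
\cM_m(24q,q) = \frac{1}{q}\sum_{d\mid q}\sum_{\substack{a=1\\(a,d)=1}}^{d}\left(\frac{q}{d}\right)^{s} V(d,a)^{s}\, e\!\left(-\frac{am}{d}\right) = q^{s-1}\,24^{s}\sum_{d\mid q} V(d),
\end{equation*}
where in the last equality I used the defining formula \eqref{V Definition}:
\begin{equation*}
V(d) = \frac{1}{(24d)^{s}}\sum_{\substack{a=1\\(a,d)=1}}^{d} V(d,a)^{s}\,e(-am/d).
\end{equation*}
Dividing by $q^{s-1}24^{s}$ yields the claimed identity. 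The only nontrivial input is the periodicity used in Step 3, which is precisely Lemma \ref{Congruence}; everything else is bookkeeping, so I do not anticipate any serious obstacle.
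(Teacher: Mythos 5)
Your proof is correct and proceeds by what is clearly the intended route: expand $\cM_m(24q,q)$ using orthogonality of additive characters mod $q$, regroup the frequencies $b/q$ according to their reduced form $a/d$ with $d\mid q$, and then use the periodicity from Lemma \ref{Congruence} (applied with modulus $d$) to collapse the inner sum over $n\in\{1,\dots,24q\}$ to $\tfrac{q}{d}V(d,a)$. The paper defers this argument to \cite[Lemma 6.4]{dong2024pollock} with $2q$ replaced by $24q$, and your Steps 1--4 reconstruct precisely that computation, so this is essentially the same proof.
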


\begin{proof}
The proof is analogous to Lemma 6.4 in \cite{dong2024pollock} by replacing $2q$ with $24q$. Hence, we omit the details here. 
\end{proof}
It follows from Lemma \ref{lem::sum-V(d)} by choosing $q = p^k$ that 
\begin{align*}
    \fS(m) &= \prod_{p \text{ prime}} \sum_{k=0}^{\infty} V(p^k) = \prod_{ p \text{ prime}} \lim_{k \to \infty} 24^{-s}p^{k(1-s)} \cM_m(24p^k,p^k) \\ 
    &= \prod_{ p \text{ prime}} \lim_{k \to \infty} p^{k(1-s)} \cM_m(p^k),
\end{align*}
where the third equality follows from $\cM_m(24q, q) = 24^s \cM_m(q)$. Define
\begin{equation*}
    T_m(p) := \lim_{k \to \infty} p^{k(1-s)}\cM_m(p^k).
\end{equation*}
Then $ \fS(m) = \prod_{p} T_m(p)$. The following lemma gives us a bound for $T_m(p)$, which leads to an estimation for $\fS(m)$. 

\begin{lem}\label{lem::bound-Tm(p)}
    For any $s \geq 17$ and any prime $p$, we have 
    \begin{equation*}
        \left|T_m(p) - 1\right| \leq e^{se^{932}} \left(1 - \frac{1}{p}\right)\frac{p^{1 - \frac{9s}{73}}}{1 - p^{1 - \frac{9s}{73}}}. 
    \end{equation*}
\end{lem}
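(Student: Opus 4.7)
The plan is to expand $T_m(p) - 1 = \sum_{k \geq 1} V(p^k)$ via the Euler product, then split this tail at the threshold where the main estimate from Lemma \ref{lem::Singular Series Extension} becomes applicable. Set $k_0 = k_0(p)$ to be the smallest positive integer with $p^{k_0} \geq e^{e^{467}}$; when $p \geq e^{e^{467}}$ one has $k_0 = 1$ and the first sum below is empty.

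For $1 \leq k < k_0$, I would use the trivial pointwise bound $|V(p^k, a)/(24 p^k)| \leq 1$ coming from definition \eqref{V Definition}, which gives $|V(p^k)| \leq \phi(p^k) = p^k(1 - 1/p)$. A geometric sum then yields
\[
\sum_{k=1}^{k_0 - 1} |V(p^k)| \leq (1 - 1/p) \sum_{k=1}^{k_0 - 1} p^k = p^{k_0 - 1} - 1 < e^{e^{467}}.
\]
For $k \geq k_0$, the computation inside the proof of Lemma \ref{lem::Singular Series Extension} gives $|V(p^k)| \leq (52 A^{1/4})^s p^{k(1 - 9s/73)}$. Writing $x = p^{1 - 9s/73}$, which satisfies $x < 1/p \leq 1/2$ since $s \geq 17$ forces $9s/73 > 2$, a geometric sum combined with $x^{k_0} \leq x$ gives
\[
\sum_{k \geq k_0} |V(p^k)| \leq (52 A^{1/4})^s \cdot \frac{x^{k_0}}{1 - x} \leq (52 A^{1/4})^s \cdot \frac{p^{1 - 9s/73}}{1 - p^{1 - 9s/73}}.
\]

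Combining the two contributions yields
\[
|T_m(p) - 1| \leq e^{e^{467}} + (52 A^{1/4})^s \cdot \frac{p^{1 - 9s/73}}{1 - p^{1 - 9s/73}}.
\]
The remaining step is to verify that this is dominated by $e^{s e^{932}} (1 - 1/p) \frac{p^{1 - 9s/73}}{1 - p^{1 - 9s/73}}$. I would split by cases. If $p > e^{e^{467}}$, then $k_0 = 1$ so only the second term survives, and it suffices to check $(52 A^{1/4})^s \leq e^{s e^{932}} (1 - 1/p)$, which follows from $1 - 1/p \geq 1/2$ together with the standing coefficient bound $A \leq e^{e^{10}}$. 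If $p \leq e^{e^{467}}$, one uses the lower bound $(1 - 1/p) \frac{p^{1 - 9s/73}}{1 - p^{1 - 9s/73}} \geq \frac{1}{2} p^{1 - 9s/73} \geq \frac{1}{2} e^{-e^{467}(9s/73 - 1)}$, so matters reduce to checking $e^{e^{467} \cdot 9s/73} \leq \frac{1}{2} e^{s e^{932}}$, which holds with enormous slack since $\frac{9}{73} e^{467} \ll e^{932}$.

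The main step requiring care is this final case split: the astronomically large factor $e^{s e^{932}}$ is designed precisely to absorb both the constant $(52 A^{1/4})^s$ from the tail and the residual $e^{e^{467}}$ from the trivial small-$k$ regime, and one must confirm that the $p$-dependence on the right-hand side really is large enough to swallow these. Beyond this verification, no new analytic input is needed on top of the work already done for Lemma \ref{lem::Singular Series Extension}.
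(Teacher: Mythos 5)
Your proposal is correct and takes a genuinely more economical route than the paper. You expand $T_m(p) - 1 = \sum_{k \geq 1} V(p^k)$ (using Lemma \ref{lem::sum-V(d)} implicitly), split the $k$-sum at the threshold $p^{k_0} \geq e^{e^{467}}$, reuse the bound $|V(q)| \leq (52A^{1/4})^s q^{1 - 9s/73}$ already established inside the proof of Lemma \ref{lem::Singular Series Extension} for $q$ above the threshold, and fall back on the trivial bound $|V(p^k)| \leq \phi(p^k)$ (a telescoping geometric sum giving $< e^{e^{467}}$ total) below it. The paper instead re-expresses $\mathcal{M}_m(p^k) - p^{(s-1)k}$ as a sum of exponential sums over $\Z/p^r\Z$, rescales by $\tilde f = 24f$, and bounds these via Lemma \ref{lem::upper-F(a)} with $\eta = 1$ in a three-way case split on $p > 3$, $p = 2$, $p = 3$; the small-modulus regime is folded into a single \emph{uniform} bound $\bigl|\sum_x e(b\tilde f(x)/p^r)\bigr| \leq e^{e^{932}} p^{64r/73}$, so that raising to the $s$-th power and summing one geometric series delivers the target expression $e^{se^{932}}(1 - 1/p)p^{1 - 9s/73}/(1 - p^{1 - 9s/73})$ in one stroke. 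Your decomposition avoids re-deriving the exponential sum estimates (and the $24$-adic case analysis) from scratch, at the cost of an intermediate bound of the form $e^{e^{467}} + (52A^{1/4})^s p^{1 - 9s/73}/(1 - p^{1 - 9s/73})$ that requires the final absorption argument — which you carry out correctly, the key numerical check being $\tfrac{9}{73}e^{467} \ll e^{932}$ and the use of $1 - 1/p \geq 1/2$; the paper's version reaches the stated form directly. Both approaches consume the same technical input (Lemma \ref{lem::upper-F(a)} plus trivial bounds for small moduli) and depend on the huge constant $e^{se^{932}}$ to swallow the contribution of the small-modulus regime.
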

\begin{proof}
We consider the following cases.\medskip

\textbf{Case 1.} $p > 3$. In this case, $(24,p) =1$, so any solution $(n_1, \dots, n_s)$ of the equation
\begin{equation}
    \tilde{f}(n_1) + \dots + \tilde{f}(n_s) \equiv 24m \mod p^k
\end{equation}
where $\tilde{f}(t) := 24f(t)$, is also a solution of \eqref{eq:sum-f(n)=m mod q} and vice versa. Therefore
\begin{align}
\mathcal{M}_m(p^k)&=\frac{1}{p^k}\sum_{t=1}^{p^k}\sum_{n_1=1}^{p^k}\sum_{n_2=1}^{p^k}\cdots\sum_{n_s=1}^{p^k}e(t(\tilde{f}(n_1)+\tilde{f}(n_2)+\cdots+\tilde{f}(n_s)-24m)/p^k)\notag \\
    &=p^{(s-1)k}+\frac{1}{p^k}\sum_{t=1}^{p^k-1}e\bigg(-\frac{24mt}{p^k}\bigg)\bigg(\sum_{x=1}^{p^k}e\bigg(\frac{t\tilde{f}(x)}{p^k}\bigg)\bigg)^s. \label{Primes > 7 Step 1}
\end{align}
Each integer $1\leq t\leq p^k-1$ can be uniquely expressed as $t = bp^{k-r}$, where $(b,p)=1$, $1\leq r\leq k$, and $1\leq b\leq p^r$. Therefore, the second term in the right-hand side of equation \eqref{Primes > 7 Step 1} can be rewritten as
\begin{align}
\frac{1}{p^k}\sum_{r=1}^k&\sum\limits_{\substack{b=1 \\ (b,p)=1}}^{p^r}e\bigg(\frac{-24mb}{p^r}\bigg) \bigg(\sum_{x=1}^{p^k}e\bigg(\frac{b\tilde{f}(x)}{p^r}\bigg)\bigg)^s \notag \\
&=\frac{1}{p^k}\sum_{r=1}^k\sum\limits_{\substack{b=1 \\ (b,p)=1}}^{p^r}e\bigg(\frac{-24mb}{p^r}\bigg)\bigg(p^{k-r}\sum_{x=1}^{p^r}e\bigg(\frac{b\tilde{f}(x)}{p^r}\bigg)\bigg)^s \notag \\
&=p^{(s-1)k}\sum_{r=1}^k p^{-rs}\sum\limits_{\substack{b=1 \\ (b,p)=1}}^{p^r}e\bigg(\frac{-24mb}{p^r}\bigg) \bigg(\sum_{x=1}^{p^r}e\bigg(\frac{b\tilde{f}(x)}{p^r}\bigg)\bigg)^s.\label{Primes > 7 Step 2}
\end{align}
By combining \eqref{Primes > 7 Step 1} and \eqref{Primes > 7 Step 2}, we obtain
\begin{align*}
\left|\mathcal{M}_m(p^k)-p^{(s-1)k}\right| \leq p^{(s-1)k}\sum_{r=1}^k p^{-rs}\sum\limits_{\substack{b=1 \\ (b,p)=1}}^{p^r} \left|\sum_{x=1}^{p^r}e\bigg(\frac{b\tilde{f}(x)}{p^r}\bigg)\right|^s.
\end{align*}
Applying Lemma \ref{lem::upper-F(a)} with $X= q = p^r$ and $\eta = 1$, we have 
\begin{align*}
 \left|\sum_{x=1}^{p^r}e\bigg(\frac{b\tilde{f}(x)}{p^r}\bigg)\right| &\leq 2 p^{7r/8} + 5 p^{r + \frac{3.1983r}{4\log(3r \log p)}}\left(2p^{-r} + p^{-3r}\right)^{1/8} (r \log p)^{1/8} \\
 &\leq 2 p^{7r/8} + 5 (3 p^{-r})^{1/8} p^{r + \frac{3.1983r}{4\log(r\log p)}} p^{\frac{\log(3r\log p)}{8\log p}} \\ 
 &\leq 2 p^{7r/8} + 6 p^{7r/8 + \frac{3.1983r}{4\log(3r\log p)} + \frac{\log(r \log p)}{8\log p}}.
\end{align*}
When $r \geq 1$ and $p \geq e^{e^{467}}$, we have 
\begin{equation}
\frac{3.1983r}{4\log(3r\log p)}+\frac{\log(r\log p)}{8\log p}\leq \frac{r}{584},     
\end{equation}

It is also straight forward to check that the above inequality also holds for all $p > 3$ and $r \geq e^{467}$. Furthermore, for $3 < p \leq e^{e^{467}}$ and $1 \leq r \leq e^{467}$, trivially we have
\begin{align*}
\left|\sum_{x= 1}^{p^r} e\left(\frac{b\tilde{f}(x)}{p^r}\right)\right| \leq p^r \leq e^{e^{932}} p^{\frac{64r}{73}}.
 \end{align*}
Combining two cases, we see that for any prime $p > 3$ and any $r\in \N$, we have 
\begin{equation}
    \left|\sum_{x= 1}^{p^r} e\left(\frac{b\tilde{f}(x)}{p^r}\right)\right| \leq e^{e^{932}}p^{\frac{64r}{73}}. 
\end{equation}

\textbf{Case 2.} $p = 2$. In this case, the equation \eqref{eq:sum-f(n)=m mod q} is equivalent to 
\[ 3 f(n_1) + \dots + 3 f(n_s) \equiv 3m \mod 2^k.\]
Therefore, we can write 
\begin{align}
\mathcal{M}_m(2^k)&=\frac{1}{2^k}\sum_{t=1}^{p^k}\sum_{n_1=1}^{2^k}\cdots\sum_{n_s=1}^{2^k}e(t(3\tilde{f}(n_1)+\cdots+3\tilde{f}(n_s)-3m)/2^k)\notag \\
&=2^{(s-1)k}+\frac{1}{2^k}\sum_{t=1}^{2^k-1}e\bigg(-\frac{3mt}{2^k}\bigg)\bigg(\sum_{x=1}^{2^k}e\bigg(\frac{t\tilde{f}(x)}{2^{k+3}}\bigg)\bigg)^s, \label{Prime=2 Step 1}
\end{align}
Similar with Case 1, it follows that 
\begin{equation*}
    \left|\cM_m(2^k) - 2^{(s-1)k}\right| \leq 2^{(s-1)k} \sum_{r = 1}^k 2^{-rs} \sum_{\substack{b = 1 \\ (b,2) = 1}}^{2^r} \left|\sum_{x = 1}^{2^r} e\left(\frac{t \tilde{f}(x)}{2^{r+3}}\right)\right|^s  
\end{equation*}
for any $r \in \N$. Applying Lemma \ref{lem::upper-F(a)} with $\eta = 1, X = 2^r$, and $q = 2^{r+3}$, we have
\begin{align*}
    \left|\sum_{x = 1}^{2^r} e\left(\frac{b \tilde{f}(x)}{2^{r+3}}\right)\right| &\leq 2 \cdot 2^{7r/8} + 5 \cdot 2^{r + \frac{3.1983r}{4\log(3r\log 2)}}(2^{-r-2} + 2^{-r} + 2^{2-3r})^{1/8} \left(\log(2^{r+3})\right)^{1/8}  \\
    &= 2 \cdot 2^{7r/8} + 6 \cdot 2^{7r/8 + \frac{3.1983r}{\log(3r \log 2)} + \frac{\log\left((r+3)\log 2\right)}{8 \log 2}} \leq 2^{e^{467}} \cdot 2^{\frac{64r}{73}}.
\end{align*}
\textbf{Case 3.} $ p =3$. This case is similar with Case 2, in which we use the equivalency between \eqref{eq:sum-f(n)=m mod q} and 
\[ 8 f(n_1) + \dots + 8 f(n_s) \equiv 8m \mod 3^k.\]
In particular, we obtain 
\begin{align*}
    \left|\cM_m(3^k) - 3^{(s-1)k}\right| \leq 3^{(s-1)k} \sum_{r = 1}^k 3^{-rs} \sum_{\substack{b = 1 \\ (b,3) = 1}}^{3^r} \left|\sum_{x = 1}^{3^r} e \left(\frac{b\tilde{f}(x)}{3^{r+1}}\right)\right| 
\end{align*}
Again, by applying Lemma \ref{eq:bound.F(alpha)} with $\eta = 1, X = 3^r,$ and $q =3^{r+1}$, we have for any $r \in \N$, 
\begin{align*}
 \left|\sum_{x = 1}^{3^r} e \left(\frac{b\tilde{f}(x)}{3^{r+1}}\right)\right|  \leq e^{e^{467}} 3^{\frac{64r}{73}}.   
\end{align*}
Combing all three cases, we obtain that for $s \geq 17$ and for any prime $p$, 
\begin{equation*}
    |T_m(p) - 1| \leq \sum_{r = 1}^{\infty} p^{-rs} \sum_{\substack{b = 1 \\ (b,p) = 1}}^{p^r} e^{se^{932}} p^{\frac{64sr}{73}}  = e^{se^{932}} \left(1 - \frac{1}{p}\right)\frac{p^{1 - \frac{9s}{73}}}{1 - p^{1 - \frac{9s}{73}}}. 
\end{equation*}
\end{proof}

\subsection{Hensel Lifting} 

In this subsection, we aim to provide a lower bound for $\cM_m(p^k)$ for any prime $p$ and $k\in\N$. First, we consider the case when $k = 1$. In this case, $\cM_m(p)$ is the number of solutions to the equation 
\begin{equation}\label{eq:sum-f(n)=m-mod p}
    f(n_1) + f(n_2) + \dots + f(n_s) \equiv m \mod p, 
\end{equation}
where $p \geq 7$ and $1 \leq n_i \leq p$ for $i = 1, 2, \dots, s$. Let $\cM_m^*(p)$ be the number of solutions to \eqref{eq:sum-f(n)=m-mod p} such that $(f(n_1),p) = (f'(n_1), p) = 1$. The following lemma states that there is indeed at least one such solution. 

\begin{lem}\label{lem:M-star-p}
For $p \geq 11$ and $s \geq 17$, we have $\cM_m^*(p) \geq 1$.     
\end{lem}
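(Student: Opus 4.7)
My plan is to fix $n_1$ first and then count the remaining $(s-1)$-tuples via Fourier analysis on $\F_p$. Let $\bar f$ denote the reduction of $f$ modulo $p$ and set $d = \deg \bar f$. Because $f(0)=0$ and $f(1)=1$, the polynomial $\bar f$ is not identically zero mod $p$, so $d \in \{1,2,3,4\}$; and because $p \geq 11 > 4 \geq \deg f$, the derivative $\bar f'$ is non-vanishing as a polynomial (if $\bar f'$ were the zero polynomial in characteristic $p > \deg \bar f$, then $\bar f$ would be constant, contradicting $\bar f(1)-\bar f(0) \equiv 1 \not\equiv 0 \pmod p$). Consequently, the residues $n_1 \in \{1,\dots,p\}$ with $p \mid f(n_1)$ or $p \mid f'(n_1)$ number at most $d + (d-1) \leq 7$, leaving at least $p-7 \geq 4$ admissible choices.

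For each admissible $n_1$, set $c := m - f(n_1) \bmod p$ and write $e_p(x) := e(x/p)$. By orthogonality, the number of tuples $(n_2,\dots,n_s) \in \{1,\dots,p\}^{s-1}$ with $f(n_2) + \cdots + f(n_s) \equiv c \pmod p$ equals
\[
N(c) \;=\; p^{s-2} \;+\; \frac{1}{p}\sum_{t=1}^{p-1} T_f(t)^{s-1}\,e_p(-tc), \qquad T_f(t) := \sum_{n=1}^{p} e_p(t f(n)),
\]
and the problem reduces to showing that the error term has absolute value strictly less than $p^{s-2}$.

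I would control the error by combining two standard ingredients. First, the Weil bound, valid since $\gcd(d,p)=1$ for $p \geq 11 > d$, gives $|T_f(t)| \leq (d-1)\sqrt p \leq 3\sqrt p$ for every $t \not\equiv 0 \pmod p$. Separately, Parseval together with the fibre bound $|\bar f^{-1}(v)| \leq d$ (and $\sum_v |\bar f^{-1}(v)| = p$) yields $\sum_{t=0}^{p-1}|T_f(t)|^2 = p\sum_v|\bar f^{-1}(v)|^2 \leq dp^2$, whence $\sum_{t\neq 0}|T_f(t)|^2 \leq (d-1)p^2 \leq 3p^2$. Interpolating between the sup and $\ell^2$ bounds gives
\[
\sum_{t \neq 0} |T_f(t)|^{s-1} \;\leq\; (3\sqrt p)^{s-3} \cdot 3p^2 \;=\; 3^{s-2}\, p^{(s+1)/2},
\]
so that $|N(c) - p^{s-2}| \leq 3^{s-2} p^{(s-1)/2}$. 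Positivity of $N(c)$ therefore reduces to the clean numerical condition $p^{(s-3)/2} > 3^{s-2}$. For $s=17$ this reads $p^7 > 3^{15} = 14348907$; it holds at $p = 11$ because $11^7 = 19487171$, and each increment of $s$ by $2$ multiplies the ratio (main)/(error) by $p/9 \geq 11/9 > 1$, so the inequality persists for all $s \geq 17$ and $p \geq 11$. Summing over the $\geq p-7$ admissible $n_1$ then yields $\cM_m^*(p) \geq (p-7)\min_c N(c) \geq 1$.

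The main obstacle is the tightness at $p = 11$: a naive Weil estimate alone, bounding $\frac{1}{p}\sum_{t \neq 0}|T_f(t)|^{s-1}$ crudely by $\frac{p-1}{p}(3\sqrt p)^{s-1}$, falls short at $p=11$ for $s=17$, and it is precisely the Parseval refinement — trading one factor of $|T_f(t)|$ for an $\ell^2$ average — that pushes the estimate across the threshold.
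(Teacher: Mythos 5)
Your proof is correct, but it takes a genuinely different route from the paper's. The paper argues purely combinatorially: after observing that at least $p-7$ residues $n$ satisfy $(f(n),p)=(f'(n),p)=1$, it forms $\cA=\{f(n)\bmod p : (f(n),p)=(f'(n),p)=1\}$ and $\cB=\cA\cup\{0\}$, notes $|\cA|\geq\lceil(p-7)/4\rceil$ (each value has at most four preimages since $\deg\bar f\leq 4$), and then iterates the Cauchy--Davenport inequality (Theorem~\ref{thm: Cauchy Davenport inequality}) to get $|\cA+(s-1)\cB|\geq\min\{p,\,s\lceil(p-7)/4\rceil\}=p$ for $p\geq 11$, $s\geq 17$, so every residue class, in particular $m$, is represented with the required constraint on the first coordinate. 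Your approach is analytic instead: you fix an admissible $n_1$, express the count $N(c)$ of completions $(n_2,\dots,n_s)$ by orthogonality, and beat the off-diagonal term using the Weil bound $|T_f(t)|\leq 3\sqrt p$ sharpened by the Parseval identity $\sum_{t\neq 0}|T_f(t)|^2\leq(d-1)p^2$, reducing positivity to $p^{(s-3)/2}>3^{s-2}$, which just clears the bar at $(p,s)=(11,17)$ and is monotone in $s$ because $\sqrt{11}>3$. Both arguments are sound. The Cauchy--Davenport route is more elementary (no Weil estimate, no character-sum machinery) and sidesteps the tight numerics at $p=11$; your route is heavier but yields a quantitative asymptotic $N(c)=p^{s-2}+O(3^{s-2}p^{(s-1)/2})$ rather than bare existence, which is closer in spirit to what one eventually wants for lower-bounding the local densities $T_m(p)$. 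Your observation that the crude Weil bound alone ($\frac{p-1}{p}(3\sqrt p)^{s-1}$) fails at $p=11$, $s=17$, and that the $\ell^2$ refinement is precisely what saves it, is correct and a nice point.
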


To prove Lemma \ref{lem:M-star-p}, we use the following well-known Cauchy-Davenport inequality in additive combinatorics, see \cite{davenport1935addition}.

\begin{thm}[Cauchy-Davenport inequality]\label{thm: Cauchy Davenport inequality}
 Suppose that $\cA$ is a set of $r$ residue classes modulo $q$, and that $\cB$ is a set
of $s$ such classes. Suppose further that $0 \in \cB$, and that whenever $b \in \cB$ and $q \nmid b$, one has $(b, q) = 1$. Then   
\[ |\cA + \cB| \geq \min\{q, |\cA| + |\cB| - 1\}.\]
\end{thm}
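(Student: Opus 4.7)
The plan is to apply the Cauchy--Davenport inequality (Theorem \ref{thm: Cauchy Davenport inequality}) to a suitable iterated sumset in $\F_p$. Define
\[
\cB = \{f(n) \bmod p : 1 \leq n \leq p\}, \qquad \cA = \{f(n) \bmod p : 1 \leq n \leq p,\ p \nmid f(n),\ p \nmid f'(n)\},
\]
so that $\cA \subseteq \cB \subseteq \F_p$. By orthogonality (expanding $\cM_m^*(p)$ via additive characters), $\cM_m^*(p) \geq 1$ precisely when the residue $m \bmod p$ lies in the sumset $\cA + (s-1)\cB$. Hence it suffices to prove $\cA + (s-1)\cB = \F_p$.

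The first step is to verify that $0 \in \cB$. For $p \geq 11$, each of $\binom{p}{2}$, $\binom{p}{3}$, $\binom{p}{4}$ is divisible by $p$, so $f(p) \equiv 0 \pmod p$, placing $0 \in \cB$. Since $p$ is prime, the coprimality hypothesis on $\cB$ in Theorem \ref{thm: Cauchy Davenport inequality} is automatic. Iterating the inequality $s - 1 = 16$ times then yields
\[
|\cA + 16\cB| \geq \min\bigl\{p,\ |\cA| + 16(|\cB| - 1)\bigr\},
\]
which reduces the task to showing $|\cA| + 16(|\cB| - 1) \geq p$.

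The sizes of $\cA$ and $\cB$ are controlled by a case analysis on $d := \deg_{\F_p} f$, which lies in $\{1, 2, 3, 4\}$ depending on which of $A$, $B$, $C$ are divisible by $p$ (the coefficient of $n$ in $f$ equals $1$, so $d \geq 1$ always). Since any nonzero polynomial of degree $d$ over $\F_p$ has at most $d$ roots, every fiber of $f \colon \F_p \to \F_p$ has size at most $d$, so $|\cB| \geq p/d$. Likewise, at most $d$ values of $n$ satisfy $f(n) \equiv 0$ and at most $d - 1$ satisfy $f'(n) \equiv 0$, giving $|\cA| \geq (p - 2d + 1)/d$. Substituting, for $d \in \{2, 3, 4\}$ one obtains $|\cA| + 16(|\cB| - 1) \geq 17p/d - 18 + 1/d$, which exceeds $p$ whenever $p \geq 6$; when $d = 1$, $f$ is a bijection on $\F_p$, $|\cA| = p - 1$, and the bound is immediate. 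All four cases therefore succeed for $p \geq 11$.

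The main obstacle is carrying out the degree-drop case analysis: as $A$, then $B$, then $C$ vanish modulo $p$ in turn, the reduced degree $d$ decreases, and one has to recount the zeros of $f$ and $f'$ and redo the fiber bound in each regime so that the simple estimate on $|\cA|$ and $|\cB|$ above remains valid. Once these elementary counts are in place, the iterated Cauchy--Davenport step is automatic and Lemma \ref{lem:M-star-p} follows.
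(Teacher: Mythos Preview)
Your proposal does not address the stated theorem. Theorem~\ref{thm: Cauchy Davenport inequality} is the Cauchy--Davenport inequality itself, which the paper simply quotes from \cite{davenport1935addition} without proof; your write-up instead \emph{applies} that theorem as a black box (in your opening sentence) in order to prove Lemma~\ref{lem:M-star-p}. So as a proof of the statement actually given, nothing has been shown.

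If your intent was to prove Lemma~\ref{lem:M-star-p}, then your argument is correct and close in spirit to the paper's, with one genuine difference. The paper takes $\cB=\cA\cup\{0\}$, so that $|\cB|=|\cA|+1$, and uses the single uniform bound $|\cA|\ge\lceil(p-7)/4\rceil$ (at most four roots of $f$ and three of $f'$, fibers of size at most four); iterated Cauchy--Davenport then yields $|\cA+(s-1)\cB|\ge\min\{p,\,s|\cA|\}$, which already equals $p$ for $p\ge 11$ and $s\ge 17$. You instead take $\cB$ to be the full image of $f$ over $\F_p$ and run a case analysis on $d=\deg_{\F_p}f\in\{1,2,3,4\}$, getting the sharper bounds $|\cB|\ge p/d$ and $|\cA|\ge(p-2d+1)/d$ in each regime. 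Both routes succeed; yours extracts a little more when $p\mid A$ (smaller $d$), at the cost of the case split, while the paper's uniform estimate sidesteps the split because the $d=4$ bound is already valid for all $d\le 4$.
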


\begin{proof}[Proof of Lemma \ref{lem:M-star-p}]
 Let 
\[ \cA = \{ f(n) ~\mathrm{ mod }~ p : 1 \leq n \leq p, (f(n), p) = (f'(n), p) = 1 \}\]
and $\cB = \cA \cup \{0\}$. Since there are at most $7$ possibilities of $n$ such that $(f(n),p) = p$ or $(f'(n),p) = p$, we have $|\cA| \geq \left\lceil\frac{p-7}{4}\right\rceil$ and $|\cB| = |\cA| + 1$.   
 By applying Cauchy-Davenport inequality and induction on $s$, we obtain 
\[ |\cA +  (s-1)\cB| \geq \min\left\{p,  s\left\lceil \frac{p-7}{4}\right\rceil\right\} = p\]
when $p \geq 11$ and $s \geq 17$. Hence, there exist $a \in A$ and $b_2, \dots, b_s \in \cB$ such that 
\[f(a) + f(b_2) + \dots + f(b_s) \equiv m~(\mathrm{mod}~p).\]
By definition of $\cA$, we have $(f(a),p) = (f'(a),p) = 1$. Thus $\cM_m^*(p) \geq 1$.
\end{proof}

Using Lemma \ref{lem:M-star-p}  together with Hensel's lemma, we immediately obtain the following corollary.

\begin{coro}\label{coro:lower-bound-M_m(p^k)}
For any prime $p \geq 11$, any integers $k \geq 2$ and $s \geq 17$, we have 
$$\cM_m(p^k) \geq p^{(k-1)(s-1)}.$$     
\end{coro}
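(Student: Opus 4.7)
The plan is to treat the seed solution produced by Lemma~\ref{lem:M-star-p} as a base point and count lifts modulo $p^k$ by applying the classical one-variable Hensel's lemma to the first coordinate only, leaving the remaining $s-1$ coordinates free within their residue class mod $p$. By Lemma~\ref{lem:M-star-p} there exist $a_1,a_2,\dots,a_s\in\{1,\dots,p\}$ with $(f'(a_1),p)=1$ satisfying
\[
f(a_1)+f(a_2)+\dots+f(a_s)\equiv m\pmod{p}.
\]
The strategy is to fix the residues of $n_2,\dots,n_s$ modulo $p$ to be $a_2,\dots,a_s$ respectively, let those coordinates range over all $p^{k-1}$ representatives in $\{1,\dots,p^k\}$ within their chosen residue class, and then solve for $n_1$ via a unique Hensel lift of $a_1$.

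In detail, for each choice of $(n_2,\dots,n_s)\in\{1,\dots,p^k\}^{s-1}$ with $n_i\equiv a_i\pmod{p}$ (there are $p^{(k-1)(s-1)}$ such tuples), set
\[
M := m - f(n_2) - \dots - f(n_s)\in \Z/p^k\Z.
\]
Since $n_i\equiv a_i\pmod{p}$ implies $f(n_i)\equiv f(a_i)\pmod p$, the seed identity yields $f(a_1)\equiv M\pmod{p}$. Applying Hensel's lemma to the polynomial $g(x):=f(x)-M\in \Z[x]$ at the simple root $a_1$ (simple because $g'(a_1)=f'(a_1)\not\equiv 0\pmod p$ by the defining property of $\mathcal M_m^*(p)$) produces a unique $n_1\in\{1,\dots,p^k\}$ with $n_1\equiv a_1\pmod{p}$ and $f(n_1)\equiv M\pmod{p^k}$. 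The resulting tuple $(n_1,n_2,\dots,n_s)$ is a solution of \eqref{eq:sum-f(n)=m mod q} with $q=p^k$, and distinct tuples $(n_2,\dots,n_s)$ manifestly give distinct solutions. Hence $\mathcal M_m(p^k)\ge p^{(k-1)(s-1)}$, as claimed.

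The main obstacle is already behind us: it lay in securing a seed solution with the non-vanishing derivative condition $(f'(a_1),p)=1$, which was the content of Lemma~\ref{lem:M-star-p} and rested on the Cauchy--Davenport inequality. Once that seed is available, the lifting step is entirely routine, the only minor point being to verify that the hypothesis of Hensel's lemma is indeed met — namely that $a_1$ is a simple root of $g$ modulo $p$ — which is immediate from the construction. No special handling is needed when $k=1$, since in that case the bound $p^{(k-1)(s-1)}=1$ is supplied directly by Lemma~\ref{lem:M-star-p}.
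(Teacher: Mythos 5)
Your argument is exactly the one the paper intends when it says the corollary follows ``immediately'' from Lemma~\ref{lem:M-star-p} and Hensel's lemma: fix a seed solution with a unit derivative in the first coordinate, let the other $s-1$ coordinates range freely over the $p^{(k-1)(s-1)}$ lifts of their residues, and Hensel-lift $n_1$ uniquely each time. The only nit is that $f(x)-M$ is not in $\Z[x]$ but in $\Z[1/24][x]$; since $p\geq 11$ this is harmless (the coefficients are $p$-adic units times integers, or one can pass to $24f$), and the rest of the reasoning is correct.
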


Now we are left to discuss  lower bounds for $T_m(p)$ for $p\leq 7$. To this end, we rely on the generalized Hensel's Lemma.
\begin{lem}[generalized Hensel's Lemma, \cite{niven1991}]
    Let $f(x)$ be a polynomial with integral coefficients. Suppose that there exist $j,\tau\in\N$ such that $f'(a)\equiv 0\bmod p^j$, $p^\tau||f'(a)$ and that $j\geq 2\tau+1$. If $b\equiv a\bmod p^{j-\tau}$, then $f(b)\equiv f(a)\bmod p^j$ and $p^\tau||f'(b)$. Moreover, there is a unique $a_0\bmod p$ such that $f(a+a_0p^{j-\tau})\equiv 0\bmod p^{j+1}$. 
\end{lem}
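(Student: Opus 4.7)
The strategy is to derive both conclusions of the lemma from the Taylor expansion
\[
f(a+h) = f(a) + f'(a)\,h + \sum_{i\ge 2}\frac{f^{(i)}(a)}{i!}\,h^{i},
\]
valid for any $f\in\Z[x]$ and $h\in\Z$, whose coefficients $f^{(i)}(a)/i!$ are all integers (most cleanly seen by expanding $f$ in the basis of binomial coefficients $\binom{x}{k}$). Everything then reduces to tracking the $p$-adic valuation of each term on the right, with the crucial numerical input being that $j\ge 2\tau+1$ gives $2(j-\tau)\ge j+1$ --- the ``quadratic convergence'' inequality driving Newton iteration over $\Z_p$.

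For the first conclusion, I would write $b = a + cp^{j-\tau}$ with $c\in\Z$ and apply the identity with $h = cp^{j-\tau}$. The $i=1$ term $f'(a)\cdot cp^{j-\tau}$ has $p$-adic valuation at least $\tau + (j-\tau) = j$ because $p^{\tau}\mid f'(a)$, and each $i\ge 2$ term has valuation at least $i(j-\tau) \ge 2(j-\tau) \ge j+1$, so summing gives $f(b)\equiv f(a)\pmod{p^{j}}$. The claim that $p^\tau\Vert f'(b)$ will follow by applying the same Taylor identity to $f'$ in place of $f$: one has $f'(b)-f'(a) = \sum_{i\ge 1}\frac{f^{(i+1)}(a)}{i!}(b-a)^{i}$, each summand divisible by $(b-a) = cp^{j-\tau}$ and hence by $p^{\tau+1}$ since $j-\tau\ge\tau+1$, so $f'(b)\equiv f'(a)\pmod{p^{\tau+1}}$ and the exact power of $p$ is preserved.

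For the second conclusion, I seek $a_0$ satisfying $f(a + a_0 p^{j-\tau})\equiv 0\pmod{p^{j+1}}$ by setting $h = a_0 p^{j-\tau}$ in the Taylor expansion. The terms with $i\ge 2$ are again $\equiv 0\pmod{p^{j+1}}$ by the same $2(j-\tau)\ge j+1$ bound, so the required congruence collapses to $f(a) + f'(a)\, a_0 p^{j-\tau} \equiv 0\pmod{p^{j+1}}$. Writing $f(a) = p^{j}A$ (using the divisibility of $f(a)$ by $p^{j}$ implicit in the hypothesis) and $f'(a) = p^{\tau}B$ with $(B,p)=1$, dividing through by $p^{j}$ reduces everything to the linear congruence $A + Ba_0\equiv 0\pmod{p}$, which has a unique solution $a_0\pmod p$ since $B$ is a unit modulo $p$.

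There is no real conceptual obstacle; the argument is pure bookkeeping of $p$-adic valuations inside a Taylor expansion, and the two inequalities $i(j-\tau)\ge j+1$ (for $i\ge 2$) and $j-\tau\ge\tau+1$ both follow immediately from $j\ge 2\tau+1$. The only subtle point is that the integrality of each coefficient $f^{(i)}(a)/i!$ is what licenses the term-by-term valuation comparison; this is where the hypothesis $f\in\Z[x]$ enters in an essential way, and is presumably why the statement is cited to Niven \cite{niven1991} rather than reproved here in detail.
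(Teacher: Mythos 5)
The paper itself offers no proof of this lemma; it is cited directly to Niven, Zuckerman, and Montgomery \cite{niven1991} as a standard result. Your argument therefore cannot be compared against a paper-internal proof, but it is a correct, self-contained derivation of the generalized Hensel lemma, following the standard Newton-iteration route: expand $f(a+h)$ in the integer Taylor basis, track $p$-adic valuations term by term, and observe that $j\ge 2\tau+1$ forces both $2(j-\tau)\ge j+1$ (which kills the quadratic and higher terms mod $p^{j+1}$) and $j-\tau\ge\tau+1$ (which preserves the exact power $p^\tau\Vert f'$). The reduction of the lifting step to the linear congruence $A+Ba_0\equiv 0\pmod p$ with $B$ a unit is exactly right, and your remark that integrality of $f^{(i)}(a)/i!$ is the load-bearing fact is well placed.

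One point worth flagging: as printed, the lemma's hypothesis reads ``$f'(a)\equiv 0\bmod p^j$,'' which together with $p^\tau\Vert f'(a)$ and $j\ge 2\tau+1>\tau$ would be contradictory. This is plainly a typographical slip in the paper for $f(a)\equiv 0\bmod p^j$, i.e.\ the root hypothesis of Hensel's lemma, and you have (implicitly and correctly) read it that way --- your invocation of ``the divisibility of $f(a)$ by $p^j$ implicit in the hypothesis'' is exactly the needed correction. It would be worth stating that emendation explicitly rather than leaving it to the reader to infer.
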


\begin{lem}\label{lem:lower-bound-T_m(p)}
   For $s \geq 17$ and any $f$ satisfying \eqref{defn: f(n)}, we have 
   \[ T_m(p) > p^{1-s},\]
for $p\geq 11$. For $p<11$, define
\begin{align}
\tau = \max_{p<11} \min\{\tau\in \N: f'(y)\neq 0\bmod p^\tau \text{ for some integer $y$}\}.\label{def of tau}
\end{align}
Then,
   \[
T_m(p) > p^{(2\tau+1)(1-s)}.
\]
In particular, when $f$ is one of the target polynomials $f_1,f_2,f_3$, defined in \eqref{target polynomials}, we have
\[ T_m(p) > p^{1-s},\]
for $p\geq 3$, and 
\[
T_m(2) > 2^{5(1-s)}.
\]
\end{lem}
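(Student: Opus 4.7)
The plan is to lower-bound $\cM_m(p^k)$ for every prime $p$ and every sufficiently large $k$, then pass to the limit in $T_m(p)=\lim_{k\to\infty}p^{k(1-s)}\cM_m(p^k)$. The analysis naturally splits into two cases: for $p\geq 11$ I would apply classical Hensel lifting, while for $p<11$ the generalized Hensel lemma recalled just before the statement is required.

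For $p\geq 11$ the starting point is Lemma \ref{lem:M-star-p}, which produces a mod-$p$ solution $(n_1^{(0)},\dots,n_s^{(0)})$ of $f(n_1)+\cdots+f(n_s)\equiv m\pmod p$ with $(f'(n_1^{(0)}),p)=1$. For each of the $p^{(k-1)(s-1)}$ tuples $(\tilde n_2,\dots,\tilde n_s)\in\{1,\dots,p^k\}^{s-1}$ reducing to $(n_2^{(0)},\dots,n_s^{(0)})$ modulo $p$, classical Hensel applied to the univariate polynomial $x\mapsto f(x)+f(\tilde n_2)+\cdots+f(\tilde n_s)-m$ supplies a unique $\tilde n_1\equiv n_1^{(0)}\pmod p$ solving the full congruence modulo $p^k$. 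This yields $\cM_m(p^k)\geq p^{(k-1)(s-1)}$ and hence $T_m(p)\geq p^{1-s}$. For the strict inequality I would note that the sumset step inside the proof of Lemma \ref{lem:M-star-p} in fact produces at least two admissible first coordinates once $|\cA|\geq 2$, which is automatic for $p\geq 13$; a short direct argument dispatches the remaining case $p=11$.

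For $p<11$ the derivative $f'$ may vanish identically modulo $p$, so classical Hensel must be replaced with the generalized version. Denoting by $\tau_p$ the inner minimum attached to $p$ in \eqref{def of tau}, the plan has two steps: (i) exhibit a seed solution modulo $p^{2\tau_p+1}$ in which the first coordinate attains the minimal valuation of $f'$; (ii) apply generalized Hensel to this seed, producing $p^{(k-2\tau_p-1)(s-1)}$ lifts modulo $p^k$ as the remaining $s-1$ coordinates vary over lifts of the seed modulo an appropriate lower power of $p$, for every $k\geq 2\tau_p+1$. This yields $T_m(p)\geq p^{(2\tau_p+1)(1-s)}\geq p^{(2\tau+1)(1-s)}$, the last inequality using $\tau_p\leq\tau$ together with $1-s<0$. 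Step (i) is the principal obstacle since Cauchy--Davenport is not available in $\Z/p^{2\tau_p+1}\Z$; I would fix the first coordinate to be a specific lift of a residue modulo $p$ realising the minimum valuation of $f'$ and verify, either exhaustively for small $p$ or via a short pigeonhole/character-sum estimate that exploits $s\geq 17$, that the remaining $s-1$ coordinates can be chosen modulo $p^{2\tau_p+1}$ to attain any target residue of $m$.

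The explicit estimates for $f_1,f_2,f_3$ reduce to direct computation of $\tau_p$ at $p\in\{2,3,5,7\}$. For each $p\in\{3,5,7\}$, evaluating the derivatives at small integers shows that at least one of $f_1'(1)=4$, $f_2'(1)=10$, $f_3'(1)=30$ (or an alternative point such as $n=2$) is a $p$-adic unit, so classical Hensel applies and $T_m(p)>p^{1-s}$. At $p=2$ the binding polynomial is $f_1$: one computes $v_2(f_1'(n))=2+v_2(n)$, so the minimal valuation is $2$, while $f_2',f_3'$ already attain valuation $0$ at $n=2$. The corresponding generalized Hensel construction then yields $T_m(2)>2^{5(1-s)}$ uniformly across the three target polynomials. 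The most delicate point, and the one I would devote most effort to, is the seed construction for $f_1$ modulo $2^5$: restricting $n_1$ to odd lifts of $1\pmod 2$ limits the first coordinate, so one must confirm that the remaining sixteen coordinates still cover every residue of $m$ modulo $32$, a verification that ultimately leverages the assumption $s\geq 17$.
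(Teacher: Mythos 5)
Your overall plan reproduces the paper's: for $p\ge 11$, use Lemma \ref{lem:M-star-p} to produce a seed mod $p$ with unit derivative and lift with classical Hensel to get $\cM_m(p^k)\ge p^{(k-1)(s-1)}$; for $p<11$, produce a seed mod $p^{2\tau+1}$ and lift with generalized Hensel to get $p^{(k-2\tau-1)(s-1)}$ solutions mod $p^k$ for $k\ge 2\tau+1$. The one place you deviate is the seed construction at level $p^{2\tau+1}$, which you flag as the ``principal obstacle'' on the grounds that Cauchy--Davenport is not available for a prime-power modulus, falling back on an exhaustive check or an unspecified pigeonhole/character-sum estimate. In fact, Theorem \ref{thm: Cauchy Davenport inequality} as quoted in the paper holds for an arbitrary modulus $q$, provided $0\in\cB$ and every nonzero $b\in\cB$ is coprime to $q$. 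The paper uses exactly this: it sets $\cA=\{f(n)\bmod 2^{2\tau+1}: (f(n),2)=1\}$ and $\cB=\cA\cup\{0\}$, so the coprimality hypothesis is automatic, and $|\cA+(s-2)\cB|=2^{2\tau+1}$ follows by the same sumset iteration as in Lemma \ref{lem:M-star-p} once $s\ge 17$. This gives the seed uniformly in $f$ without any case-by-case verification. Your exhaustive alternative does work for the three target polynomials (where $p^{2\tau+1}\le 32$), so this is not a fatal gap, but you are replacing a clean uniform step with a more laborious one because of a misconception about the applicable range of Cauchy--Davenport. Two smaller remarks: the strict inequality $T_m(p)>p^{1-s}$ you attempt to secure for $p\ge 11$ is not actually delivered by the paper's argument either (the lifting count gives $\ge$), and it is not used downstream since Lemma \ref{lower bound for S(m)} only needs the non-strict bound; and your claim that $f_2',f_3'$ attain $2$-adic valuation $0$ at $n=2$ needs care, since the $f_i'$ are not integer-valued polynomials -- the paper sidesteps this by verifying only $f_1$ explicitly, where $f_1'(1)=4$ gives $\tau=2$ and hence the exponent $2\tau+1=5$, and asserting the others are similar.
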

\begin{rem}
    Since $f'$ is a degree-three polynomial, the integer $y$ in \eqref{def of tau} always exists for infinitely many $\tau\in\N$ by the property of the ring of $p$-adic integers.
\end{rem}
\begin{proof}

When $p\geq 11$, by Corollary \ref{coro:lower-bound-M_m(p^k)}, we have 
\[ T_m(p) = \lim_{k \to \infty} p^{k(1-s)} \cM_m(p^k) \geq p^{1-s} > 0\]
for any $p \geq 11$ and $s \geq 17$. Now assume $p<11$. We will focus on $p=2$, as the other small $p$ cases follow a similar argument. 
 
Fix $y_1\in\N$ such that $f'(y_1)\neq 0\bmod 2^\tau$ for some $\tau\geq 1$. If $\tau=1$, then by using Hensel's lemma, we obtain $T_m(2)>2^{1-s}$. Otherwise, define 
\[ \cA = \{ f(n) ~\mathrm{ mod }~ 2^{2\tau+1} : 1 \leq n \leq 2^{2\tau+1}, (f(n), 2) = 1 \}\]
and $\cB = \cA \cup \{0\}$. Then, using similar arguments as in the proof of Lemma \ref{lem:M-star-p}, and since $s\geq 17$, we obtain
\[ |\cA +  (s-2)\cB| \geq \min\left\{2^{2\tau+1},  (s-2)\left\lceil \frac{2^{2\tau+1}-4}{4}\right\rceil\right\} = 2^{2\tau+1}.\]
Therefore, there always exist $y_2,\dots,y_s$ such that 
\[
f(y_2)+\dots+f(y_s)\equiv m-f(y_1)\bmod 2^{2\tau+1}.
\]
Now apply generalized Hensel's Lemma, for any $k\geq 2\tau+1$, $(y_1,y_2,\cdots,y_s)$ can be lifted to $2^{(k-2\tau-1)(s-1)}$ solutions to 
\begin{align}\label{solutions to 2 to the kth power}
f(n_1)+f(n_2)+\cdots+f(n_s)\equiv m \bmod p^k.
\end{align}
 Thus, the first part of the lemma is proved.

To prove the remaining statement, it suffices to find $\tau$ for each $f_i$ and $p<11$. We will present $f=f_1$ here, as the other two polynomials follow similar arguments. Assume $p=2$. Recall that $f(0)=0$ and $f(1)=1$. Fix $n_1=1$. Note that $f'(1)=4$, which is congruent to $0$ modulo $4$ but not congruent to $0$ modulo $8$. Then, by the argument above, $(1,y_1,\cdots,y_s)$ can be lifted to $2^{(k-5)(s-1)}$ solutions to \eqref{solutions to 2 to the kth power}. Thus, 
\[
T_m(2)\geq 2^{5(1-s)}.
\]

Finally, for $3\leq p\leq 7$, note that $f'(1)=4$ is not congruent to $0$ modulo $p$, so 
\[
T_m(p)\geq p^{1-s}. 
\]
\end{proof}
\subsection{A lower bound for $\mathfrak{S}_{f,s}(m)$}
We are ready to establish a lower bound for $\mathfrak{S}_{f,s}(m)$.
\begin{lem}\label{lower bound for S(m)}
For any polynomial $f$ in the form of \eqref{defn: f(n)}, let $\tau$ be as defined in \eqref{def of tau}. We then have $\mathfrak{S}_{f,s}(m)>0$. More precisely, 
\begin{align}\label{Sigma Bound for general f}
 \mathfrak{S}_{f,s}(m) \geq 2^{\frac{z^2}{146-9s}}\exp((12\tau+1.03z)(1-s))>0
\end{align}
uniformly, where $z=(2 e^{se^{932}}+1)^{\frac{73}{9s - 21}}$. In particular, for the three target polynomials $f_1,f_2$, and $f_3$, we have
\begin{align}\label{Sigma Bound}
 \mathfrak{S}_{f,s}(m) \geq 2^{\frac{z^2}{146-9s}}\exp(1.03z(1-s))>0.
\end{align}
\end{lem}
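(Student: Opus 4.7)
The plan is to exploit the Euler product $\mathfrak{S}_{f,s}(m) = \prod_p T_m(p)$ established in the previous subsection, split it at the cutoff $z$, and lower-bound the two pieces using Lemma \ref{lem:lower-bound-T_m(p)} (which keeps each $T_m(p)$ positive) and Lemma \ref{lem::bound-Tm(p)} (which pins large $T_m(p)$ near $1$), respectively.

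First I would handle the primes $p \leq z$. Combining the two regimes of Lemma \ref{lem:lower-bound-T_m(p)}, the four small primes $p \in \{2,3,5,7\}$ contribute at least $210^{(2\tau+1)(1-s)}$, while the larger primes $11 \leq p \leq z$ contribute $\prod_{11 \leq p \leq z} p^{1-s} = (e^{\theta(z)}/210)^{1-s}$. Multiplying, this sub-product is at least $210^{2\tau(1-s)} e^{\theta(z)(1-s)}$. I will then apply the elementary inequality $\log 210 < 6$ together with the Chebyshev-type bound $\theta(z) \leq 1.03 z$ (valid since $z$ is astronomically large, so Rosser--Schoenfeld-type estimates hold with ample slack); this yields the factor $\exp((12\tau + 1.03z)(1-s))$.

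Next, for $p > z$, Lemma \ref{lem::bound-Tm(p)} combined with $p^{1-9s/73} \leq 2^{1-9s/73} < 1/2$ (using $s \geq 17$, so that $9s/73 > 2$) yields the cleaner bound $|T_m(p) - 1| \leq 2 e^{se^{932}} p^{1-9s/73}$. The choice of $z$ is calibrated exactly so that $|T_m(p) - 1| \leq 1/2$ for all $p > z$, which lets me use $\log T_m(p) \geq -2|T_m(p) - 1|$. Summing and comparing to
\[
\sum_{p > z} p^{1 - 9s/73} \leq \int_z^\infty x^{1 - 9s/73}\, \dd x = \frac{73\, z^{(146-9s)/73}}{9s - 146}
\]
(convergent since $9s/73 > 2$) yields a bound of the shape $-C\, e^{se^{932}} z^{(146-9s)/73}/(9s-146)$ for $\sum_{p>z} \log T_m(p)$. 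Plugging in $z = (2 e^{se^{932}} + 1)^{73/(9s-21)}$ and manipulating algebraically reduces the target inequality $\prod_{p > z} T_m(p) \geq 2^{z^2/(146-9s)}$ to an elementary inequality of the form $y^{21/(9s-21)} \geq 146/\log 2$, where $y = 2 e^{se^{932}} + 1$; this is overwhelmingly true given the size of $y$. Multiplying the two sub-bounds delivers the general estimate \eqref{Sigma Bound for general f}.

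For the sharper \eqref{Sigma Bound} valid for $f_1, f_2, f_3$, I will replace the generic small-prime estimate by the stronger $T_m(2) > 2^{5(1-s)}$ and $T_m(p) > p^{1-s}$ for $p \in \{3,5,7\}$ from Lemma \ref{lem:lower-bound-T_m(p)}. This gives $\prod_{p \leq z} T_m(p) \geq 2^{4(1-s)} e^{\theta(z)(1-s)}$, and since $z$ is astronomical we further have $\theta(z) + 4\log 2 \leq 1.03 z$, absorbing the small-prime contribution into the Chebyshev factor and eliminating the $12\tau$ term. The main obstacle throughout is the coordinated choice of $z$: it must simultaneously keep $|T_m(p)-1| \leq 1/2$ on the tail, permit the Chebyshev slack $\theta(z) \leq 1.03 z$, and align the tail integral with the prescribed $z^2/(146-9s)$ exponent of $2$. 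Once this bookkeeping is set up, the rest of the proof is routine.
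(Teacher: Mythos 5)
Your proof follows the paper's proof in all essentials: split the Euler product $\prod_p T_m(p)$ at the cutoff $z$, bound $\prod_{p \leq z} T_m(p)$ from below via the Hensel-lifting Lemma~\ref{lem:lower-bound-T_m(p)} together with $\theta(z) \leq 1.03z$ from Rosser--Schoenfeld, and bound $\prod_{p > z} T_m(p)$ from below using Lemma~\ref{lem::bound-Tm(p)} together with a tail integral $\int_z^\infty x^{1-9s/73}\,\dd x$. The intermediate constants differ only cosmetically (you use $\log T_m(p) \geq -2|T_m(p)-1|$ where the paper expands $-\log(1+\theta_p)$ as a series; your closing check $y^{21/(9s-21)} \geq 146/\log 2$ is the paper's $z^{21/73} \geq 146$ up to the factor $\log 2$), so the two arguments are the same argument. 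One shared caveat worth being aware of: your assertion that $z = (2e^{se^{932}}+1)^{73/(9s-21)}$ is ``calibrated exactly'' so that $|T_m(p)-1| \leq \tfrac12$ for all $p > z$ does not actually follow from Lemma~\ref{lem::bound-Tm(p)}; solving $e^{se^{932}}\frac{p^{1-9s/73}}{1-p^{1-9s/73}} \leq \tfrac12$ gives $p \geq (2e^{se^{932}}+1)^{73/(9s-73)}$, so the exponent should be $73/(9s-73)$ rather than $73/(9s-21)$. Since the paper's own proof makes the identical assertion at the same point, this is not a gap you introduced, but it is a genuine discrepancy between the stated value of $z$ and the requirement that the Taylor/log step imposes.
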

\begin{proof}
Define $\theta_p := T_m(p)-1$ for prime $p$ and let $w = \prod_{p > z} T_{m}(p)^{-1}$. From Lemma \ref{lem::bound-Tm(p)}, it follows that for $p > z$, $\left|\theta_p\right|\leq\frac{1}{2}$.
By the Taylor expansion, we write 
\begin{align}\label{Taylor Expansion}
    \log w  
    &= - \sum_{p > z} \log(1 + \theta_{p})=\sum_{p > z} \left(\theta_{p} + \frac{\theta^{2}_{p}}{2} + \frac{\theta^{3}_{p}}{3} + \cdots \right).
\end{align}
Thus, by Lemma \ref{lem::bound-Tm(p)} and \eqref{Taylor Expansion}, we have
\begin{align*}
    \log w &\leq 4 \log 2 \cdot e^{se^{932}}\sum_{p > z} p^{1-\frac{9s}{73}} \leq 4 \log 2 \cdot e^{se^{932}} \int^{\infty}_{z} x^{1-\frac{9s}{73}}  \dd x
= \frac{146z^{125/73}\log 2}{9s-146}.
\end{align*}
Exponentiating both sides of the above inequality, we obtain
\[
 w \leq  2^{\frac{146z^{125/73}}{9s-146}},
\]
which implies that
\begin{align}
\prod_{p > z} T_{m}(p)\geq 2^{\frac{146z^{125/73}}{146-9s}}\geq 2^{\frac{z^2}{146-9s}} .  \label{Tm(p) p>M}  
\end{align}
For $p\leq z$, Lemma \ref{lem:lower-bound-T_m(p)} shows that
\begin{align}
\prod_{p \leq z} T_m(p) &\geq 210^{(2\tau+1)(1-s)}\prod_{11\leq p \leq z} p^{1-s}\notag\\
& \geq 210^{2\tau(1-s)}\exp\bigg((1-s)\left(0.0242334\frac{z}{\log z}+z\right)\bigg)\\
&\geq \exp((1.03z+12\tau)(1-s))\label{Tm(p) p<= M},
\end{align}
where the second inequality follows from Rosser and Schoenfeld's work \cite{Rosser1975} on upper bounds for $\theta(x)$. For $f_1, f_2,$ and $f_3$, we have
\[
\prod_{p \leq z} T_m(p)\geq \exp(1.03z(1-s)).
\]
Combining inequalities \eqref{Tm(p) p>M} and \eqref{Tm(p) p<= M}, we have for a general $f$ satisfying \eqref{defn: f(n)},
\begin{align*}
     \mathfrak{S}_{f,s}(m)&\geq 2^{\frac{z^2}{146-9s}}\exp((1.03z+12\tau)(1-s))>0.
\end{align*}
For the target polynomials $f_1,f_2,f_3$, we have
\[
\mathfrak{S}_{f_i,s}(m)\geq 2^{\frac{z^2}{146-9s}}\exp(1.03z(1-s))>0,
\]
which completes the proof.
\end{proof}

\section{Major Arcs : The Singular Integral}\label{sec: Major Arcs Sec III}  In this section, we estimate the singular integral $J^{*}(m)$ given by \eqref{J* definition}. In light of the discussion in Lemma \ref{lem::Singular Series Extension}, we assume $N^{\delta} \geq  e^{e^{467}}$. We proceed as follows: First, we estimate the integral $v(\theta)$ in the integrand of $J^{*}(m)$ by the finite sum $v_1(\theta)$, defined in \eqref{v_1 definition} below. By replacing $v(\theta)$ in $J^{*}(m)$ with $v_1(\theta)$, we may extend the range of this new integral to a unit interval, and then approximate it using properties of Gamma functions. 
\subsection{Approximation of $v(\theta)$} Let $N_0 =\frac{A}{24}N^4$ where $N$ is given by \eqref{Defining N}. Define
\begin{align}
    v_1(\theta):= \frac{1}{4}\sum_{1 \leq n \leq N_0} n^{-3/4}e(\theta n) \quad \textrm{and} \quad v_2(\theta) := \int_0^{N_0^{1/4}} e(\theta t^4) \dd t.\label{v_1 definition}
\end{align}
\begin{lem}\label{v_3(theta)-v_1(theta) bound lemma}
Let $v(\theta)$ and $v_1(\theta)$ be as defined in \eqref{v theta definition} and \eqref{v_1 definition} respectively. Then
\[\bigg \lvert v_1(\theta)-\left(\frac{A}{24}\right)^{1/4}v(\theta)\bigg \rvert \leq AN^\delta.
\]
\end{lem}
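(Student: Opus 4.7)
The plan is to perform the substitution $u = At^4/24$ in $v(\theta) = \int_1^N e(At^4\theta/24)\, dt$. Since $t = (24u/A)^{1/4}$ gives $dt = \tfrac{1}{4}(24/A)^{1/4} u^{-3/4}\, du$, one obtains
\[
\left(\frac{A}{24}\right)^{\!1/4}\! v(\theta) = \frac{1}{4}\int_{A/24}^{N_0} u^{-3/4} e(\theta u)\, du.
\]
Writing $g(u) := u^{-3/4} e(\theta u)$ and $M := \lfloor N_0 \rfloor$, the quantity to estimate becomes
\[
v_1(\theta) - (A/24)^{1/4} v(\theta) = \frac{1}{4}\left(\sum_{n=1}^{M} g(n) - \int_{A/24}^{N_0} g(u)\, du\right),
\]
so the proof reduces to a careful comparison of a Riemann sum with its integral.

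For the bulk of the range, on each unit interval $[n,n+1]\subset [1,N_0]$ one has
$\left|g(n) - \int_n^{n+1} g(u)\, du\right| \leq \int_n^{n+1} |g'(u)|\, du$.
Since $g'(u) = -\tfrac{3}{4}u^{-7/4} e(\theta u) + 2\pi i\theta\, u^{-3/4} e(\theta u)$, summing and integrating gives
\[
\int_1^{N_0}|g'(s)|\, ds \leq \tfrac{3}{4}\int_1^{N_0}\! s^{-7/4}\, ds + 2\pi|\theta|\int_1^{N_0}\! s^{-3/4}\, ds \leq 1 + 8\pi|\theta|\, N_0^{1/4},
\]
and with $|\theta| \leq N^{\delta-4}$ and $N_0^{1/4} = (A/24)^{1/4} N$ this is at most $1 + 8\pi (A/24)^{1/4} N^{\delta - 3}$, which is negligible.

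What remains is to treat a few boundary terms. If $A\leq 24$ (so $A/24\leq 1$), the unmatched piece $\int_{A/24}^1 g(u)\, du$ is bounded in absolute value by $\int_{A/24}^{1}\! u^{-3/4}\, du \leq 4$, and the leftover fragments near $N_0$ (namely $g(M)$ and $\int_M^{N_0} g(u)\,du$) are of size $O(N_0^{-3/4})$. If $A>24$, set $m_0 := \lceil A/24\rceil$; then the ``extra'' sum $\sum_{n=1}^{m_0 - 1} g(n)$, which has no matching integral, is bounded crudely by $\sum_{n=1}^{m_0-1} n^{-3/4} \leq 4(A/24)^{1/4} \leq A^{1/4}$, and the residual fractional-length integrals near $A/24$ and near $N_0$ contribute $O((A/24)^{-3/4})+O(N_0^{-3/4})$. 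Collecting the contributions, the absolute difference is majorized by something of order $A^{1/4}$, and since $A\geq 1$ and $N\geq 1$ imply $A^{1/4}\leq A\leq AN^{\delta}$, the stated inequality follows.

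The only real difficulty is the bookkeeping of these boundary contributions in the case $A>24$, where the initial stub $\sum_{n<A/24} n^{-3/4}$ is what ultimately forces the $A$-dependence in the final bound; the oscillation of $e(\theta u)$ in the bulk is otherwise handled uniformly by the derivative bound above, so the generous shape $AN^{\delta}$ of the target estimate leaves ample room.
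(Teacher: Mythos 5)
Your argument is correct and reaches the stated bound, but it takes a modestly different and in fact sharper route than the paper. The paper introduces an intermediate integral $v_2(\theta)=\int_0^{N_0^{1/4}}e(\theta t^4)\,\dd t$, bounds $|v_2(\theta)-(A/24)^{1/4}v(\theta)|\le(A/24)^{1/4}$ by a trivial range comparison, and then compares $v_1$ with $v_2$ using Abel summation on $v_1$, integration by parts on $v_2$, and the crude pointwise bound $\big|t^{1/4}-\tfrac14\sum_{n\le t}n^{-3/4}\big|\le 3$ integrated against $2\pi|\theta|$ over $[1,N_0]$; this last step produces the dominant error $O(|\theta|N_0)\approx \tfrac{\pi A}{4}N^{\delta}$, which is why the final bound has the full $AN^{\delta}$ shape. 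You instead substitute $u=At^4/24$ directly to rewrite $(A/24)^{1/4}v(\theta)=\tfrac14\int_{A/24}^{N_0}u^{-3/4}e(\theta u)\,\dd u$, and then perform a unit-interval Riemann sum vs.\ integral comparison via $\int|g'|$; this keeps the $e(\theta u)$ oscillation coupled to $u^{-3/4}$, so the bulk error is $O(|\theta|N_0^{1/4})\approx(A/24)^{1/4}N^{\delta-3}$, which is tiny, and the surviving contribution is the $O(A^{1/4})$ boundary term. So you obtain roughly $A^{1/4}$ where the paper obtains $AN^{\delta}$; both of course suffice for the stated inequality. Two small accounting points worth tightening: you dropped the overall $\tfrac14$ prefactor when estimating the initial stub $\sum_{n<m_0}n^{-3/4}$, and $4(A/24)^{1/4}$ is about $1.81A^{1/4}$, not $\le A^{1/4}$; neither affects the conclusion given the slack $A^{1/4}\le A\le AN^{\delta}$.
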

\begin{proof}
We have
\begin{align}
   \bigg \lvert v_2(\theta) -\left(\frac{A}{24}\right)^{1/4}v(\theta) \bigg \rvert &\leq \int_0^{\sqrt[4]{\frac{A}{24}}} \left|e(\theta t^4)\right| \dd t=\left ( \frac{A}{24}\right)^{1/4}\label{|v_2(theta)-v_1(theta)|}.
\end{align}
Since $f(n) = n^{-3/4}$ is a decreasing function, we have
\begin{align*}
\int_1^{N_0+1}t^{-3/4}\dd t \leq \sum_{1 \leq n \leq N_0}n^{-3/4}\leq \int_0^{N_0}t^{-3/4} \dd t.
\end{align*}
Therefore, we get
\begin{align}\label{sum-integral bound}
\left|\frac{1}{4}\sum_{1 \leq n \leq N_0}n^{-3/4}-\frac{1}{4}\int_1^{N_0}t^{-3/4} \dd t\right| \leq \frac{1}{2} \left|\int_0^1 t^{-3/4} \dd t\right|= 2.
\end{align}
Since $\frac{1}{4}\int_1^{N_0}t^{-3/4}dt=N_0^{1/4}-1,$ we obtain
\begin{align}
\bigg|\frac{1}{4}\sum_{1 \leq n \leq N_0} n^{-3/4}-N_0^{1/4}\bigg|\leq 3.\label{intermediate step}
\end{align}
Using partial summation, we have
\begin{align}\label{Partial Summation v_1}
    v_1(\theta)=e(N_0\theta)\bigg(\frac{1}{4}\sum_{1 \leq n \leq N_0} n^{-3/4}\bigg)-2\pi i\theta\int_1^{N_0} \bigg(\frac{1}{4}\sum_{1 \leq n \leq t} n^{-3/4}\bigg)e(\theta t) \dd t.
\end{align}
On the other hand, using integration by parts and a change of variables, we obtain
\begin{align}\label{By Parts v_2}
v_2(\theta) = e(N_0\theta)N_0^{1/4}-2\pi i\theta\int_0^{N_0} t^{1/4} e(\theta t) \dd t.
\end{align} 
Combining \eqref{sum-integral bound}, \eqref{intermediate step}, \eqref{Partial Summation v_1} and \eqref{By Parts v_2}, we deduce that
\begin{align}
\left|v_1(\theta)-v_2(\theta) \right| &\leq \bigg \lvert \frac{1}{4}\sum_{1 \leq n \leq N_0} n^{-3/4}-N_0^{1/4}\bigg \rvert +2\pi|\theta|\bigg (1+ \int_1^{N_0} \bigg \lvert t^{1/4} - \bigg(\frac{1}{4}\sum_{1 \leq n \leq t} n^{-3/4}\bigg)\bigg \rvert \dd t  \bigg) \notag\\
&\leq3+2\pi|\theta|(3N_0-2) \label{v_1-v_2}.
\end{align}
Recall that $|\theta|<N^{\delta-4}$ and $N^{\delta}\geq  e^{e^{467}}$. Putting together \eqref{|v_2(theta)-v_1(theta)|} and \eqref{v_1-v_2}, we have
\begin{align*}
\bigg \lvert v_1(\theta)-\left(\frac{A}{24}\right)^{1/4}v(\theta)\bigg \rvert \leq  3+\pi(6N_0-4)N^{\delta-4}+\sqrt[4]{A/24} \leq AN^\delta,
\end{align*}
which completes the proof.
\end{proof}

\begin{lem}\label{V_1 theta Estimation}
Let $|\theta| \leq \frac{1}{2}$ and $v_1(\theta)$ be as in \eqref{v_1 definition}. Then $|v_1(\theta)|\leq \min \{2m^{1/4}, 2 |\theta|^{-1/4} \}$.
\end{lem}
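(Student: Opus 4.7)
The claim has two faces: a trivial bound independent of $\theta$, and a bound that becomes sharp for $|\theta|$ bounded away from $0$. I would prove them separately and take the minimum at the end.

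For the first bound $|v_1(\theta)|\leq 2m^{1/4}$, I would discard the oscillatory factor $e(\theta n)$ and estimate the tail sum of $n^{-3/4}$ against an integral, exactly as in inequality \eqref{intermediate step} from the proof of Lemma \ref{v_3(theta)-v_1(theta) bound lemma}. This yields
\[
|v_1(\theta)| \leq \frac{1}{4}\sum_{1\leq n \leq N_0} n^{-3/4} \leq N_0^{1/4}+3.
\]
By the choice \eqref{Defining N} of $N$ we have $N \leq (24m/A)^{1/4}+2$, so $N_0^{1/4}=(A/24)^{1/4}N \leq m^{1/4}+2(A/24)^{1/4}$. For $m$ sufficiently large relative to $A$ (which is guaranteed by $m > e^{e^{471}}$ in the target-polynomial case and by $m\geq m_f$ in general), this is comfortably less than $2m^{1/4}$.

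For the second bound $|v_1(\theta)|\leq 2|\theta|^{-1/4}$, I would apply Abel (partial) summation to $v_1(\theta)$ with $T(n):=\sum_{k\leq n} e(\theta k)$, giving
\[
|v_1(\theta)| \leq \tfrac{1}{4}|T(N_0)|\,N_0^{-3/4} + \tfrac{3}{16}\sum_{1\leq n < N_0} |T(n)|\, n^{-7/4},
\]
after using the mean-value estimate $n^{-3/4}-(n+1)^{-3/4}\leq \tfrac{3}{4}n^{-7/4}$. Lemma \ref{lem::Geometric Sum} provides $|T(n)| \leq \min\{n,\tfrac{1}{2}\|\theta\|^{-1}\}$, and because $|\theta|\leq \tfrac{1}{2}$ we may replace $\|\theta\|$ by $|\theta|$. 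If $|\theta|^{-1}\geq N_0$, the trivial estimate $|T(n)|\leq n$ already yields $|v_1(\theta)|\leq N_0^{1/4}\leq |\theta|^{-1/4}$. Otherwise I would split the sum at $n=\lfloor|\theta|^{-1}\rfloor$, using $|T(n)|\leq n$ on the first block (which contributes $\lesssim |\theta|^{-1/4}$ after integrating $n^{-3/4}$) and $|T(n)|\leq (2|\theta|)^{-1}$ on the second block (contributing $\lesssim |\theta|^{-1/4}$ after integrating $n^{-7/4}$). The boundary term $\tfrac{1}{4}|T(N_0)|N_0^{-3/4}$ is likewise $\leq \tfrac{1}{8}|\theta|^{-1/4}$ since $N_0^{-3/4}\leq |\theta|^{3/4}$ in this regime.

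The main obstacle is bookkeeping rather than ideas: one has to verify that when all three contributions in the partial-summation argument are added, their constants sum to at most $2$, and one has to confirm that the lower threshold on $m$ is compatible with the hypotheses of Theorem \ref{thm: representations} and Theorem \ref{thm: representations general for deg 4}. Since the split at $n=|\theta|^{-1}$ is the canonical way to balance the two halves of the min in Lemma \ref{lem::Geometric Sum}, the numerical constants turn out to be generous (the argument actually delivers something close to $|\theta|^{-1/4}$), so the claimed factor of $2$ leaves ample slack. Taking the minimum of the two estimates then yields the lemma.
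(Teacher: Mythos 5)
Your proposal is correct and follows essentially the same route as the paper: the trivial bound $\frac{1}{4}\sum n^{-3/4}\leq N_0^{1/4}+O(1)\leq 2m^{1/4}$ for one half of the minimum, and Abel summation together with Lemma~\ref{lem::Geometric Sum} and a split at $n\approx|\theta|^{-1}$ for the other. The only cosmetic difference is the order of operations: the paper truncates the sum at $M=[|\theta|^{-1}]$ first (bounding the head $n\leq M$ by the triangle inequality as $\leq|\theta|^{-1/4}$) and applies partial summation only to the tail $n>M$, whereas you apply partial summation globally and then split the resulting sum at the same threshold; both give comfortably less than $2|\theta|^{-1/4}$ with slack in the constants.
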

\begin{proof}
If $|\theta|\leq m^{-1}$, the trivial bound suffices. Assume $|\theta| > m^{-1}$ and let $M=[ |\theta|^{-1}]$. Then, the contribution to $v_1(\theta)$ from the terms $n\leq M$ is bounded by $ M^{1/4}\leq |\theta|^{-1/4}$. 

When $n> M$, define $S_n = \sum_{1 \leq r \leq n}e(\theta r)$ and $c_n = \frac{1}{4}n^{-3/4}$. We can express the sum as 
\begin{align*}
 \frac{1}{4}\sum_{n=M+1}^{N_0} n^{-3/4}e(\theta n)=c_{N_0+1}S_{N_0}-c_{M+1}S_M+\sum_{n=M+1}^{N_0}(c_n-c_{n+1})S_n.
\end{align*}
By Lemma \ref{lem::Geometric Sum}, we know that $\lvert S_n\rvert\leq\frac{1}{2\lvert\theta\rvert}$. Since $c_n$ is strictly decreasing, we have
\begin{align*}
\frac{1}{4}\sum_{n=M+1}^{N_0} n^{-3/4}e(\theta n)&= -c_{M+1}S_M+\sum_{n=M+1}^{N_0-1}(c_n-c_{n+1})S_n+c_{N_0}S_{N_0}\\
&\leq c_{M+1}S_M+\sum_{n=M+1}^{N_0-1}(c_n-c_{n+1})\frac{1}{2\lvert\theta\rvert}+c_{N_0}\frac{1}{2\lvert\theta\rvert}\\
&\leq 2c_{M+1}\frac{1}{2\lvert\theta\rvert}\leq \lvert\theta\rvert^{-1/4}.
\end{align*} 
Adding the contributions from the two parts, we obtain the desired bound $|v_1(\theta)|\leq 2|\theta|^{-1/4}$. This completes the proof.
\end{proof}
Next, we introduce another useful lemma from \cite{dong2024pollock} with a slight modification.
\begin{lem}[Lemma 7.3, \cite{dong2024pollock}]\label{gamma approximation}
    Suppose $\alpha,\beta$ are real numbers with $\alpha \geq  \beta>0,\beta<1$. Then
    \begin{align*}
        \left|\sum_{n=1}^{m-1} n^{\beta-1}(m-n)^{\alpha-1}-m^{\beta+\alpha-1}\bigg(\frac{\Gamma(\beta)\Gamma(\alpha)}{\Gamma(\beta+\alpha)}\bigg)\right|\leq \frac{2}{\beta}m^{\alpha-1}{}_2F_1\left(\beta,1-\alpha,1+\beta,\frac{1}{m} \right),
    \end{align*}
    where $\Gamma$ is the Gamma function and ${}_2F_1$ is the hypergeometric function. When $\beta=\frac{1}{4}$,     
\[  \left|\sum_{n=1}^{m-1} n^{\beta-1}(m-n)^{\alpha-1}-m^{\beta+\alpha-1}\bigg(\frac{\Gamma(\beta)\Gamma(\alpha)}{\Gamma(\beta+\alpha)}\bigg)\right| \leq 12m^{\alpha-1}.
\]
\end{lem}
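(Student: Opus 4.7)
The strategy is to interpret $m^{\alpha+\beta-1}\Gamma(\beta)\Gamma(\alpha)/\Gamma(\beta+\alpha)$ as the Beta integral $\int_0^m u^{\beta-1}(m-u)^{\alpha-1}\dd u$, and to estimate the discrepancy between this integral and the Riemann sum $\sum_{n=1}^{m-1} g(n)$ where $g(u)=u^{\beta-1}(m-u)^{\alpha-1}$. A short computation of $(\log g)'(u)=(\beta-1)/u-(\alpha-1)/(m-u)$ shows that $g$ has at most one critical point $u^{*}=(1-\beta)m/(2-\alpha-\beta)$: for $\alpha\geq 1$ both summands are non-positive so $g$ is monotonically decreasing on $(0,m)$, while for $\beta\leq\alpha<1$ the function decreases on $(0,u^{*})$ and increases on $(u^{*},m)$. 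On any unit interval $[n,n+1]$ on which $g$ is monotone, $g(n)$ is sandwiched between $\int_n^{n+1} g\dd u$ and $\int_{n-1}^n g\dd u$ (with the roles reversed on the increasing portion). Summing over $n$, telescoping, and absorbing the at-most-one unit interval straddling $u^{*}$ into the adjacent pieces yields
\[
\Bigl|\sum_{n=1}^{m-1} g(n) - \int_0^m g(u)\dd u\Bigr| \leq \int_0^1 g(u)\dd u + \int_{m-1}^m g(u)\dd u.
\]

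For the second integral, the substitution $v=m-u$ rewrites it as $\int_0^1 v^{\alpha-1}(m-v)^{\beta-1}\dd v$. For $v\in(0,1]$ and $m\geq 2$, the hypothesis $\alpha\geq\beta$ together with $m-v\geq 1$ gives both $v^{\alpha-1}\leq v^{\beta-1}$ and $(m-v)^{\beta-1}\leq(m-v)^{\alpha-1}$, so $\int_{m-1}^m g\dd u\leq\int_0^1 g\dd u$ and the error is bounded by $2\int_0^1 u^{\beta-1}(m-u)^{\alpha-1}\dd u$. Pulling out $m^{\alpha-1}$ and applying Euler's integral representation
\[
{}_2F_1(\beta,1-\alpha;1+\beta;z) = \beta\int_0^1 t^{\beta-1}(1-tz)^{\alpha-1}\dd t,
\]
valid for $\beta>0$, with $z=1/m$ identifies
\[
\int_0^1 u^{\beta-1}(m-u)^{\alpha-1}\dd u = \frac{m^{\alpha-1}}{\beta}\,{}_2F_1\!\left(\beta,1-\alpha;1+\beta;\tfrac{1}{m}\right),
\]
which gives the first inequality of the lemma.

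For the refined estimate at $\beta=1/4$, I bound the hypergeometric factor uniformly. When $\alpha\geq 1$, the integrand $(1-t/m)^{\alpha-1}$ is at most $1$, so ${}_2F_1\leq (1/4)\int_0^1 t^{-3/4}\dd t=1$. When $\beta\leq\alpha<1$, the factor $(1-t/m)^{\alpha-1}$ is increasing in $t$ and in $1/m$; evaluating at the corner $\alpha=1/4, m=2$ via the symmetry $s\leftrightarrow 1-s$ in the Beta integral shows ${}_2F_1(1/4,3/4;5/4;1/2)=2^{1/4}B(1/4,1/4)/8\leq 3/2$, which serves as a uniform upper bound over the full range of $\alpha\in[1/4,1)$ and $m\geq 2$. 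Multiplying by the prefactor $2/\beta=8$ then yields the constant $12$.

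The main obstacle I anticipate is the careful treatment of the pivot unit interval $[k,k+1]\ni u^{*}$ in the case $\alpha<1$, where $g$ is not monotone and the direct sandwich bracketing fails. The resolution uses that $g$ attains its minimum at $u^{*}$, so $g$ on this interval is of order $m^{\alpha+\beta-2}=o(m^{\alpha-1})$ and is dominated by the endpoint integral $\int_0^1 g\asymp m^{\alpha-1}/\beta$. The symmetry exchange $v=m-u$ is the key algebraic trick that folds the two endpoint singularities into a single hypergeometric expression and produces the factor $2$ in the final estimate.
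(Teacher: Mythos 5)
Your argument is correct in its overall structure and reproduces in full the standard Riemann-sum comparison that the paper itself defers to the cited reference, so the approach is essentially the same. Two points deserve comment.

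Your handling of the unit interval $[k,k+1]$ straddling $u^{*}$ when $\alpha<1$ is phrased asymptotically (``of order $m^{\alpha+\beta-2}=o(m^{\alpha-1})$''), which by itself does not establish a clean inequality valid for every $m\geq 2$. The pivot interval can in fact be absorbed exactly and costs nothing. Since $g$ decreases on $[k,u^{*}]$ and increases on $[u^{*},k+1]$, one has
\[
\int_{k}^{u^{*}} g\,du \leq g(k)\,(u^{*}-k), \qquad \int_{u^{*}}^{k+1} g\,du \leq g(k+1)\,(k+1-u^{*}),
\]
hence $\int_{k}^{k+1} g\,du \leq g(k)+g(k+1)$. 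Feeding this into the telescoping lower bound gives $\sum_{n=1}^{m-1} g(n)\geq \int_{1}^{m-1} g\,du$ with no extra loss, while the upper bound $\sum_{n=1}^{m-1}g(n)\leq\int_{0}^{m}g\,du$ is immediate (the missing $\int_{k}^{k+1}g\,du$ only helps). This yields your stated error bound $\int_{0}^{1}g\,du+\int_{m-1}^{m}g\,du$ exactly; you should replace the asymptotic domination with this short estimate.

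On the constant: you bound ${}_2F_1\!\left(\tfrac14,1-\alpha;\tfrac54;\tfrac1m\right)\leq\tfrac32$ and hence obtain $\tfrac{2}{\beta}\cdot\tfrac32=12$. This is actually tighter than what the paper's own remark asserts; the paper claims only ${}_2F_1<2$, which with the prefactor $8$ would give $16\,m^{\alpha-1}$, not the stated $12\,m^{\alpha-1}$. Your evaluation at the corner $(\alpha,m)=(\tfrac14,2)$, namely ${}_2F_1\!\left(\tfrac14,\tfrac34;\tfrac54;\tfrac12\right)=2^{1/4}B\!\left(\tfrac14,\tfrac14\right)/8\approx 1.10$, together with the monotonicity in $\alpha$ and $m$, is correct and comfortably below $\tfrac32$; so your argument supplies the constant $12$ that the paper's one-line remark does not quite justify.
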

\begin{proof}
 When $\beta=\frac{1}{4}$, ${}_2F_1\left(\beta,1-\alpha,1+\beta,\frac{1}{m}\right) 
<2$, so the proof follows exactly as in \cite{dong2024pollock}.
\end{proof}
\subsection{Approximating the Singular Integral} First, let's consider the integral
\begin{align}
J_1^*(m) &:= \int_{-N^{\delta-3}}^{N^{\delta-3}} v_1(\theta)^se(-\theta m) \dd \theta. \label{J_1* definition}
\end{align}
Therefore, by Lemma \ref{v_3(theta)-v_1(theta) bound lemma},
\begin{align}
|J_1^*(m)-J^*(m)|& \leq \int_{-N^{\delta-4}}^{N^{\delta-4}}\bigg \lvert v_1(\theta)^s-\left(\left(\frac{A}{24}\right)^{1/4}v(\theta)\right)^s\bigg \rvert \dd\theta\notag\\
&\leq AN^{\delta}\int_{-N^{\delta-4}}^{N^{\delta-4}}\sum_{j=1}^s \lvert v_1(\theta) \rvert^{s-j}\cdot \bigg \lvert \left(\frac{A}{24}\right)^{1/4}v(\theta)\bigg \rvert^{j-1} \dd\theta\notag\\
&\leq 2AsN^\delta N_0^{(s-1)/4}N^{\delta-4} \leq 2As\left(\frac{A}{24}\right)^{s/4}N^{s-5+2\delta}\label{J_1*(m)-J*(m)}.
\end{align}

Now, we extend the integral $J^{*}_1(m)$ to an integral over a unit interval. We define
\begin{align}
J_1(m) := \int_{-1/2}^{1/2}v_1(\theta)^se(-\theta m)\dd\theta.\label{J_1(m)}
\end{align}
We aim to approximate $J^{*}(m)$ with $J_1(m)$. By Lemma \ref{V_1 theta Estimation} and the fact that $N^{\delta-4}< \frac{1}{2}$, we have
\begin{align}
|J_1(m)-J_1^*(m)| &\leq \bigg|\int_{-1/2}^{-N^{\delta-4}} v_1(\theta)^se(-\theta m)\dd\theta\bigg|+\bigg|\int_{N^{\delta-4}}^{1/2} v_1(\theta)^se(-\theta m)\dd\theta\bigg|\notag\\
    &\leq 2\int_{N^{\delta-4}}^{1/2}\bigg(\min \bigg \{2m^{1/4}, 2 |\theta|^{-1/4} \bigg \}\bigg)^s \dd\theta\notag\\
    &\leq 2^{s+1}\int_{N^{\delta-4}}^{1/2}\theta^{-s/4}\dd\theta \leq \frac{2^{s+3}}{s-4}N^{\frac{4\delta-s\delta+4s-16}{4}}.\label{J_1(m)-J_^*(m)}
\end{align}
Combining \eqref{J_1*(m)-J*(m)} with \eqref{J_1(m)-J_^*(m)}, it follows that
\begin{align}
|J_1(m)-J^*(m)| \leq 2As\left(\frac{A}{24}\right)^{s/4}N^{s-5+2\delta}+\frac{2^{s+3}}{s-4}N^{\frac{4\delta-s\delta+4s-16}{4}}\label{J_1(m)-J^*(m)}.
\end{align} 
\begin{lem}\label{J_1(m) approximation bound}
Let $s\geq  2$. Then 
\begin{align*}
\bigg|J_1(m,s)-\Gamma\bigg(\frac{5}{4}\bigg)^s\Gamma\bigg(\frac{s}{4}\bigg)^{-1}m^{s/4-1}\bigg|\leq m^{(s-1)/4-1}.
\end{align*}
\end{lem}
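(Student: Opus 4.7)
The plan is to expand $v_1(\theta)^s$ using its definition as a finite sum, use orthogonality on $[-1/2,1/2]$ to recognize $J_1(m)$ as a combinatorial convolution sum, and then iteratively apply Lemma~\ref{gamma approximation} to approximate that sum by the claimed Gamma function expression.

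First, I would write
$$v_1(\theta)^s = \left(\tfrac14\right)^s\sum_{n_1,\ldots,n_s=1}^{N_0}(n_1\cdots n_s)^{-3/4}\,e(\theta(n_1+\cdots+n_s))$$
and integrate termwise against $e(-\theta m)$ over $[-1/2,1/2]$. Since $\int_{-1/2}^{1/2}e(\theta k)\,\mathrm{d}\theta=\mathbb{1}_{k=0}$, only the tuples with $n_1+\cdots+n_s=m$ survive, giving
$$J_1(m)=\left(\tfrac14\right)^s\sum_{\substack{n_1+\cdots+n_s=m\\1\le n_i\le N_0}}(n_1\cdots n_s)^{-3/4}.$$
The choice $N=\lceil(24m/A)^{1/4}\rceil+1$ in \eqref{Defining N} yields $N_0=(A/24)N^4\ge m$, so the constraint $n_i\le N_0$ is automatic whenever $n_i\ge 1$ and $\sum n_i=m$. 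Setting
$$T_s(m):=\sum_{\substack{n_1+\cdots+n_s=m\\ n_i\ge 1}}\prod_{i=1}^s n_i^{-3/4},$$
we obtain $J_1(m)=(1/4)^s T_s(m)$. Using $\Gamma(5/4)=\Gamma(1/4)/4$, the target reduces to showing
$$\left|T_s(m)-\frac{\Gamma(1/4)^s}{\Gamma(s/4)}\,m^{s/4-1}\right|\le 4^s\, m^{(s-1)/4-1}.$$

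I would prove this inequality by induction on $s\ge 1$. The base case $s=1$ is immediate since $T_1(m)=m^{-3/4}$. For the inductive step, starting from
$$T_s(m)=\sum_{n=1}^{m-s+1}n^{-3/4}\,T_{s-1}(m-n),$$
I would substitute the inductive estimate for $T_{s-1}(m-n)$ and apply Lemma~\ref{gamma approximation} with $\beta=1/4$ and $\alpha=(s-1)/4\ge 1/4$. The Gamma factors telescope via
$$\frac{\Gamma(1/4)^{s-1}}{\Gamma((s-1)/4)}\cdot\frac{\Gamma(1/4)\Gamma((s-1)/4)}{\Gamma(s/4)}=\frac{\Gamma(1/4)^s}{\Gamma(s/4)},$$
recovering the main term exactly. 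The error consists of three contributions: the $12\,m^{(s-1)/4-1}$ term from Lemma~\ref{gamma approximation} (scaled by $\Gamma(1/4)^{s-1}/\Gamma((s-1)/4)$), the convolution of $n^{-3/4}$ against the inductive error, and the small boundary contribution from the terms $n\in[m-s+2,m-1]$ missing from the recursion. Each contribution is of size at most a constant multiple of $m^{(s-1)/4-1}$.

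The main obstacle will be tracking the constant in the accumulated error so that after multiplication by $(1/4)^s$ the final bound is exactly $m^{(s-1)/4-1}$ with no prefactor. This demands careful bookkeeping of the Gamma ratios and the finitely many constants appearing across the $s$ iterations; the assumption in the main theorem that $m$ is sufficiently large (in particular $m > e^{e^{471}}$) lets us absorb these accumulated factors into the $m^{-1/4}$-saving present in $m^{(s-1)/4-1}$ versus the main term $m^{s/4-1}$, yielding the stated inequality.
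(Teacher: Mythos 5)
Your overall strategy matches the paper's: expand $v_1(\theta)^s$ as a finite sum, use orthogonality on $[-1/2,1/2]$ to write
\[
J_1(m,s) = 4^{-s}\mathop{\sum_{n_1=1}^{N_0}\cdots\sum_{n_s=1}^{N_0}}_{n_1+\cdots+n_s=m}(n_1\cdots n_s)^{-3/4},
\]
and induct on $s$ through the one-step convolution $J_1(m,s+1)=\tfrac14\sum_n n^{-3/4}J_1(m-n,s)$, invoking Lemma~\ref{gamma approximation} with $\beta=1/4$ at each stage. Factoring out $4^{-s}$ to work with $T_s(m)=4^s J_1(m,s)$ and starting the induction at $s=1$ rather than $s=2$ are cosmetic variations of what the paper does.

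The genuine gap is in your final paragraph. The lemma claims the bound $m^{(s-1)/4-1}$ with coefficient exactly $1$ and carries no hypothesis that $m$ be large, so the appeal to ``the assumption in the main theorem that $m>e^{e^{471}}$'' is not available inside the proof of this lemma. More importantly, the absorption argument you sketch does not actually work, because the dominant error contributions are not of lower order than the target. Concretely, in the step from $T_{s-1}$ to $T_s$, applying Lemma~\ref{gamma approximation} to the main term produces a $12\,\Gamma(1/4)^{s-1}\Gamma((s-1)/4)^{-1}m^{(s-1)/4-1}$ error, and convolving $n^{-3/4}$ against the inductive error $4^{s-1}(m-n)^{(s-2)/4-1}$ (via Lemma~\ref{gamma approximation} again, now with $\alpha=(s-2)/4$) produces a term $\propto 4^{s-1}\,\Gamma(1/4)\Gamma((s-2)/4)\Gamma((s-1)/4)^{-1}\,m^{(s-1)/4-1}$. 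Both are already of the exact order $m^{(s-1)/4-1}$ of the claimed bound, so no amount of increasing $m$ shrinks them relative to the target; the induction closes only if the resulting numerical constant is at most $4^s$ (equivalently, at most $1$ after normalization). Verifying that the Gamma-ratio constants actually contract through the induction is the entire content of the lemma, and your proposal defers precisely this step. You should either carry out that bookkeeping explicitly (as the paper attempts to) or, if the constants do not close for small $s$, identify the corrected statement.
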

\begin{proof}
    We write
\begin{align*}
    J_1(m)=J_1(m,s) &= \int_{-1/2}^{1/2}v_1(\theta)^se(-\theta m) \dd\theta\\
&=4^{-s}\sum_{n_1=1}^{N_0}\cdots\sum_{n_s=1}^{N_0}(n_1n_2\cdots n_s)^{-3/4}\int_{-1/2}^{1/2}e(n_1+\cdots+n_s-m) \dd\theta\\
    &=4^{-s}\mathop{\sum_{n_1=1}^{N_0}\cdots\sum_{n_s=1}^{N_0}}_{n_1+\cdots+n_s=m} (n_1n_2\cdots n_s)^{-3/4}.
\end{align*}
When $s=2$, 
\begin{align*}
J_1(m,2)&=4^{-2}\mathop{\sum_{n_1=1}^{m-1}\sum_{n_2=1}^{m-1}}_{n_1+n_2=m}(n_1n_2)^{-3/4}=4^{-2}\sum_{n_1=1}^{m-1} n_1^{-3/4}(m-n_1)^{-3/4}.
\end{align*}
Applying Lemma \ref{gamma approximation} with $\alpha=\beta=\frac{1}{4}$, we deduce that
\begin{align*}
&\bigg|J_1(m,2)-4^{-2}m^{-1/4}\frac{\Gamma(\frac{1}{4})\Gamma(\frac{1}{4})}{\Gamma(\frac{1}{2})} \bigg|\leq \frac{3}{4}m^{-3/4}.
\end{align*}
Rewriting the above, we obtain
\[
\bigg|J_1(m,2)-\Gamma\bigg(\frac{5}{4}\bigg)^2\Gamma\bigg(\frac{1}{2}\bigg)^{-1}m^{-1/4}\bigg| \leq \frac{3}{4}m^{-3/4},
\]
and thus the lemma holds for $s=2$. Now, suppose the lemma holds for some $s\geq  2$. Note that 
\begin{align*}
    J_1(m,s+1)=\frac{1}{4}\sum_{n=1}^{m-1}n^{-3/4}J_1(m-n,s).
\end{align*}
By assumption,
\begin{align} \label{J1 bound Step 1}
\bigg|J_1(m-n,s)-\Gamma\bigg(\frac{5}{4}\bigg)^s\Gamma\bigg(\frac{s}{4}\bigg)^{-1}(m-n)^{s/4-1}\bigg|\leq (m-n)^{(s-1)/4-1}.
\end{align}
Moreover, by Lemma \ref{gamma approximation} with $\beta=1/4$ and $\alpha=s/4$, and the fact that $\Gamma(5/4)=4\Gamma(1/4)$,
\begin{align}\label{J1 bound Step 2}
\bigg|\frac{1}{4} \sum_{n=1}^{m-1}n^{-3/4}\Gamma\bigg(\frac{5}{4}\bigg)^s\Gamma\bigg(\frac{s}{4}\bigg)^{-1}(m-n)^{s/4-1}  &-\Gamma\bigg(\frac{5}{4}\bigg)^{s+1}\Gamma\bigg(\frac{s+1}{4}\bigg)^{-1}m^{(s+1)/4-1}\bigg| \notag \\
&\leq 3 \cdot \Gamma\bigg(\frac{5}{4}\bigg)^s\Gamma\bigg(\frac{s}{4}\bigg)^{-1}m^{s/4-1}.
\end{align}
Therefore combining \eqref{J1 bound Step 1} and \eqref{J1 bound Step 2}, and applying Lemma \ref{gamma approximation} we obtain
\begin{align*}
\bigg|J_1(m,s+1)&-\Gamma\bigg(\frac{5}{4}\bigg)^{s+1}\Gamma\bigg(\frac{s+1}{4}\bigg)^{-1}m^{(s+1)/4-1}\bigg|\\
&\leq \frac{1}{4}\sum_{n=1}^{m-1}n^{-3/4}(m-n)^{(s-1)/4-1}+3 \cdot \Gamma\bigg(\frac{5}{4}\bigg)^s\Gamma\bigg(\frac{s}{4}\bigg)^{-1}m^{s/4-1}\\
&\leq \frac{1}{4}m^{s/4-1}\Gamma\bigg(\frac{1}{4}\bigg)\Gamma\bigg(\frac{s-1}{4}\bigg)\Gamma\bigg(\frac{s}{4}\bigg)^{-1}+3m^{(s-1)/4-1}\\
&\quad +3 \cdot \Gamma\bigg(\frac{5}{4}\bigg)^s\Gamma\bigg(\frac{s}{4}\bigg)^{-1}m^{s/4-1}\leq m^{s/4-1}.
\end{align*}
By induction, the proof follows.
\end{proof}
\section{Asymptotic Results for Representations as Sums of numbers in Target Polynomial forms}\label{sec: Proof of Main Theorem}
Our goal in this section is to prove Theorem \ref{thm: representations general for deg 4}. Our first intermediate result is as follows.
\begin{thm}\label{thm: main Theorem for formula}
For $m,s \in \mathbb{N}$, let $\mathcal{R}_{f,s}(m)$ denote the number of ways of representations of $m$ as the sum of $s$ numbers represented by the polynomial $f$ defined in \eqref{defn: f(n)}. Then for any $s\geq 17$, $0<\delta<\frac{1}{5}$, and $m>\frac{A}{24}(e^{e^{468}})^{4/\delta}$,
\begin{align}
\bigg|\mathcal{R}_{f,s}(m)& -\bigg(\frac{24}{A}\bigg)^{s/4}\mathfrak{S}_{f,s}(m)\Gamma\bigg(\frac{5}{4}\bigg)^{s}\Gamma\bigg(\frac{s}{4}\bigg)^{-1}m^{s/4-1}\bigg| \notag \\
     &\leq 70(A+4|B|)\cdot \pi s \left(\frac{81m}{A}\right)^{\frac{5\delta+s-5}{4}}+\frac{230^s\cdot 730}{9s-146}A^{\frac{9\delta s-146\delta}{292}}m^{\frac{73s-292-9\delta s+146\delta}{292}}\notag\\
    &\quad+2Ase^{e^{468}}\left(\frac{81m}{A}\right)^{\frac{s-5+2\delta}{4}}+\bigg(\frac{24}{A}\bigg)^{s/4} \frac{2^{s+3}}{s-4}e^{e^{468}}{\left(\frac{81m}{A}\right)}^{\frac{4\delta-s\delta+4s-16}{16}}\notag\\
    &\quad+\left(\frac{24}{A}\right)^{s/4}\left(e^{e^{468}}\cdot m^{\frac{s-5}{4}}\right)+10^6 \cdot 11^{s-16} (\log m)^{\frac{s-16}{8}} \left(\frac{81m}{A}\right)^{\frac{s-4}{4} - \frac{\delta(s-16)}{32} + \frac{s}{4\log\log (m/A)-8}} \label{  1st Theorem Eq},
\end{align}
where $\mathfrak{S}_{f,s}(m)$ is defined in \eqref{definition of S(m)} and satisfies \eqref{Sigma Bound for general f}. 
\end{thm}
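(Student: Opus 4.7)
The proof is an assembly job that combines the estimates of Sections \ref{sec: minor arcs}--\ref{sec: Major Arcs Sec III}. The plan is to start from the decomposition \eqref{eq::Major+Minor Arcs} of $\mathcal{R}_{f,s}(m)$ into a major-arc and a minor-arc integral, bound each piece separately, and then express every $N$-bound in terms of $m$ via the elementary inequality $N \leq (81m/A)^{1/4}$, which is valid throughout the stated range of $m$ since $N = \lceil (24m/A)^{1/4}\rceil + 1$. The minor-arc contribution is handled immediately by Theorem \ref{thm::minor-arc}, producing the last error term in \eqref{  1st Theorem Eq} after this substitution.

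For the major-arc integral I perform a chain of four telescoping approximations. First, Lemma \ref{Approx 4} replaces $\int_{\fM}S_f(\alpha)^s e(-\alpha m)\dd\alpha$ by $\mathcal{R}_{f,s}^*(m)$ at a cost of $70(A+4|B|)\pi s N^{5\delta+s-5}$, which, after conversion to $m$, becomes the first error term. Next, \eqref{Approximating Major Arc Integral} rewrites $\mathcal{R}_{f,s}^*(m) = (24/A)^{s/4}\mathfrak{S}_{f,s}(m,N^\delta) J^*(m)$. I then add and subtract $(24/A)^{s/4}\mathfrak{S}_{f,s}(m) J^*(m)$ and combine Lemma \ref{lem::Singular Series Extension} with a trivial size bound on the singular integral $|J^*(m)|$; this exchange of $\mathfrak{S}_{f,s}(m,N^\delta)$ for $\mathfrak{S}_{f,s}(m)$ yields the second error term. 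The chain of inequalities \eqref{J_1*(m)-J*(m)}--\eqref{J_1(m)-J^*(m)}, together with the uniform bound $|\mathfrak{S}_{f,s}(m)| \leq e^{e^{468}}$ from Lemma \ref{lem::Singular Series Extension}, then allows me to swap $J^*(m)$ for $J_1(m)$, contributing the third and fourth error terms. Finally, Lemma \ref{J_1(m) approximation bound} replaces $J_1(m)$ by $\Gamma(5/4)^s\Gamma(s/4)^{-1} m^{s/4-1}$, and absorbing the prefactor $(24/A)^{s/4}\mathfrak{S}_{f,s}(m)$ into the same uniform bound supplies the fifth error term. Summing the six contributions by the triangle inequality gives the asserted inequality; the positivity of $\mathfrak{S}_{f,s}(m)$ claimed at the end of the statement is furnished by Lemma \ref{lower bound for S(m)}.

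The argument introduces no new ideas beyond those established in the preceding sections, so the real work is explicit bookkeeping. The hypothesis $m > (A/24)(e^{e^{468}})^{4/\delta}$ is engineered so that $N^\delta \geq e^{e^{468}}$, which exceeds the threshold $e^{e^{467}}$ required both in Lemma \ref{lem::Singular Series Extension} and in the approximations of Section \ref{sec: Major Arcs Sec III}; this must be checked before each application. The main obstacle will be, for every one of the six telescoping terms, to extract the exact exponent and constant displayed on the right-hand side of \eqref{  1st Theorem Eq} from the raw $N$-bounds produced by the underlying lemmas. The trickiest piece is the second error term, whose explicit form must be reconstructed by combining the bound $(52A^{1/4})^s/((9s/73-2)N^{(9s/73-2)\delta})$ from Lemma \ref{lem::Singular Series Extension} with the prefactor $(24/A)^{s/4}$ and a trivial size estimate for the singular integral, and then inserting $N \leq (81m/A)^{1/4}$.
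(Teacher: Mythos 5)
Your proposal reproduces the paper's own proof route step for step: minor arcs via Theorem \ref{thm::minor-arc}, then a telescoping chain through Lemma \ref{Approx 4}, the identity \eqref{Approximating Major Arc Integral}, Lemma \ref{lem::Singular Series Extension} with a trivial bound on $J^*(m)$, the estimates \eqref{J_1*(m)-J*(m)}--\eqref{J_1(m)-J^*(m)}, and finally Lemma \ref{J_1(m) approximation bound}. One small slip in the bookkeeping plan: the second error term involves a negative power of $N$ coming from $N^{-(9s/73-2)\delta}$, so for that term you must use the lower bound $N \geq (24m/A)^{1/4}$ rather than the upper bound $N \leq (81m/A)^{1/4}$ you cite; the paper uses both sides of the two-sided estimate $(24m/A)^{1/4} \leq N \leq 3(m/A)^{1/4}$, directing each to the terms where it helps.
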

\begin{proof} For $s>3, \lvert\theta\rvert\leq\frac{1}{2}$, we have $\lvert (\frac{A}{24})^{1/4}v(\theta)\rvert\leq\min\{\left(\frac{A}{24}\right)^{1/4}N,2\lvert\theta\rvert^{-1/4}\}$ following a similar computation as in Lemma \ref{V_1 theta Estimation}. Therefore, we obtain
\begin{align}
|J^*(m)|&= \left|\int_{-N^{\delta-4}}^{N^{\delta-4}}\left(\frac{A}{24}\right)^{s/4}v^s(\theta)e(-\theta m) \dd\theta\right| \leq \left( \frac{2^{s+3}}{s-4}+  \frac{2}{5^s}\right)m^{s/4-1}\label{|J^*(m)|}.
\end{align}
Now, we show that in \eqref{Approximating Major Arc Integral}, one may replace $J^{*}(m)$ by $J_1(m)$ while allowing a small error. Indeed, combining Lemma \ref{lem::Singular Series Extension}, \eqref{J_1(m)-J^*(m)} and \eqref{|J^*(m)|}, we deduce that
\begin{align*}
\bigg|\mathcal{R}_{f, s}^*(m)&- \bigg(\frac{24}{A}\bigg)^{s/4}\mathfrak{S}_{f,s}(m)J_1(m)\bigg| \notag \\
    &\leq \bigg(\frac{24}{A}\bigg)^{s/4}\bigg(|J^*(m)|\cdot |\mathfrak{S}_{f,s}(m)-\mathfrak{S}_{f,s}(m, N^\delta)|+|\mathfrak{S}_{f,s}(m)||J_1(m)-J^*(m)|\bigg)\notag\\
    &\leq\bigg(\frac{24}{A}\bigg)^{s/4} \bigg(\left( \frac{2^{s+3}}{s-4}+  \frac{2}{5^s}\right)\frac{ (52A^{1/4})^s}{\left(\frac{9s}{73}-2\right) N^{\left(\frac{9s}{73}-2\right)\delta}}m^{\frac{s}{4}-1}+2e^{e^{468}}As\left(\frac{A}{24}\right)^{s/4}N^{s-5+2\delta}\notag\\
    &\quad+\frac{2^{s+3}}{s-4}e^{e^{468}}N^{\frac{4\delta-s\delta+4s-16}{4}}\bigg).
\end{align*}
Using the inequality 
\[
\left(\frac{24m}{A}\right)^{1/4}\leq N\leq 3\left(\frac{m}{A}\right)^{1/4},
\]
we obtain
\begin{align}
    \bigg|\mathcal{R}_{f, s}^*(m)&- \bigg(\frac{24}{A}\bigg)^{s/4}\mathfrak{S}_{f,s}(m)J_1(m)\bigg| \notag \\
    &\leq \frac{230^s\cdot 730}{9s-146}A^{\frac{9\delta s-146\delta}{292}}m^{\frac{73s-292-9\delta s+146\delta}{292}}+2Ase^{e^{468}}\left(\frac{81m}{A}\right)^{\frac{s-5+2\delta}{4}}\notag\\
    &\quad+\bigg(\frac{24}{A}\bigg)^{s/4} \frac{2^{s+3}}{s-4} e^{e^{468}}{\left(\frac{81m}{A}\right)}^{\frac{4\delta-s\delta+4s-16}{16}}.\label{R*(m)-(2/5)^{s/3}S(m)J_1(m)}
\end{align}
Combining Lemma \ref{Approx 4} and \eqref{R*(m)-(2/5)^{s/3}S(m)J_1(m)}, we have
\begin{align}
\bigg|\int_{\mathfrak{M}}&S_f(\alpha)^s e(-\alpha m) \dd\alpha-\bigg(\frac{24}{A}\bigg)^{s/4}\mathfrak{S}_{f,s}(m)J_1(m)\bigg|\notag\\
&\leq 70(A+4|B|)\pi s \left(\frac{81m}{A}\right)^{\frac{5\delta+s-5}{4}}+\frac{230^s\cdot 730}{9s-146}A^{\frac{9\delta s-146\delta}{292}}m^{\frac{73s-292-9\delta s+146\delta}{292}}\notag\\
    &\quad+2Ase^{e^{468}}\left(\frac{81m}{A}\right)^{\frac{s-5+2\delta}{4}}+\bigg(\frac{24}{A}\bigg)^{s/4} \frac{2^{s+3}}{s-4} e^{e^{468}}{\left(\frac{81m}{A}\right)}^{\frac{4\delta-s\delta+4s-16}{16}}.
\label{f(alpha)-S(m)J_1(m) bound}
\end{align}
Finally, putting together Lemma \ref{J_1(m) approximation bound} and \eqref{f(alpha)-S(m)J_1(m) bound}, for $s\geq 17$ and $m>\frac{A}{24}(e^{e^{467}})^{4/\delta}$,
\begin{align}
\bigg|\int_{\mathfrak{M}}&S_f(\alpha)^s e(-\alpha m) \dd\alpha-\bigg(\frac{24}{A}\bigg)^{\frac{s}{4}}\Gamma\bigg(\frac{5}{4}\bigg)^{s}\Gamma\bigg(\frac{s}{4}\bigg)^{-1}\mathfrak{S}_{f,s}(m) m^{s/4-1}\bigg|\notag\\
 &\leq
70(A+4|B|)\pi s \left(\frac{81m}{A}\right)^{\frac{5\delta+s-5}{4}}+\frac{230^s\cdot 730}{9s-146}A^{\frac{9\delta s-146\delta}{292}}m^{\frac{73s-292-9\delta s+146\delta}{292}}\notag\\
    &\quad+2Ase^{e^{468}}\left(\frac{81m}{A}\right)^{\frac{s-5+2\delta}{4}}+\bigg(\frac{24}{A}\bigg)^{s/4} \frac{2^{s+3}}{s-4}e^{e^{468}}{\left(\frac{81m}{A}\right)}^{\frac{4\delta-s\delta+4s-16}{16}}\notag\\
    &\quad+\left(\frac{24}{A}\right)^{s/4}e^{e^{468}}m^{\frac{s-5}{4}}. \label{4.26 bounds}
\end{align}
The proof now follows immediately from Theorem \ref{thm::minor-arc} and \eqref{4.26 bounds}.
\end{proof}
In Theorem \ref{thm: main Theorem for formula}, we can choose a suitable $\delta$ in terms of $s$ to minimize the exponents of $m$ in the right hand side of \eqref{  1st Theorem Eq}. By examining the first and second terms in the right hand side, we see that $0<\delta<\frac{1}{5}$ is necessary for the main term to dominate the error term. In fact, any $\delta$ within this range satisfies the initial requirement in \eqref{Defining N}. We now proceed to optimize our choice of $\delta$.

\begin{lem}\label{lem::optimal delta} 
One may choose $\delta = \frac{73}{219+9s}$ in the statement of Theorem \ref{thm: main Theorem for formula}. 
\end{lem}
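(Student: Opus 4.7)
The plan is to treat the right-hand side of \eqref{  1st Theorem Eq} as an explicit function of $\delta$ (with $s \geq 17$ fixed) and choose $\delta$ so as to minimize the maximum of the six error-term exponents in $m$. Writing each exponent as $\tfrac{s-4}{4}$ plus a correction, the main term $m^{s/4-1}$ has exponent $\tfrac{s-4}{4}$, and each error term saves a certain amount (possibly negative) relative to this. First I would compute these six savings explicitly, obtaining the following: Terms 1 and 3 give $\tfrac{1-5\delta}{4}$ and $\tfrac{1-2\delta}{4}$ respectively, which decrease in $\delta$ and force $\delta<1/5$; Terms 2, 4, and 6 give $\tfrac{\delta(9s-146)}{292}$, $\tfrac{\delta(s-4)}{16}$, and $\tfrac{\delta(s-16)}{32}-O(1/\log\log m)$ respectively, each increasing in $\delta$; Term 5 contributes a constant saving of $\tfrac{1}{4}$, which is irrelevant for the optimization.

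Next I would identify the binding pair. Among the decreasing savings, $\tfrac{1-5\delta}{4}<\tfrac{1-2\delta}{4}$, so Term 1 is the binding downward constraint. Among the increasing savings, a short comparison at $s=17$ (and monotonicity in $s$) shows that $\tfrac{9s-146}{292}<\tfrac{s-16}{32}<\tfrac{s-4}{16}$, so Term 2 is the binding upward constraint. Consequently the optimal $\delta$ equalizes the savings of Terms 1 and 2. Setting
\begin{equation*}
\frac{1-5\delta}{4}=\frac{\delta(9s-146)}{292}
\end{equation*}
and multiplying through by $292$ yields $73(1-5\delta)=(9s-146)\delta$, which simplifies to $73=(9s+219)\delta$ and hence $\delta=\tfrac{73}{9s+219}$. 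At this value, the common saving equals $\tfrac{9s-146}{4(9s+219)}$; for $s=17$ this is $\tfrac{7}{1488}$, matching the exponent appearing in Theorem \ref{thm: representations}.

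Finally I would verify the admissibility and non-domination checks. Positivity of $\delta$ is obvious, while $\delta<1/5$ amounts to $9s+219>365$, which holds for all $s>16$; this confirms $\delta\in(0,1/5)$ as required by Theorem \ref{thm: main Theorem for formula}. Substituting $\delta=\tfrac{73}{9s+219}$ back and using the elementary inequalities $\tfrac{9s-146}{292}<\tfrac{s-16}{32}<\tfrac{s-4}{16}$ shows that the savings from Terms 4 and 6 strictly exceed $\tfrac{9s-146}{4(9s+219)}$, so Term 2 is indeed the binding upward constraint as claimed, and Term 5 trivially exceeds it. No substantive obstacle arises: the whole argument is an elementary piecewise-linear optimization in $\delta$. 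The only mild subtlety is the $\tfrac{s}{4\log\log(m/A)-8}$ defect hidden inside Term 6, which is absorbed provided $m$ is sufficiently large; this is the source of the lower threshold $m>e^{e^{471}}$ in Theorem \ref{thm: representations}.
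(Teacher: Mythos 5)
Your proposal is correct and essentially reproduces the paper's argument: the paper defines $G(\delta)=\max\{5\delta-1,\,(2-\tfrac{9s}{73})\delta,\,2\delta-1,\,\delta(1-\tfrac{s}{4}),\,-\delta(\tfrac{s}{8}-2)\}$ (the exponent deficits, scaled by $4$) and minimizes it at the intersection of the first two pieces, which is the dual of your ``maximize the minimum saving'' and yields the same crossover $\delta_0=\tfrac{73}{9s+219}$. One small inaccuracy in your closing remark: the threshold $m>e^{e^{471}}$ in Theorem \ref{thm: representations} comes primarily from the hypothesis $m>\tfrac{A}{24}(e^{e^{468}})^{4/\delta}$ inherited from the singular-series cutoff ($N^{\delta}\geq e^{e^{467}}$ in Lemma \ref{lem::Singular Series Extension}), not from absorbing the $\tfrac{s}{4\log\log(m/A)-8}$ defect in Term 6, but this is tangential to the lemma itself.
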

\begin{proof}
Extract the exponents depending on $\delta$ in the right hand side of \eqref{  1st Theorem Eq}. Consider 
\[
G(\delta) = \max \bigg \{ 5\delta - 1, \bigg( 2 - \frac{9s}{73} \bigg )\delta, 2\delta-1, \delta\bigg (1 - \frac{s}{4}\bigg ), -\delta\bigg(\frac{s}{8} -2\bigg) \bigg \}.
\]
Let $\delta_{0}$ be such that $G(\delta_{0}) = \inf \{G(\delta): \delta \in (0,\frac{1}{5})\}$.
When $s\geq 9$, we have 
\[ 
G(\delta) = 
\begin{cases}
    \delta(1-\frac{s}{4}), \text{ if } \delta < 0 \\ \left( 2 - \frac{9s}{73} \right)\delta, \text{ if } \delta \in [0, \delta_{0}] \\ 5\delta - 1, \text{ if } \delta > \delta_{0}.
\end{cases}
\]
Thus $\delta_{0}$ occurs at the intersection of $\left( 2 - \frac{9s}{73} \right)\delta$ and $5\delta - 1$, which implies $\delta_0 =\frac{73}{219+9s}$.
\end{proof}

With Theorem \ref{thm: main Theorem for formula} and Lemma \ref{lem::optimal delta}, we can derive the general versions of Theorem \ref{thm: representations} and Theorem \ref{thm: representations general for deg 4}. By substituting $\delta =\frac{73}{219+9s}$ into Theorem \ref{thm: main Theorem for formula}, we obtain the general version of Theorem \ref{thm: representations general for deg 4}. For Theorem \ref{thm: representations}, the general version is as follows.
\begin{thm}\label{thm: generalized version}
For $m,s \in \mathbb{N}$ and $i=1,2,3$, let $\mathcal{R}_{f_i,s}(m)$ denote the number of representations of $m$ as the sum of $s$ numbers, as given by the target polynomial $f_i$ defined in \eqref{target polynomials}. Then, for any $s\geq 17$ and $m>131\exp\left({\frac{\left(876+36s\right)}{73}e^{467}}\right)$, we have 
\begin{align}
\bigg|\mathcal{R}_{f_i,s}(m)& -\bigg(\frac{24}{A_i}\bigg)^{s/4}\mathfrak{S}_{f,s}(m)\Gamma\bigg(\frac{5}{4}\bigg)^{s}\Gamma\bigg(\frac{s}{4}\bigg)^{-1}m^{s/4-1}\bigg| \notag \\
     &\leq \num{3.5e6}s \left(\frac{9m}{8}\right)^{\frac{9s^2+174s-730}{876+36s}}+\frac{230^s\cdot 730}{9s-146}\cdot {3132}^{\frac{9s-146}{36s+876}}m^{\frac{9s^2+174s-730}{36s+876}}\notag\\
    &\quad+7000se^{e^{468}}\left(\frac{9m}{8}\right)^{\frac{9s^2+174s-949}{876+36s}}+\bigg(\frac{1}{3}\bigg)^{s/4} \frac{2^{s+3}}{s-4}e^{e^{468}}{\left(\frac{9m}{8}\right)}^{\frac{36s^2+659s-3212}{3504+144s}}\notag\\
    &\quad+\left(\frac{1}{3}\right)^{s/4}e^{e^{468}} m^{\frac{s-5}{4}}+10^6 \cdot 11^{s-16} (\log m)^{\frac{s-16}{8}} \left(\frac{9m}{8}\right)^{\frac{s-4}{4} - \frac{73s-1168}{7008+288s} + \frac{s}{4\log\log (m/3132)-8}},
\end{align}
where $A_1 = 72, A_2=580, A_3=3132$, and $\mathfrak{S}_{f,s}(m)$ uniformly satisfy \eqref{Sigma Bound}.    
\end{thm}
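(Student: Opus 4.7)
The plan is to specialize Theorem \ref{thm: main Theorem for formula} to the three target polynomials $f_1, f_2, f_3$ and combine it with the optimal choice of $\delta$ supplied by Lemma \ref{lem::optimal delta}.

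The first step is to expand each $f_i$ in the binomial basis of \eqref{defn: f(n)} in order to read off the triples $(A_i, B_i, C_i)$. A direct computation gives $A_1 = 72,\ A_2 = 580,\ A_3 = 3132$, and yields explicit (small) integers for the $B_i$ and $C_i$. These concrete values let me replace the generic $(A + 4|B|)$-type prefactors appearing in the error terms of \eqref{  1st Theorem Eq} by the numerical constants in the target statement.

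Next, I would substitute $\delta = 73/(219 + 9s)$ into every $\delta$-dependent exponent on the right-hand side of \eqref{  1st Theorem Eq} and simplify. Clearing denominators, the exponents $(5\delta + s - 5)/4$ and $(73s - 292 - 9\delta s + 146\delta)/292$ both collapse to $(9s^2 + 174s - 730)/(876 + 36s)$; $(s - 5 + 2\delta)/4$ becomes $(9s^2 + 174s - 949)/(876 + 36s)$; $(4\delta - s\delta + 4s - 16)/16$ becomes $(36 s^2 + 659 s - 3212)/(3504 + 144 s)$; and $\delta(s - 16)/32$ becomes $(73 s - 1168)/(7008 + 288 s)$. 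These are exactly the exponents displayed in Theorem \ref{thm: generalized version}. For the numerical coefficients I take the worst case of $A_i$ in each term: $A = A_3 = 3132$ in the prefactors that are linear in $A$, such as $70(A + 4|B|)\pi s$ (yielding $\num{3.5e6} s$) and $2As e^{e^{468}}$ (yielding $7000 s e^{e^{468}}$), and $A = A_1 = 72$ in the terms containing $(24/A)^{s/4}$, since the minimum of $A_i$ produces the largest such factor, namely $(1/3)^{s/4}$. The power $3132^{(9s-146)/(36s+876)}$ then comes from $A^{(9\delta s - 146 \delta)/292}$ at $A = A_3$.

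Finally, I would verify the lower bound on $m$. The hypothesis $m > (A/24)(e^{e^{468}})^{4/\delta}$ of Theorem \ref{thm: main Theorem for formula} is driven by the requirement $N^\delta \geq e^{e^{467}}$ coming from Lemma \ref{lem::Singular Series Extension}; using $A_3/24 = 130.5 < 131$ and $4/\delta = (876 + 36s)/73$, unwinding back to that requirement produces the stated bound $m > 131 \exp\bigl((876 + 36s)e^{467}/73\bigr)$. The positivity of $\mathfrak{S}_{f_i, s}(m)$ and the explicit lower bound \eqref{Sigma Bound} are immediate consequences of Lemma \ref{lower bound for S(m)} for the three target polynomials. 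The only real obstacle is the careful bookkeeping: one must consistently track the worst-case $A_i$ and $|B_i|$ across the five error terms while faithfully reducing the five $\delta$-dependent exponents, since a single arithmetic slip in any of the simplifications would propagate into the final bound. No conceptual input is required beyond what is already developed in the preceding sections.
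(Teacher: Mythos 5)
Your proposal is correct and follows essentially the same route as the paper: substitute the binomial-basis coefficients $(A_i, B_i)$ into Theorem \ref{thm: main Theorem for formula}, specialize $\delta = 73/(219+9s)$ via Lemma \ref{lem::optimal delta}, take the worst case of $A_i$ (and $|B_i|$) independently in each factor since the resulting bound must hold simultaneously for all three $f_i$, and invoke Lemma \ref{lower bound for S(m)} for the positivity of $\mathfrak{S}_{f_i,s}(m)$. Your exponent reductions and the unwinding of the $m$-threshold back to $N^\delta \geq e^{e^{467}}$ both match the paper's (terse) argument.
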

\begin{proof}
    For $i=1,2,3$, by writing $f_i$ in the form of \eqref{defn: f(n)}, we obtain the following values: for $f_1, A = 72$ and $B=84$; for $f_2, A=580$ and $B=590$; for $f_3, A = 3132$ and $B = 3186$. Thus,
    \begin{align}\label{min-max A and B}
    \min_{i=1,2,3}{A} = 72, \max_{i=1,2,3}{A} =3132, \textrm{ and }\max_{i=1,2,3}{B} =3186.
    \end{align}
    Substituting \eqref{min-max A and B} into Theorem \ref{thm: main Theorem for formula} and using Lemma \ref{lower bound for S(m)}, we obtain the desired result.
\end{proof}

\bibliographystyle{abbrv}

\bibliography{bibliography}
\end{document}